\documentclass[a4paper]{cas-sc}

\usepackage[numbers]{natbib}
\usepackage{subcaption}
\def\tsc#1{\csdef{#1}{\textsc{\lowercase{#1}}\xspace}}
\tsc{WGM}
\tsc{QE}

\newtheorem{theorem}{Theorem}
\newtheorem{lemma}[theorem]{Lemma}
\newdefinition{rmk}{Remark}
\newproof{pf}{Proof}
\newproof{pot}{Proof of Theorem \ref{thm}}
\newtheorem{corollary}[theorem]{Corollary}
\newtheorem{proposition}[theorem]{Proposition}
\newtheorem{definition}[theorem]{Definition}
\newtheorem{example}{Example}

\newcommand{\C}{\mathbb C}
\newcommand{\R}{\mathbb R}

\newcommand{\A}{\mathcal A}

\newcommand{\Rr}{\mathcal R}
\newcommand{\Ll}{\mathcal L}

\newcommand{\Ss}{\mathcal{S}}

\renewcommand{\leq}{\leqslant}
\renewcommand{\geq}{\geqslant}

\DeclareMathOperator{\dom}{dom}
\DeclareMathOperator{\mul}{mul}
\DeclareMathOperator{\ran}{ran}

\DeclareMathOperator{\ap}{ap}

\DeclareMathOperator{\Span}{span}

\DeclareMathOperator{\diag}{diag}

\begin{document}
\let\WriteBookmarks\relax
\def\floatpagepagefraction{1}
\def\textpagefraction{.001}

\shorttitle{Block numerical ranges for linear relations}    

\shortauthors{}  

\title [mode = title]{Block numerical ranges for linear relations}  

\tnotemark[1] 

\tnotetext[1]{} 

%

\author[1]{Rabeb~Aydi}


\ead{rabeb.aydi.2019@gmail.com}


\credit{}

\affiliation[1]{organization={Department of Mathematics, University of Sfax},
            addressline={BP 1171}, 
            city={Sfax},
            postcode={3000}, 
            country={Tunisia}}

\author[2]{Hannes~Gernandt}[orcid=0000-0001-7364-4606]


\ead{gernandt@uni-wuppertal.de}


\credit{}

\affiliation[2]{organization={University of Wuppertal}, 
            addressline={Gau\ss stra\ss e 20}, 
            city={Wuppertal},
            postcode={42119}, 
            country={Germany}}
\author[1]
{Omaima~Kchaou}[orcid=0000-0001-7681-3591]
\ead{kchaouomaima53@gmail.com}
\credit{}

\author[3]{Carsten~Trunk}[orcid=0000-0003-3023-5567]

\cormark[1]
\ead{carsten.trunk@tu-ilmenau.de}
\credit{}

\affiliation[3]{organization={Institut für Mathematik, Technische Universität Ilmenau},
            addressline={Postfach 10 05 65}, 
            city={Ilmenau},
            postcode={98684}, 
            country={Germany}}
\author[3]{Mitsuru~Wilson}[orcid=0009-0003-7955-1218]
\ead{mitsuru14111@gmail.com}
\credit{}
\cortext[4]{Corresponding author}



\begin{abstract}
Linear relations are multivalued generalizations of linear operators whose spectral properties are closely linked to the stability of differential-algebraic equations. Consequently, reliable spectral enclosures are of considerable interest. In this note, we extend the established concept of the block numerical range from operators to linear relations by means of a range representation. Unlike the classical numerical range, the resulting block numerical range may be nonconvex and can therefore detect gaps in the spectrum. We establish fundamental properties of this new notion and derive corresponding spectral enclosure results. Finally, we illustrate the theory with a numerical example comparing the block numerical ranges obtained from different constructions. As an application, we obtain an explicit left-half-plane spectral bound for a class of port-Hamiltonian DAEs. 
\end{abstract}

\begin{highlights}
\item Block numerical ranges are extended from operators to linear relations.
\item Spectral enclosures are obtained via block numerical ranges in range representation.

\item Resolvent block ranges give unconditional spectral enclosures.

\item A port-Hamiltonian DAE application yields explicit stability half-plane bounds.
\end{highlights}
\begin{keywords}
 \sep Linear relations \sep  Block operator matrices \sep Spectral enclosure
\end{keywords}
\maketitle
\section{Introduction}
Linear relations constitute a natural generalization of linear operators:
they contain graphs of single-valued operators as a special case, while also
allowing for multivalued behavior; see, for example,
\cite{Ammar2025,zbMATH06521173,zbMATH03447605,zbMATH03481315,
cross1998multivalued, zbMATH08092755, zbMATH03953636, von1951functional}.
Their systematic study goes back to Arens and Calkin \cite{arens1961operational, zbMATH02511052}. Since then, linear relations have
become an important tool in spectral and operator theory; we refer to
\cite{gernandt2020invariance} for a recent overview.

A particularly important feature of linear relations is their close
connection with operator pencils. If a linear relation in a Hilbert space $X$ admits a range
representation
\begin{align}
\label{linear relation: range representation}
    \Ll
    =
    \ran
    \begin{bmatrix}
        \mathcal B\\
        \mathcal A
    \end{bmatrix}
    =
    \bigl\{
        (\mathcal Bz,\mathcal Az):z\in X
    \bigr\},
\end{align}
for bounded linear operators $\mathcal A$ and $\mathcal B$ in $X$  cf. \cite{zbMATH07170295, zbMATH06521173, zbMATH08092755}, 
then its finite spectral points are closely related to the spectral values of
the pencil $\lambda\mathcal B-\mathcal A$, see, for example, \cite{gernandt2021spectrum}. 
This connection is especially
relevant for differential-algebraic equations, whose stability and transient
behavior are governed by the spectral and pseudospectral properties of
associated operator or matrix pencils. In this context, we mention the
pseudospectral approach of Embree and Keeler
\cite{EmbreeKeeler17}. As in the operator case, however, the spectrum of a
linear relation or operator pencil is generally difficult to determine
explicitly. Spectral enclosures are therefore of considerable practical and
theoretical importance.

A classical tool for obtaining such enclosures is the numerical range. For a
linear relation $\Ll$ in a Hilbert space $X$, its numerical range is defined
by
\begin{equation}
\label{Numerical range_Linear relation}
    W(\Ll)
    :=
    \bigl\{
        \langle y,x\rangle:
        (x,y)\in\Ll,\ \|x\|=1
    \bigr\}.
\end{equation}
If $\Ll$ is given by the range representation
\eqref{linear relation: range representation}, this can equivalently be written as
\begin{equation*}
    W(\Ll)
    =
    W(\mathcal B,\mathcal A)
    :=
    \bigl\{
        \langle \mathcal Az,\mathcal Bz\rangle:
        z\in X,\ \|\mathcal Bz\|=1
    \bigr\}.
\end{equation*}
The numerical range of a linear relation is convex; see
\cite{Amel73,article,rofe1990numerical}. Moreover, it provides the spectral
inclusions
\[
    \sigma_p(\Ll)\subseteq W(\Ll),
    \qquad
    \sigma_{\mathrm{ap}}(\Ll)
    \subseteq
    \overline{W(\Ll)};
\]
where $\sigma_p(\Ll)$ and  $\sigma_{\ap}(\Ll)$ denote the point spectrum and the approximate point spectrum of the linear relation $\Ll$, respectively. Further recent work on
numerical ranges and pseudospectral questions for linear relations can be
found in \cite{Ammar2025}.

Despite these useful properties, the classical numerical range may give a
rather coarse spectral enclosure. Its convexity prevents it, for instance,
from detecting spectral gaps or separating distinct spectral components.
This limitation has motivated the development of refined, generally
nonconvex numerical range concepts in the operator setting.

One such refinement is the \emph{quadratic numerical range}, introduced for
$2\times2$ block operator matrices in \cite{langer1998spectral} and further
studied in \cite{langer2001new,tretter2008spectral}. Unlike the classical
numerical range, the quadratic numerical range is generally nonconvex. It is
contained in the classical numerical range and, under the corresponding
spectral-enclosure hypotheses, provides substantially sharper information.
In particular, it may detect spectral gaps and can remain bounded in
situations in which the classical numerical range is unbounded. Numerical
methods for computing quadratic numerical ranges have also been investigated;
see, for example, \cite{Jacob2025}.

The quadratic numerical range was subsequently generalized to the
\emph{block numerical range}. Let
\[
    X=X_1\oplus\cdots\oplus X_n
\]
be an orthogonal decomposition of a Hilbert space and let $\mathcal A$ admit
the corresponding block operator representation
\begin{equation*}
    \mathcal A
    =
    \begin{bmatrix}
        A_{11} & \cdots & A_{1n}\\
        \vdots &        & \vdots\\
        A_{n1} & \cdots & A_{nn}
    \end{bmatrix}.
\end{equation*}
Let $\mathcal{S}_{X_i}$ denote the unit sphere in $X_i$, $i=1,\ldots,n.$ For $f=(f_1,\ldots,f_n)
    \in
    \mathcal S_{X_1}\times\cdots\times\mathcal S_{X_n}$, define the isometry
\[
    F_f:\mathbb C^n\longrightarrow X,
    \qquad
    F_fc=\sum_{j=1}^n c_jf_j,
\]
and the compressed matrix
$\mathcal A_f
    :=
    F_f^*\mathcal AF_f
    =
    \bigl(
        \langle A_{ij}f_j,f_i\rangle
    \bigr)_{i,j=1}^n$. The block numerical range \cite[Definition 2.1]{TretW03} is then
\begin{equation}
\label{eq:block_range_matrix}
    W^n(\mathcal A)
    :=
    \bigcup_{f\in
    \mathcal S_{X_1}\times\cdots\times\mathcal S_{X_n}}
    \sigma(\mathcal A_f),
\end{equation}
where $\sigma(\mathcal A_f)$ is the spectrum of $\mathcal A_f$, i.e., the set of all eigenvalues of the matrix $\mathcal A_f$. For $n=2$, this reduces to the quadratic numerical range. The block
numerical range was introduced for bounded block operator matrices in
\cite{tretter2008spectral, TretW03} and its main property is that it is a superset of the spectrum $\sigma(\A)$ and the point spectrum $\sigma_p(\A)$, i.e.\ 
\begin{equation}\label{5a}
\sigma_p(\mathcal A)\subseteq W^n(\mathcal A), \qquad \sigma(\mathcal A)\subseteq \overline{W^n(\mathcal A)}  
\end{equation}
but a subset of the numerical range of the operator $\A,$ 
\[
    W^n(\mathcal A)\subseteq W(\mathcal A),
\]
Subsequently, the block numerical range was extended to various classes of unbounded block operator matrices in 
\cite{rasulov2018spectral, tretter2009spectral}. In general, $W^n(\mathcal A)$ need not be convex. This makes block numerical ranges
capable of providing sharper spectral enclosures and of detecting spectral
gaps that are invisible to the ordinary numerical range. Related refined
numerical range and pseudospectral concepts for operator functions and their
discretizations have been considered in
\cite{Frommer2021, Gerhat2022}.

While the ordinary numerical range has been studied extensively for linear
relations, corresponding quadratic and block numerical range \cite{ rasulov2018spectral, tretter2008spectral, tretter2009spectral, TretW03} concepts have,
to the best of our knowledge, not yet been developed systematically. The
purpose of the present paper is to close this gap.

Our construction is based on a range representation
\eqref{linear relation: range representation}. For the same orthogonal decomposition
$X=X_1\oplus\cdots\oplus X_n$ and
$f\in\mathcal S_{X_1}\times\cdots\times\mathcal S_{X_n}$, we define
\[
    \mathcal A_f=F_f^*\mathcal AF_f,
    \qquad
    \mathcal B_f=F_f^*\mathcal BF_f,
\]
and associate with these matrices the truncated linear relation
\[
    \Ll_f
    :=
    \ran
    \begin{bmatrix}
        \mathcal B_f\\
        \mathcal A_f
    \end{bmatrix}
    \subseteq
    \mathbb C^n\times\mathbb C^n.
\]
The corresponding \emph{range-representation block numerical range} is
defined by
\begin{equation}
\label{eq:intro-block-range-relation}
    W^n(\mathcal B,\mathcal A)
    :=
    \bigcup_{\substack{
        f\in
        \mathcal S_{X_1}\times\cdots\times\mathcal S_{X_n}\\
        \rho(\Ll_f)\neq\emptyset
    }}
    \sigma(\Ll_f).
\end{equation}
Here $\sigma(\Ll_f)$ and $\rho(\Ll_f)$ stand for the spectrum and resolvent set, respectively of the linear relation $\Ll$. Note (cf. \cite{SandSW05}) that $\rho(\Ll_f)\neq \emptyset$ implies that $\sigma(\Ll_f)=\sigma_p(\Ll_f)$ which, in this case, consists of at most $n$ elements.
Furthermore, $\infty$ belongs to $W^n(\mathcal B,\mathcal A)$ whenever
$\mathcal B_f$ is singular and $\rho(\Ll_f)\neq\emptyset$ for at least one admissible block vector $f$.
The notation in \eqref{eq:intro-block-range-relation} emphasizes that this
construction may depend on the chosen range representation of $\Ll$.

The main contributions of the paper can be summarized as follows:
\begin{enumerate}
    \item
    We introduce the range-representation block numerical range
    $W^n(\mathcal B,\mathcal A)$ and establish its basic properties, and its
    connection with the block numerical range of the pencil
    $\lambda\mathcal B-\mathcal A$, and its monotonicity with respect to
    refinements of the underlying block decomposition.

    \item
    We prove spectral inclusions for linear relations. In particular, under the conditions $0\notin\overline{W^n(\mathcal B)}$ or $\mul\Ll=\mul\Ll^*$, the
    point spectrum satisfies
    \[
        \sigma_p(\Ll)
        \subseteq
        W^n(\mathcal B,\mathcal A),
    \]
    and,
    the full spectrum is enclosed by
\begin{align*}
    \sigma(\Ll)
        \subseteq
        \overline{W^n(\mathcal B,\mathcal A)}.\end{align*}
        \item We show
        \begin{align*}
        W^{1}(\mathcal{B},\A)\setminus\{\infty\} = w(\A,\mathcal{B}),
        \end{align*}
    where $w(\A,\mathcal{B})$ is the numerical range of the operator pencil $\lambda \mathcal{B}- \A$ introduced in \cite{zbMATH07330058}, see also \eqref{Four} below. And, if $\mul\Ll=\mul\Ll^*$, one has  
        \begin{align*}
         W^2(\mathcal{B}, \A)\backslash \{\infty\}=W(\Ll),
        \end{align*}
which indicates that $W^2(\mathcal{B},\A)$ is a natural generalization.
    \item
    For relations with nonempty resolvent set, we study particular range representations leading to  spectral enclosures.
\end{enumerate}

The paper is organized as follows. In Section~\ref{sec:preliminaries}, we
recall basic facts about linear relations and their spectra. In
Section~\ref{Section: Block}, we introduce the block numerical range based on
a range representation and establish its elementary properties. In Section~\ref{Spectral_inclusion_via range representation} we establish spectral inclusion results for the block numerical range via a range representation.
In Section~\ref{Arens_decomposition}, we discuss the special structure arising from the Arens decomposition.
In Section~\ref{sec:resolvent-block-numerical-range}, we study the block range based on a resolvent representation and prove spectral enclosures that hold
for every linear relation with nonempty resolvent set. In
Section~\ref{example_comparison}, we present finite-dimensional numerical
examples comparing the ordinary, quadratic, block, and resolvent block
numerical ranges. Finally, in Section~\ref{sec:application-ph}, we apply the block numerical range to a class of port-Hamiltonian differential-algebraic equations, obtaining an explicit left-half-plane spectral enclosure.

\section{Preliminaries on linear relations}
\label{sec:preliminaries}
\noindent

We begin by recalling some basic notions concerning linear relations. Let $X$ be a complex Hilbert space with inner product $\langle \cdot, \cdot\rangle$ and the norm $\|\cdot\|$ induced by the inner product. If $F\subset X$, by $F^\perp $ denote the orthogonal complement of $F$. By abusing the notation, the inner product on the Cartesian product $X^2:= X \times X$ is denoted also by $\langle\cdot, \cdot\rangle$,
\begin{align*}
	\langle (x,y), (x',y')\rangle = \langle x,x'\rangle +\langle y,y'\rangle, \ \  (x,y) , (x',y') \in X^2.
\end{align*}
The orthogonal projection onto a closed subspace $Y\subset X$ is denoted by $P_{Y}$. For closed subspaces $Y$ and $Z$ of the Hilbert space $X$, $Y\oplus Z$ denotes the orthogonal sum and $Y\ominus Z:=Y\cap Z^\perp$ is orthogonal subtraction. 

A linear relation in $X$ is a subspace $\Ll\subset X\times X$. It is
called closed if $\Ll$ is a closed subspace of $X \times X$. For a linear relation $\Ll$, we define the \emph{domain}, the \emph{range}, the \emph{kernel} and the \emph{multivalued part} of $\Ll$ as
\begin{eqnarray*}
	\dom\Ll&:= &\{x\in X ~ :~ (x,y) \in \Ll ~ \text{for some} ~ y\in X\};\\
	\ran\Ll&:= &\{y\in X ~ :~ (x,y) \in \Ll ~ \text{for some} ~ x\in X\};\\
	\ker\Ll&:= &\{x\in X ~ :~ (x,0) \in \Ll\};\\
	\mul\Ll&:= &\{y\in X ~ :~ (0,y) \in \Ll\}.
\end{eqnarray*}
A linear relation $\Ll$ is the graph of an operator if and only if $\mul\Ll = \{0\}$. Further, we define the \emph{inverse} $\Ll^{-1}$ and the \emph{adjoint} $\Ll^{*}$ as follows
\begin{eqnarray*}
    \Ll^{-1}&:=&\{(y,x) \ :\  (x,y) \in \Ll \}; \\
    \Ll^*&:=& \{(y,y')\in X\times X ~:~ \langle x',y\rangle=\langle x,y'\rangle \ \text{for all} \ (x,x')\in \Ll\}.
\end{eqnarray*}
A well-developed spectral theory for closed linear relations has been established; see \cite{zbMATH04015515, zbMATH03953636}. Its basic framework is analogous to that of operators: the expression $\Ll-\lambda I$ which is usually abbreviated as $\Ll-\lambda$, with
\begin{equation*}
	\Ll - \lambda := \{(x,y-\lambda x): (x,y)\in \Ll\}.
\end{equation*}
Together with $\Ll$, also $\Ll-\lambda$ and, thus, $(\Ll-\lambda)^{-1}$, are closed linear relations.  We call $\lambda \in \C$ a resolvent point if
$ \ker(\Ll-\lambda) =\{0\}$ and $ \ \ran(\Ll-\lambda)= X. $
Hence, $(\Ll-\lambda)^{-1}$ is a linear operator defined on $X$ and it is closed.
The Closed Graph Theorem implies that $(\Ll-\lambda)^{-1}$ is the graph of a bounded operator. Furthermore, we say that $\infty$ is a resolvent point of $\Ll$ if $\mul\Ll=\{0\}$ and $\dom\Ll=X$.

Moreover, if $\mul\Ll=\{0\}$ then the \emph{point spectrum} $\sigma_p(\Ll)$ is given by 
\begin{equation*}
\sigma_p(\Ll):=
	   \{\lambda\in\C~:~\ker(\Ll - \lambda)\neq\{0\}\}
\end{equation*}
or, equivalently $\lambda\in\sigma_p(\Ll)$ if $(x,\lambda x)\in\Ll$ for some $x\in X\setminus\{0\}$. If $\mul\Ll\neq\{0\}$, then we include $\infty$ in the point spectrum $\sigma_p(\Ll)$. For $\lambda \in \C,$ one has  
\begin{align*}
    \ker(\Ll-\lambda)= \ker(\Ll^{-1}-\lambda^{-1}), \quad \lambda \neq 0, \quad \hbox{and} \quad \ker \Ll=\mul \Ll^{-1}.
\end{align*} 
Therefore, it follows that 
\begin{align}
\lambda \in \sigma_p(\mathcal{L}) &\Longleftrightarrow \lambda^{-1} \in \sigma_p(\mathcal{L}^{-1}), \quad \lambda \neq 0, \label{point spectrum: inverse linear relation}\\
\intertext{and}
0 \in \sigma_p(\mathcal{L}) &\Longleftrightarrow \infty \in \sigma_p(\mathcal{L}^{-1}). \label{point spectrum: inverse linear relation_infinity} 
\end{align}
Let $\Ll$ and $\Rr$ be linear relations in $X$, we define the \emph{sum} $\Ll+ \Rr$, and the \emph{product} $\Rr \Ll$ by 
\begin{eqnarray*}
    \Ll+ \Rr &:=&\{(x,y+z)~:~(x,y) \in \Ll, \ (x,z) \in \Rr \}; \\
    \Rr \Ll &:=&\{(x,z)~:~(x,y) \in \Ll, \ (y,z) \in \Rr \}.
\end{eqnarray*}
We adopt the following particular representation of a closed linear relation whose resolvent set is non-empty \cite[Proposition~A.2.4]{Haas06}.

\begin{proposition}
\label{prop:relationfrompsres}
	Suppose that $\Ll$ is a closed linear relation in a Banach space $X$ whose resolvent is nonempty, $\rho(\Ll)\neq\emptyset$. Then,
    \begin{align*}
			\Ll  = \ran
			\begin{bmatrix}
				(\Ll-\mu)^{-1}\\ I+\mu(\Ll-\mu)^{-1}
			\end{bmatrix} \quad \text{and} \quad  \begin{matrix} \ker (\Ll-\mu)^{-1} &=& \mul \Ll \\ 
			\ran (\Ll-\mu)^{-1} &=& \dom \Ll,\end{matrix}
\end{align*}
for all $\mu\in\rho(\Ll)$.
\end{proposition}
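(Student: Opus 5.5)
Fix $\mu\in\rho(\Ll)$ and set $R:=(\Ll-\mu)^{-1}$. Since $\mu$ is a resolvent point, $R$ is an everywhere defined bounded operator on $X$, as noted in the preliminaries via the Closed Graph Theorem. The plan is to work directly with the definitions of inverse and of $\Ll-\mu$, unwinding membership in each set and showing the two inclusions separately.

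For the range representation, take $z\in X$. Then $(z,Rz)\in R=(\Ll-\mu)^{-1}$, so $(Rz,z)\in\Ll-\mu$. By the definition of $\Ll-\mu$ there is $(x,y)\in\Ll$ with $x=Rz$ and $z=y-\mu x$. Hence $y=z+\mu Rz$, and so
\[
(Rz,(I+\mu R)z)=(x,y)\in\Ll .
\]
This gives the inclusion $\ran\begin{bmatrix} R\\ I+\mu R\end{bmatrix}\subseteq\Ll$.

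Conversely, take $(x,y)\in\Ll$ and put $z:=y-\mu x$. Then $(x,z)\in\Ll-\mu$, so $(z,x)\in R$, that is, $x=Rz$ because $R$ is single-valued. Consequently $y=z+\mu x=(I+\mu R)z$, and $(x,y)$ lies in the range of the column operator. This proves the first identity.

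For the kernel, observe that $\ker R=\mul (R^{-1})=\mul(\Ll-\mu)$. We then check that $\mul(\Ll-\mu)=\mul\Ll$: the pairs in $\Ll-\mu$ with first component $0$ are exactly the pairs $(0,y-0)$ with $(0,y)\in\Ll$. For the range, $\ran R=\dom R^{-1}=\dom(\Ll-\mu)=\dom\Ll$, since subtracting $\mu x$ does not change the first components. Alternatively, both facts can be read off the range representation just proved: $Rz=0$ holds iff $(0,z)\in\Ll$, and the first components of the representation are exactly $\ran R$.

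I expect no real obstacle. The only points needing care are the single-valuedness of $R$, which follows from $\ker(\Ll-\mu)=\{0\}$, and the fact that $R$ is everywhere defined, which follows from $\ran(\Ll-\mu)=X$ and is used so that every $z\in X$ is admissible. The closedness of $\Ll$ and the Banach space setting are only needed to guarantee that $R$ is bounded, not for the set identities themselves.
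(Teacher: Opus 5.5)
Your argument is correct. You unwind the definitions of $\Ll-\mu$ and of the inverse, using $\ker(\Ll-\mu)=\{0\}$ for single-valuedness of $R$ and $\ran(\Ll-\mu)=X$ for $\dom R=X$, with $\mu$ finite as the formula implicitly requires; this is exactly the standard verification.

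The paper gives no proof of its own and simply quotes the result from Haase's monograph (Proposition A.2.4), where it rests on this same direct computation.
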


The following result guarantees that linear relations in finite-dimensional space have finite spectrum \cite{SandSW05}. 
\begin{proposition}
\label{prop:finite_spectrum}
	Let $\Ll$ be a linear relation in $\C^n$ with $\rho(\Ll)\neq\emptyset$. Then the spectrum of $\Ll$ is purely point and finite. That is, $\sigma(\Ll) = \sigma_p(\Ll)$ and $\sigma(\Ll)$ is a finite set. Furthermore, if $\Ll=\ran\left[\begin{smallmatrix}
	    \mathcal{B}\\ \mathcal{A}
	\end{smallmatrix}\right]$ for some $\mathcal{A},\mathcal{B}\in\mathbb{C}^{n\times n}$, then the following is equivalent:
\begin{enumerate}[(a)]
\item $\lambda\in\sigma_p(\mathcal{L})\setminus\{\infty\}$;
\item $\det(\lambda\mathcal{B}-\mathcal{A})=0$;
\item $0\in\sigma(\lambda\mathcal{B}-\mathcal{A})$. 
\end{enumerate}
Furthermore, $\infty\in\sigma_p(\mathcal{L})$ if and only if $\det(\mathcal{B})=0$, or equivalently, $0\in\sigma(\mathcal{B})$.
\end{proposition}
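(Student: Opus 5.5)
Fix $\mu\in\rho(\Ll)$; the plan is to work through Proposition~\ref{prop:relationfrompsres} and reduce the relation to the resolvent matrix $R:=(\Ll-\mu)^{-1}$. Since $\Ll\subseteq\C^n\times\C^n$ is closed and $\ker(\Ll-\mu)=\{0\}$, $\ran(\Ll-\mu)=\C^n$, the relation $(\Ll-\mu)^{-1}$ is the graph of a linear map $R\in\C^{n\times n}$. Proposition~\ref{prop:relationfrompsres} then gives $\Ll=\ran\left[\begin{smallmatrix} R\\ I+\mu R\end{smallmatrix}\right]$, together with $\ker R=\mul\Ll$. From the definition of $\Ll-\lambda$ I would check directly that for $\lambda\neq\mu$
\[
\ker(\Ll-\lambda)=\ker\bigl(I+(\mu-\lambda)R\bigr),\qquad \ran(\Ll-\lambda)=\ran\bigl(I+(\mu-\lambda)R\bigr).
\]
Indeed, $(Rz,(I+\mu R)z-\lambda Rz)=(Rz,(I+(\mu-\lambda)R)z)$. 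If $(I+(\mu-\lambda)R)z=0$, then $z\neq0$ forces $Rz\neq 0$, since otherwise $z=0$.

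Hence for $\lambda\neq\mu$, $\lambda\in\sigma(\Ll)$ if and only if $I+(\mu-\lambda)R$ is singular. Here I use that in finite dimensions a square matrix is injective iff surjective, which gives $\sigma=\sigma_p$ on the finite part. This holds iff $(\lambda-\mu)^{-1}$ is a nonzero eigenvalue of $R$. Since $R$ has at most $n$ eigenvalues, the finite spectrum has at most $n$ points and consists of eigenvalues. The point $\lambda=\mu$ lies in $\rho(\Ll)$ by assumption, so it causes no trouble.

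For the equivalences, write $\Ll=\ran\left[\begin{smallmatrix}\mathcal B\\ \mathcal A\end{smallmatrix}\right]$. Then $\Ll-\lambda=\{(\mathcal Bz,(\mathcal A-\lambda\mathcal B)z)\}$, so $\ker(\Ll-\lambda)\neq\{0\}$ iff there is $z$ with $(\lambda\mathcal B-\mathcal A)z=0$ and $\mathcal Bz\neq0$. The direction (a)$\Rightarrow$(b) is immediate. The obstacle is (b)$\Rightarrow$(a): one must exclude a kernel vector $z$ of $\lambda\mathcal B-\mathcal A$ with $\mathcal Bz=0$, i.e.\ $z\in\ker\mathcal A\cap\ker\mathcal B$.

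I would handle this through $\rho(\Ll)\neq\emptyset$. Since $\ran(\Ll-\mu)=\C^n$, we have $\ran(\mathcal A-\mu\mathcal B)=\C^n$, so the square matrix $\mathcal A-\mu\mathcal B$ is invertible. Therefore $\ker\mathcal A\cap\ker\mathcal B=\{0\}$, and also $\det(\lambda\mathcal B-\mathcal A)\not\equiv0$. Now if $\det(\lambda\mathcal B-\mathcal A)=0$, pick $z\neq0$ in the kernel; then $\mathcal Bz=0$ would force $\mathcal Az=0$, a contradiction. Thus $(\mathcal Bz,\lambda\mathcal Bz)\in\Ll$ with $\mathcal Bz\neq0$, which is (a). Finally, (b)$\Leftrightarrow$(c) is trivial.

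For $\infty$: $\mul\Ll\neq\{0\}$ iff there is $z$ with $\mathcal Bz=0$ and $\mathcal Az\neq0$. If $\det\mathcal B=0$, take $z\neq0$ in $\ker\mathcal B$; the trivial intersection above gives $\mathcal Az\neq0$. Conversely, $\mathcal Bz=0$ with $\mathcal Az\neq0$ gives $z\neq0\in\ker\mathcal B$.

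Finiteness of $\sigma(\Ll)$ including $\infty$ then follows from the first paragraph. One remaining check is whether $\infty\in\sigma(\Ll)$ is precisely $\mul\Ll\neq\{0\}$. This holds because $\dom\Ll=\ran R=\C^n$ iff $R$ is injective iff $\mul\Ll=\ker R=\{0\}$, again by finite dimensionality. So $\sigma(\Ll)=\sigma_p(\Ll)$ holds at $\infty$ as well.
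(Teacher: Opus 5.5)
Your proof is correct, and it takes a different route from the paper. The paper gives no argument for this proposition and simply imports it from \cite{SandSW05}, where it follows from the canonical structure theory of linear relations in $\C^n$.

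You instead give a self-contained, elementary proof. Using Proposition~\ref{prop:relationfrompsres} you write $\Ll=\ran\left[\begin{smallmatrix} R\\ I+\mu R\end{smallmatrix}\right]$ with $R=(\Ll-\mu)^{-1}\in\C^{n\times n}$. The map $\lambda\mapsto(\lambda-\mu)^{-1}$, together with $\infty\leftrightarrow 0$ via $\mul\Ll=\ker R$ and $\dom\Ll=\ran R$, then identifies $\sigma(\Ll)$ with $\sigma(R)$. This gives $\sigma(\Ll)=\sigma_p(\Ll)$, and even the count of at most $n$ points with $\infty$ included. Beyond that proposition, you only need that a square matrix is injective iff it is surjective. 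This is the same M\"obius argument the paper itself uses later in Lemma~\ref{lem:spectrum-compressed-relation} and throughout Section~\ref{sec:resolvent-block-numerical-range}, so your route makes the paper more uniform and self-contained.

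Your handling of the pencil equivalences is also informative. It shows that $\rho(\Ll)\neq\emptyset$ enters (a)$\Leftrightarrow$(b) and the criterion for $\infty$ only through $\ker\mathcal A\cap\ker\mathcal B=\{0\}$, which you get from the invertibility of $\mathcal A-\mu\mathcal B$. Regularity of the pencil is what gives finiteness. Compare Example~\ref{ex:singular}: there the kernels of $\mathcal A_f$ and $\mathcal B_f$ intersect trivially and the equivalence still holds, but $\sigma(\Ll_f)=\C$. What the citation provides in exchange is the full structure theory, which is not needed here.

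One small tidy-up: if $\rho(\Ll)$ is read as a subset of $\C\cup\{\infty\}$, remark that a finite $\mu\in\rho(\Ll)$ always exists. Indeed, $\infty\in\rho(\Ll)$ means $\Ll$ is the graph of a matrix, so all but finitely many $\mu\in\C$ are resolvent points.
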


The approximate point spectrum of a linear relation is defined as in  \cite{azizov2011spectral}.
\begin{definition}
	Let $\mathcal{L}$ be a linear relation in $X$. The \emph{approximate point spectrum} of $\mathcal{L}$ is given by 
	\begin{equation*}
		\sigma_{\ap}(\mathcal{L})=\{\lambda \in \C: \exists \ (x_n, \tilde{x}_n) \in\mathcal{L}, \ \ \textup{such that } \ \|x_n\|=1 \ \textup{and} \ \lim_{n\to\infty} \|\tilde{x}_n-\lambda x_n \|=0 \}
	\end{equation*} and $\infty\in\sigma_{\ap}(\mathcal{L}) $ if $0\in\sigma_{\ap}(\Ll^{-1}).$
\end{definition}

   \section{Block numerical range of linear relations}\label{Section: Block}
\noindent 
In this section, we introduce the notion of the block numerical range based on a range representation, starting with the following definition. 

\begin{definition}
\label{definition:truncated_LR}
	Let $\mathcal{A}:X\to X$ and $\mathcal{B}:X\to X$ be bounded linear operators in a Hilbert space $X$ with orthogonal decomposition $X=\bigoplus_{i=1}^nX_i$. For $f = (f_1, \ldots, f_n) \in \mathcal{S}_{X_1} \times \cdots \times \mathcal{S}_{X_n}$ with $f_i\in X_i$. Define the isometry
$$
F_f:\mathbb{C}^n\longrightarrow X,
\qquad
F_f c:=\sum_{j=1}^n c_jf_j,
$$ and $\mathcal{A}_{f}\in \mathcal{M}_n(\mathbb{C})$ by
	\begin{align*} \label{matrix A_f definition}
		\mathcal{A}_{f}:=
F_f^* \A F_f= \begin{bmatrix}
		\langle \A f_1,f_1\rangle  & \cdots& \langle \A f_n, f_1\rangle \\ \vdots &  & \vdots \\ \langle \A f_1,f_n \rangle  & \cdots& \langle \A  f_n , f_n \rangle
		\end{bmatrix} \in\mathbb{C}^{n\times n}.
	\end{align*}
	Let $\Ll$ be a closed linear relation in a Hilbert space $X$ given by \eqref{linear relation: range representation}. Then the \emph{truncated linear relation} is given by 
	\begin{equation}
    \label{eq:truncated_relation}
		\Ll_f:=\ran
		\begin{bmatrix}
			\mathcal{B}_f\\ \A_f
		\end{bmatrix}\subseteq\mathbb{C}^n\times\mathbb{C}^n.
	\end{equation}
\end{definition}
Note that in Defintion \ref{definition:truncated_LR} all subspaces $X_i$ are closed. This is due to the continuity of the inner product of the Hilbert space $X.$
\begin{definition} \label{Definition: Block numerical range} Let $\mathcal{L}$ be a closed linear relation in $X$ given by  \eqref{linear relation: range representation}. Then the block numerical range of~$\Ll$ (w.r.t.\ the range representation \eqref{linear relation: range representation}) is defined as
\begin{align*}
W^n(\mathcal{B}, \A):=\bigcup\limits_{\substack{f\in\prod_{i=1}^n\mathcal{S}_{X_i}, \\ \rho(\mathcal{L}_f)\neq\emptyset}} \sigma(\mathcal{L}_f).
\end{align*}
Moreover, $\infty\in W^n(\mathcal{B}, \A)$ if there exists $f\in\prod_{i=1}^n\mathcal{S}_{X_i}$ such that  $\mathcal{B}_f$ is singular and $\rho(\mathcal{L}_f)\neq\emptyset.$
\end{definition}
 The following example shows that even for linear relations that are graphs of matrices, the truncated relation $\Ll_f$ might have the entire complex plane as spectrum, which shows why the restriction to those $f\in X$ satisfying $\rho(\Ll_f)\neq \emptyset$ is necessary.
\begin{example}
\label{ex:singular}
Consider
\begin{align*}
\mathcal{A}=\begin{bmatrix}
    0&0&1\\ 1&0&0 \\ 0&1&0 
\end{bmatrix},\quad \mathcal{B} = \begin{bmatrix}
      0&0&1\\ 0&1&0 \\ 1&0&0 
\end{bmatrix}. 
\end{align*}
Then the relation $\Ll=\ran\begin{bmatrix}
    \mathcal{B}\\ \mathcal{A}
\end{bmatrix}=\ran\begin{bmatrix}
    I\\ \mathcal{A}\mathcal{B}^{-1}
\end{bmatrix}$ in $\R^3$ is the graph of the matrix $\mathcal{A}\mathcal{B}^{-1}$ and its point spectrum consists of finitely many points.
Using the canonical unit vectors $e_1,e_2$, and $e_3$ in $\R^3$ we split $X_1=\Span\{e_1\}$ and $X_2=\Span\{e_2,e_3\}$ and we choose $f_1=e_1$ and $f_2=e_2$ then 
\[
\Ll_f=\ran\begin{bmatrix}
    \mathcal{B}_f\\ \mathcal{A}_f 
\end{bmatrix} =\ran\begin{bmatrix}
    \langle \mathcal{B}e_1,e_1 \rangle & \langle \mathcal{B}e_2,e_1 \rangle\\\langle \mathcal{B}e_1,e_2 \rangle& \langle \mathcal{B}e_2,e_2 \rangle \\  \langle \mathcal{A}e_1,e_1 \rangle & \langle \mathcal{A}e_2,e_1 \rangle\\\langle \mathcal{A}e_1,e_2 \rangle& \langle \mathcal{A}e_2,e_2 \rangle
\end{bmatrix}=\ran\begin{bmatrix}
    0 & 0 \\ 0& 1 \\  0 & 0 \\ 1& 0
\end{bmatrix}.
\]
It follows that $$\mathcal{B}_f\begin{bmatrix}
    \lambda \\ 1
\end{bmatrix}=\begin{bmatrix}
    0 \\ 1
\end{bmatrix} \  \text{and} \ \mathcal{A}_f\begin{bmatrix}
    \lambda \\ 1
\end{bmatrix}=\lambda\begin{bmatrix}
    0 \\ 1
\end{bmatrix}.$$
As a consequence, we have $(e_2,\lambda e_2)\in\Ll_f$  and hence $(e_2,0)\in\Ll_f-\lambda$ for all $\lambda\in\mathbb{C}$. Thereby $\sigma(\Ll_f)=\mathbb{C}$. 
\end{example}
We define $W^n(\mathcal{B}^*, \A^*)^*:=\{ \overline{\lambda}\in \C : \lambda\in W^n(\mathcal{B}^*, \A^*) \}$ for the complex conjugate of $W^n(\mathcal{B}^*, \A^*)$.
\begin{proposition}
    \label{Remark: Range_Kernel} Let $\mathcal{A}:X\to X$ and $\mathcal{B}:X\to X$ be bounded linear operators in a Hilbert space $X$ with adjoints $\A^*$ and $\mathcal{B}^*$, respectively. We consider the linear relation $$ \Ll'= \ker[\A, -\mathcal{B}],
$$ called the \emph{kernel representation}. Then the set $W^n(\mathcal{B}, \A)$ has the equivalent representation\begin{align}\label{Def_Equivalence_Kernel}
W^n(\mathcal{B}, \A)=\bigcup\limits_{\substack{f\in\prod_{i=1}^n\mathcal{S}_{X_i}, \\ \rho(\mathcal{L}'_f)\neq\emptyset}} \sigma(\mathcal{L}'_f), \quad \text{with} \ \mathcal{L}'_f:= \ker[\A_f, -\mathcal{B}_f].
\end{align} Moreover,  \begin{align}\label{Adjoint} W^n(\mathcal{B}, \A)=  W^n(\mathcal{B}^*, \A^*)^*.\end{align}
\end{proposition}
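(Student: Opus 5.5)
Both identities reduce to finite-dimensional statements about the $n\times n$ matrices $\mathcal A_f,\mathcal B_f$, since the index set $\prod_{i}\mathcal S_{X_i}$ is the same on both sides. So the plan is to show, for each fixed $f$, that the relevant truncated relations have the same resolvent set and spectrum, including the point $\infty$.

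For \eqref{Def_Equivalence_Kernel}, fix $f$ and compare $\Ll_f=\ran\left[\begin{smallmatrix}\mathcal B_f\\ \A_f\end{smallmatrix}\right]$ with $\Ll'_f=\ker[\A_f,-\mathcal B_f]=\{(x,y)\in\C^n\times\C^n:\A_fx=\mathcal B_fy\}$. These are in general different subspaces, so I will compare them through their spectra rather than as sets.

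First, if $\rho(\Ll_f)\neq\emptyset$, then the pencil $\lambda\mathcal B_f-\A_f$ is regular. Indeed, for $\mu\in\rho(\Ll_f)$, a nonzero $z\in\ker(\mu\mathcal B_f-\A_f)$ would give $(\mathcal B_fz,\mu\mathcal B_fz)\in\Ll_f$. Either $\mathcal B_fz\neq0$, so $\mu\in\sigma_p(\Ll_f)$, or $\mathcal B_fz=0=\A_fz$. In the latter case $\dim\Ll_f<n$, which contradicts $\Ll_f-\mu$ being the inverse graph of an everywhere defined matrix. Conversely, regularity gives $\rho(\Ll_f)\neq\emptyset$ by Proposition~\ref{prop:finite_spectrum}.

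For $\Ll'_f$, the point $\lambda\in\C$ is an eigenvalue iff there is $x\neq0$ with $\A_fx=\lambda\mathcal B_fx$, i.e. $\det(\lambda\mathcal B_f-\A_f)=0$. Also $\mul\Ll'_f=\ker\mathcal B_f$, so $\infty\in\sigma_p(\Ll'_f)$ iff $\det\mathcal B_f=0$. Moreover, $\lambda\in\rho(\Ll'_f)$ iff $(\Ll'_f-\lambda)^{-1}$ is an everywhere defined matrix. For regular pencils this holds exactly when $\lambda\mathcal B_f-\A_f$ is invertible, using the formula $(\Ll'_f-\lambda)^{-1}=(\A_f-\lambda\mathcal B_f)^{-1}\mathcal B_f$.

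Hence both relations have nonempty resolvent set iff the pencil is regular, and then both spectra equal the zeros of $\det(\lambda\mathcal B_f-\A_f)$, together with $\infty$ when $\det\mathcal B_f=0$. Taking the union over $f$ gives \eqref{Def_Equivalence_Kernel}.

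For \eqref{Adjoint}, note that $(\A^*)_f=F_f^*\A^*F_f=(\A_f)^*$, and likewise $(\mathcal B^*)_f=(\mathcal B_f)^*$. Taking the conjugate transpose gives
\[\det(\overline\lambda\mathcal B_f^*-\A_f^*)=\overline{\det(\lambda\mathcal B_f-\A_f)}.\]
Therefore the pencil for $(\mathcal B^*,\A^*)$ at $f$ is regular iff the pencil for $(\mathcal B,\A)$ is, and its finite eigenvalues are the conjugates. The condition $\det\mathcal B_f^*=0$ is equivalent to $\det\mathcal B_f=0$, which handles $\infty$. By the characterization above, this yields $\sigma(\Ll_f)=\overline{\sigma((\Ll^{\ast\text{-data}})_f)}$ for every $f$, and the union over $f$ gives the claim.

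The main obstacle is the equivalence between $\rho(\Ll_f)\neq\emptyset$ and regularity of the pencil. Proposition~\ref{prop:finite_spectrum} is only stated in one direction. The degenerate case where $\ker\mathcal B_f\cap\ker\A_f\neq\{0\}$, as in Example~\ref{ex:singular}, must be shown to force $\sigma(\Ll_f)=\C$. I would handle this with the dimension count above: in that case $\dim\Ll_f<n$, so no $\Ll_f-\mu$ can have an everywhere defined single-valued inverse.
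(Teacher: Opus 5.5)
Your argument is correct, but it takes a different route from the paper. The paper obtains \eqref{Def_Equivalence_Kernel} by quoting \cite[Proposition 3]{gernandt2021spectrum}, which says that range and kernel representations of the same pencil have the same spectrum and resolvent set. It obtains \eqref{Adjoint} structurally, from $\ker[\A,-\mathcal{B}]=\bigl(\ran\left[\begin{smallmatrix}\mathcal B^*\\ \A^*\end{smallmatrix}\right]\bigr)^*$. Since $(\A^*)_f=(\A_f)^*$, at the truncated level this reads $\Ll'_f=\bigl(\ran\left[\begin{smallmatrix}\mathcal B_f^*\\ \A_f^*\end{smallmatrix}\right]\bigr)^*$; conjugation of spectra under taking adjoints, together with the first identity, finishes the proof.

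You instead reduce everything to the determinant of the $n\times n$ pencil:
\begin{itemize}
\item $\rho(\Ll_f)\neq\emptyset$ and $\rho(\Ll'_f)\neq\emptyset$ are each equivalent to regularity of $\lambda\mathcal B_f-\A_f$;
\item both spectra then equal the zero set of $\det(\lambda\mathcal B_f-\A_f)$, together with $\infty$ exactly when $\det\mathcal B_f=0$;
\item \eqref{Adjoint} follows from $\det(\overline{\lambda}\mathcal B_f^*-\A_f^*)=\overline{\det(\lambda\mathcal B_f-\A_f)}$, without ever forming an adjoint relation.
\end{itemize}
Your version is self-contained and makes explicit the equivalence ``$\rho(\Ll_f)\neq\emptyset$ if and only if the compressed pencil is regular''. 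The paper uses this equivalence tacitly later, for instance in the proof of Proposition~\ref{Perfekt}. The paper's version is shorter and ties the identity to general adjoint theory rather than to determinants.

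There are two small slips, neither of which affects validity.
\begin{itemize}
\item Proposition~\ref{prop:finite_spectrum} does not state ``regular $\Rightarrow\rho(\Ll_f)\neq\emptyset$''. This is a one-line check, though: if $\A_f-\mu\mathcal B_f$ is invertible, then $\ran(\Ll_f-\mu)=\ran(\A_f-\mu\mathcal B_f)=\C^n$ and $\ker(\Ll_f-\mu)=\mathcal B_f\ker(\A_f-\mu\mathcal B_f)=\{0\}$.
\item Example~\ref{ex:singular} is not an instance of $\ker\mathcal B_f\cap\ker\A_f\neq\{0\}$: there $\ker\mathcal B_f=\Span\{e_1\}$ and $\ker\A_f=\Span\{e_2\}$. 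Its degeneracy is caught by the other branch of your dichotomy, where $\mathcal B_fz\neq0$ makes every $\mu$ an eigenvalue. Your proof covers it, but your closing ``main obstacle'' remark describes the degenerate case inaccurately.
\end{itemize}
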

\begin{pf}
 Let $\lambda \in W^n(\mathcal{B}, \A)$, then from Definition \ref{Definition: Block numerical range} there exists $f=(f_i)_{i=1}^n\in\prod_{i=1}^n\mathcal{S}_{X_i}$ such that $\lambda\in \sigma(\mathcal{L}_f)$ and $\rho(\mathcal{L}_f)\neq\emptyset$, which is equivalent, by  \cite[Proposition 3]{gernandt2021spectrum} to $\lambda\in \sigma(\mathcal{L}'_f)$ with $\rho(\mathcal{L}'_f)\neq\emptyset$. This gives \eqref{Def_Equivalence_Kernel}. Obviously, one has  
\begin{align*}
\ker[\A, -\mathcal{B}]= \bigg( \ran\begin{bmatrix}\mathcal{B}^*\\ \A^*\end{bmatrix}\bigg)^* 
	\end{align*}
 and hence one has \eqref{Adjoint}.
\end{pf}
In the following we collect some elementary properties of the block numerical range of a linear relation. 
\begin{proposition}
        Let $\mathcal{L}$ be a closed linear relation in $X$ given by  \eqref{linear relation: range representation}. Then the following hold.
\begin{enumerate}[(a)]
    \item $W^n( \mathcal{B}, \alpha \A+ \beta \mathcal{B})= \alpha W^n(\mathcal{B}, \A)+\beta, \quad$ for $\alpha, \beta \in \mathbb{C}$, $\alpha\neq 0$.
    \item We have \begin{align*}
        W^n(\A, \mathcal{B})\backslash \{0, \infty\}= \{\lambda ^{-1} \in \C \ |\ \lambda \in  W^n(\mathcal{B}, \A), \ \lambda \neq 0\}. \end{align*}
    Moreover,  \begin{align}\label{infinity_inverse}
         \infty \in W^n(\A, \mathcal{B})\iff 0\in W^n(\mathcal{B}, \A). \end{align}
    \item  We have \begin{eqnarray*}
    W^n(\mathcal{B}, \A)& = & \bigl\{
\lambda \in \mathbb{C} \cup \{\infty\} :
\begin{aligned}[t]
& \ \text{there exists an} \ f=(f_i)_{i=1}^n \in\prod_{i=1}^n\mathcal{S}_{X_i}, \ i=1,\ldots,n,\\
& \ \ \text{such that} \ \rho(\Ll_f)\neq\emptyset \ \text{and}\ \det( \A_f -\lambda \mathcal{B}_f)=0, \text{ if}\ \lambda \in \C,  \\
& \hskip 4.25cm \text{ or } \det(\mathcal{B}_f)=0, \text{ if} \ \lambda=\infty \bigr\}.
\end{aligned} 
\end{eqnarray*}
       
\end{enumerate}
\end{proposition}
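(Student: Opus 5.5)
I will prove (c) first and then read (a) and (b) off from it. The common observation is that compression is linear: $(\alpha\A+\beta\mathcal B)_f=\alpha\A_f+\beta\mathcal B_f$. The set of admissible $f\in\prod_{i=1}^n\mathcal S_{X_i}$ is therefore described entirely by the matrix pair $(\mathcal B_f,\A_f)$.

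For (c), fix $f$ with $\rho(\Ll_f)\neq\emptyset$. Proposition \ref{prop:finite_spectrum} then gives $\sigma(\Ll_f)=\sigma_p(\Ll_f)$. For $\lambda\in\C$ it also gives $\lambda\in\sigma_p(\Ll_f)$ iff $\det(\lambda\mathcal B_f-\A_f)=0$, which is the same as $\det(\A_f-\lambda\mathcal B_f)=0$. Finally, $\infty\in\sigma_p(\Ll_f)$ iff $\det\mathcal B_f=0$. Taking the union over admissible $f$, and using the convention on $\infty$ in Definition \ref{Definition: Block numerical range}, gives exactly the stated set.

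For (a), put $\mathcal L^{(\alpha,\beta)}_f:=\ran\left[\begin{smallmatrix}\mathcal B_f\\ \alpha\A_f+\beta\mathcal B_f\end{smallmatrix}\right]$. A direct computation gives
\[
\mathcal L^{(\alpha,\beta)}_f=\{(x,\alpha y+\beta x):(x,y)\in\Ll_f\}=\alpha\Ll_f+\beta .
\]
If $\alpha\neq0$, then $(\alpha\Ll_f+\beta-\mu)^{-1}=\alpha^{-1}\bigl(\Ll_f-\alpha^{-1}(\mu-\beta)\bigr)^{-1}$, up to the obvious identification. Hence $\rho(\alpha\Ll_f+\beta)=\alpha\rho(\Ll_f)+\beta$, and admissibility is the same for both pairs. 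Alternatively, this is immediate from (c):
\[
\det(\alpha\A_f+\beta\mathcal B_f-\mu\mathcal B_f)=\alpha^n\det\bigl(\A_f-\tfrac{\mu-\beta}{\alpha}\mathcal B_f\bigr),
\]
and $\det\mathcal B_f$ is unchanged. The finite parts therefore correspond under $\lambda\mapsto\alpha\lambda+\beta$. Since $\mathcal B_f$ is the same matrix for both pairs, $\infty$ lies in one set iff it lies in the other. I use the convention $\alpha\infty+\beta=\infty$.

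For (b), the truncated relation for the pair $(\A,\mathcal B)$ is $\ran\left[\begin{smallmatrix}\A_f\\ \mathcal B_f\end{smallmatrix}\right]=\Ll_f^{-1}$. I claim $\rho(\Ll_f^{-1})\neq\emptyset$ iff $\rho(\Ll_f)\neq\emptyset$. In finite dimensions, nonempty resolvent set is equivalent to the pencil $\lambda\mathcal B_f-\A_f$ being regular, i.e.\ $\det(\lambda\mathcal B_f-\A_f)\not\equiv0$; this follows from (c) together with Proposition \ref{prop:finite_spectrum} (if the determinant is not identically zero, only finitely many $\lambda$ are spectral points). Regularity of $\lambda\mathcal B_f-\A_f$ is equivalent to regularity of $\mu\A_f-\mathcal B_f$, since
\[
\det(\mu\A_f-\mathcal B_f)=(-\mu)^n\det(\mu^{-1}\mathcal B_f-\A_f)\quad\text{for }\mu\neq0 .
\]
So the admissible $f$ coincide for the two pairs. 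Then \eqref{point spectrum: inverse linear relation} gives the correspondence $\lambda\leftrightarrow\lambda^{-1}$ for $\lambda\neq0,\infty$. Likewise \eqref{point spectrum: inverse linear relation_infinity}, or directly $\det\A_f=0\iff0$ is a root of $\det(\A_f-\lambda\mathcal B_f)$, gives \eqref{infinity_inverse}.

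I expect the main care point to be precisely this equivalence of the admissibility condition $\rho(\Ll_f)\neq\emptyset$ under inversion and affine transformation. The key is the regular-pencil characterization: if $\det(\lambda\mathcal B_f-\A_f)\not\equiv0$, pick $\lambda$ with $\det(\lambda\mathcal B_f-\A_f)\neq0$. Then $\ker(\Ll_f-\lambda)=\{0\}$ and $\ran(\Ll_f-\lambda)=\C^n$, because $\Ll_f-\lambda=\ran\left[\begin{smallmatrix}\mathcal B_f\\ \A_f-\lambda\mathcal B_f\end{smallmatrix}\right]$ with $\A_f-\lambda\mathcal B_f$ invertible. The converse is Proposition \ref{prop:finite_spectrum}.
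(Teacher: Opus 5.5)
Your proposal is correct and follows essentially the paper's route: (a) via $\Ll_f\mapsto\alpha\Ll_f+\beta$ with admissibility preserved, (b) via $\Ll_f^{-1}$ together with the inverse point-spectrum relations, and (c) directly from Proposition~\ref{prop:finite_spectrum}. Your regular-pencil argument also justifies that $\rho(\Ll_f)\neq\emptyset$ if and only if $\rho(\Ll_f^{-1})\neq\emptyset$, a fact the paper only asserts.
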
 
\begin{pf}
\begin{enumerate}[(a)]
    \item  For $f=(f_i)_{i=1}^n\in\prod_{i=1}^n\mathcal{S}_{X_i}$ we have \begin{eqnarray*}
         \alpha \sigma(\mathcal{L}_f)+\beta &=& \sigma(\alpha \mathcal{L}_f+\beta) \nonumber  = \sigma\bigg( \ran
    \begin{bmatrix}
         \mathcal{B}_f\\[1ex]
    \alpha     \mathcal A_f
    \end{bmatrix}+ \beta I\bigg)  \\
     &=& \sigma(\{(  B_f x, \alpha A_f x+ \beta B_f x) : x\in X\})  = \sigma\bigg( \ran
    \begin{bmatrix}
           \mathcal B_f\\
        \alpha \mathcal  A_f+\beta   B_f
    \end{bmatrix}\bigg).
     \end{eqnarray*} 
     Now, (a) follows by taking the union in the above over all $f=(f_i)_{i=1}^n\in\prod_{i=1}^n\mathcal{S}_{X_i}$ and the fact that $\rho(\Ll_f)\neq \emptyset$ if and only if $\rho(\alpha\Ll_f+\beta)\neq \emptyset.$
    \item  Let $\lambda \in W^n(\mathcal{B}, \A)\backslash \{\infty\}$, $\lambda \neq 0$, there exists $f=(f_i)_{i=1}^n\in\prod_{i=1}^n\mathcal{S}_{X_i}$ such that $\lambda\in \sigma(\mathcal{L}_f)$ and $\rho(\Ll_f)\neq\emptyset$, $\lambda \neq 0$, which is equivalent, by \eqref{point spectrum: inverse linear relation}, to $\lambda^{-1}\in \sigma(\Ll_f^{-1})$ since $\sigma(\mathcal{L}_f)=\sigma_p(\mathcal{L}_f)$ by Proposition \ref{prop:finite_spectrum}. Taking the union of $\sigma(\Ll_f^{-1})$ over all $f=(f_i)_{i=1}^n\in\prod_{i=1}^n\mathcal{S}_{X_i}$ and observing that $\rho(\Ll_f^{-1})\neq \emptyset,$ we obtain $\lambda^{-1} \in W^n(\A, \mathcal{B}).$ The opposite inclusion follows by the same argument. The equivalence in \eqref{infinity_inverse} follows directly from \eqref{point spectrum: inverse linear relation_infinity}.
    \item For $f\in\prod_{i=1}^n\mathcal{S}_{X_i}$, the claim follows immediately by Definition \ref{Definition: Block numerical range} and Proposition~\ref{prop:finite_spectrum}.
 \end{enumerate}
  \end{pf}
The following proposition establishes a connection between the block numerical range of the linear relation $\Ll$ as defined in \eqref{linear relation: range representation} and that of the corresponding operator pencil $\lambda \mathcal{B}-\A$.
\begin{proposition}\label{Perfekt}
    Let $\mathcal{L}$ be a closed linear relation in $X$ given by  \eqref{linear relation: range representation}. Then
    \begin{equation}\label{Linear relation/Pencil}
     W^n(\mathcal{B}, \A)\backslash \{\infty\}\subseteq\{\lambda \in \C : 0 \in  W^n(\lambda \mathcal{B}-\A)\}.\end{equation}
     If, in addition, $0\notin \overline{W^n(\mathcal{B})}$, then 
     \begin{equation}\label{Linear relation/Pencil*}
     W^n(\mathcal{B}, \A)\backslash \{\infty\}=\{\lambda \in \C : 0 \in  W^n(\lambda \mathcal{B}-\A)\}.
     \end{equation}
     \end{proposition}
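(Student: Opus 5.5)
The plan is to reduce both sides to statements about the $n\times n$ matrices $\mathcal A_f$ and $\mathcal B_f$ for a fixed block vector $f$. The first observation is that compression is linear. For every $f\in\prod_{i=1}^n\mathcal S_{X_i}$ and $\lambda\in\C$,
\[
(\lambda\mathcal B-\A)_f=F_f^*(\lambda\mathcal B-\A)F_f=\lambda\mathcal B_f-\A_f .
\]
Applying \eqref{eq:block_range_matrix} to the bounded operator $\lambda\mathcal B-\A$ with the same decomposition $X=\bigoplus_{i=1}^n X_i$ then gives
\[
\{\lambda\in\C: 0\in W^n(\lambda\mathcal B-\A)\}=\bigl\{\lambda\in\C:\ \exists f\in\textstyle\prod_{i=1}^n\mathcal S_{X_i}\ \text{with}\ \det(\lambda\mathcal B_f-\A_f)=0\bigr\}.
\]
Both assertions of Proposition~\ref{Perfekt} are then read off from this description.

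For \eqref{Linear relation/Pencil}, take $\lambda\in W^n(\mathcal B,\A)\setminus\{\infty\}$. By part (c) of the preceding proposition there is an $f$ with $\rho(\Ll_f)\neq\emptyset$ and $\det(\A_f-\lambda\mathcal B_f)=0$. Hence $0\in\sigma(\lambda\mathcal B_f-\A_f)=\sigma((\lambda\mathcal B-\A)_f)\subseteq W^n(\lambda\mathcal B-\A)$. Note that the condition $\rho(\Ll_f)\neq\emptyset$ is simply discarded in this direction.

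For the reverse inclusion under $0\notin\overline{W^n(\mathcal B)}$, let $\lambda\in\C$ with $0\in W^n(\lambda\mathcal B-\A)$ and pick $f$ with $\det(\lambda\mathcal B_f-\A_f)=0$. The only missing ingredient is $\rho(\Ll_f)\neq\emptyset$, and this is where the hypothesis enters. Since $\sigma(\mathcal B_f)\subseteq W^n(\mathcal B)$ and $0\notin W^n(\mathcal B)$, the matrix $\mathcal B_f$ is invertible. Consequently
\[
\Ll_f=\ran\begin{bmatrix}\mathcal B_f\\ \A_f\end{bmatrix}=\ran\begin{bmatrix} I\\ \A_f\mathcal B_f^{-1}\end{bmatrix}
\]
is the graph of the matrix $\A_f\mathcal B_f^{-1}$. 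Its spectrum is therefore a finite set, so $\rho(\Ll_f)\neq\emptyset$. Proposition~\ref{prop:finite_spectrum}, via (b)$\Rightarrow$(a), then gives $\lambda\in\sigma_p(\Ll_f)\setminus\{\infty\}\subseteq W^n(\mathcal B,\A)$.

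I expect no real obstacle. The only point needing care is this last step: confirming that invertibility of $\mathcal B_f$ forces a nonempty resolvent set, which is what excludes the degenerate situation of Example~\ref{ex:singular}. It also explains why, without the hypothesis, only the inclusion \eqref{Linear relation/Pencil} can be expected. For the argument only $0\notin W^n(\mathcal B)$ is needed; the closure in the hypothesis is stronger than required.
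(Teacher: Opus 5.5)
Your proof is correct. The first inclusion is argued exactly as in the paper: you pass from $\lambda\in\sigma(\Ll_f)$ to $0\in\sigma(\lambda\mathcal B_f-\A_f)=\sigma((\lambda\mathcal B-\A)_f)$ and simply drop the resolvent condition.

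For the reverse inclusion you obtain the missing ingredient $\rho(\Ll_f)\neq\emptyset$ by a different, more elementary route. You note that $\sigma(\mathcal B_f)\subseteq W^n(\mathcal B)$ holds by the very definition of $W^n(\mathcal B)$. Hence each $\mathcal B_f$ is invertible, $\Ll_f$ is the graph of $\A_f\mathcal B_f^{-1}$, and $\infty\in\rho(\Ll_f)$.

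The paper instead invokes \cite[Lemma 6.3]{radl2014block} to get $0\notin\overline{W^n(\mu\mathcal B-\A)}$ for all sufficiently large $|\mu|$. From this it concludes that some finite $\mu$, independent of $f$, lies in $\rho(\Ll_f)$, i.e.\ the compressed pencil is regular.

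Your argument is self-contained and needs no external lemma. As you correctly observe, it proves the equality \eqref{Linear relation/Pencil*} under the weaker pointwise hypothesis $0\notin W^n(\mathcal B)$, and it also shows $\infty\notin W^n(\mathcal B,\A)$ in that case. The paper's route buys uniformity instead. The closure hypothesis gives invertibility of the $\mathcal B_f$ uniformly in $f$ (a uniform lower bound on $|\det\mathcal B_f|$), and hence resolvent points independent of $f$. That uniformity is what is genuinely needed later, for the boundedness in Proposition~\ref{remark3} and the closure equivalence in Lemma~\ref{Equivalence: linear relation_Pencil}. It is not needed for the set identity \eqref{Linear relation/Pencil*} itself.
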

    \begin{pf}
          Let $\lambda\in W^n(\mathcal{B}, \A)\backslash \{\infty\}$. Then there 
          exists $f\in\prod_{i=1}^n\mathcal{S}_{X_i}$ such that $\lambda \in \sigma(\Ll_f)$ and $\rho(\Ll_f)\neq\emptyset$. Thus, by Proposition~\ref{prop:finite_spectrum}, we have  $0\in \sigma(\lambda \mathcal{B}_f-\A_f)$. In view of the definition of the block numerical range in \eqref{eq:block_range_matrix} (cf.\ \cite[Definition 2.1]{TretW03}), it follows that  $0 \in  W^n(\lambda \mathcal{B}-\A)$. This shows \eqref{Linear relation/Pencil}.

          We show the opposite inclusion in
          \eqref{Linear relation/Pencil*}. Let $\lambda$ be such that $
              0 \in  W^n(\lambda \mathcal{B}-\A)$. By \eqref{eq:block_range_matrix}, it follows that there exists $f\in\prod_{i=1}^n\mathcal{S}_{X_i}$  such that 
              \begin{align}\label{Spectrum_Pencil}
              0\in \sigma(\lambda \mathcal{B}_f-\A_f). 
              \end{align}  
              By assumption, we have $0\notin \overline{W^n(\mathcal{B})}$. It follows from \cite[Lemma 6.3]{radl2014block} that $0\notin \overline{W^{n}(\mu \mathcal{B}-\A)}$  for large values of $\lvert\mu\rvert$. Thus, by \eqref{5a}, $0\notin \sigma(\mu \mathcal{B}_f-\mathcal{A}_f)$ for all $f\in\prod_{i=1}^n\mathcal{S}_{X_i}$ and hence the pencil $\lambda \mathcal{B}_f-\A_f$ is regular, which is equivalent to $\rho(\Ll_f)\neq \emptyset.$  By Proposition \ref{prop:finite_spectrum} and \eqref{Spectrum_Pencil}, we obtain $\lambda \in \sigma(\Ll_f)\setminus \{\infty\}.$ Taking the union over all $f\in\prod_{i=1}^n\mathcal{S}_{X_i}$such that $\rho(\Ll_f)\neq \emptyset$, we obtain $\lambda \in W^n(\mathcal{B}, \A)\backslash \{\infty\}.$ 
         \end{pf}
In what follows, we establish the relation between the block numerical range $W^{1}(\mathcal{B}, \A)$ and the numerical range for the operator pencil $\lambda \mathcal{B}- \A$ introduced in \cite{zbMATH07330058}. We recall the latter definition, which is given by
\begin{equation}\label{Four}
  w(\A,\mathcal{B}):=\left\{\frac{\langle \A x,x\rangle}{\langle \mathcal{B} x,x\rangle}\ :\ x\in X,\ \langle \mathcal{B}x,x\rangle\neq0\right\}.
\end{equation}
\begin{proposition}
\label{prop:scalar-block-range}
Let $\Ll$ be a closed linear relation in $X$ given by  \eqref{linear relation: range representation}. Then 
$$ 
W^{1}(\mathcal{B},\A)\setminus\{\infty\} = w(\A,\mathcal{B}). 
$$ 
\end{proposition}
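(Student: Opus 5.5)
For $n=1$ the decomposition is trivial, $X_1=X$, and an admissible $f$ is just a unit vector $x\in\mathcal S_X$. The plan is to compute $\Ll_f$ and its spectrum explicitly in this $1\times 1$ case and match the result, element by element, with \eqref{Four}.

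First, for $x\in\mathcal S_X$ we have the scalars $\mathcal B_f=\langle\mathcal Bx,x\rangle=:b$ and $\mathcal A_f=\langle\A x,x\rangle=:a$, so $\Ll_f=\ran\left[\begin{smallmatrix} b\\ a\end{smallmatrix}\right]=\Span\{(b,a)\}\subseteq\C\times\C$. We then split into cases according to $b$ and $a$.
\begin{itemize}
\item If $b\neq 0$, then $\Ll_f$ is the graph of multiplication by $a/b$. Its resolvent set is $\C\setminus\{a/b\}\neq\emptyset$, and $\sigma(\Ll_f)=\{a/b\}$ by Proposition~\ref{prop:finite_spectrum}, since $\det(\lambda b-a)=0$ if and only if $\lambda=a/b$.
\item If $b=0$ and $a\neq 0$, then $\Ll_f=\{0\}\times\C$. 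For every $\lambda\in\C$ we have $\ker(\Ll_f-\lambda)=\{0\}$ and $\ran(\Ll_f-\lambda)=\C$, so $\rho(\Ll_f)=\C\neq\emptyset$. Hence $\sigma(\Ll_f)\setminus\{\infty\}=\emptyset$, and this $f$ contributes only $\infty$.
\item If $a=b=0$, then $\Ll_f=\{(0,0)\}$. Here $\ran(\Ll_f-\lambda)=\{0\}\neq\C$ for every $\lambda$, so $\rho(\Ll_f)=\emptyset$ and $f$ is excluded from the union.
\end{itemize}

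Taking the union over admissible $x$ and removing $\infty$, we get
\[
W^1(\mathcal B,\A)\setminus\{\infty\}=\Bigl\{\tfrac{\langle\A x,x\rangle}{\langle\mathcal Bx,x\rangle}: \|x\|=1,\ \langle\mathcal Bx,x\rangle\neq0\Bigr\}.
\]
The right-hand side coincides with $w(\A,\mathcal B)$ because the quotient in \eqref{Four} is invariant under the scaling $x\mapsto x/\|x\|$ for $x\neq 0$. Note also that $x=0$ is automatically excluded in \eqref{Four}, since it gives $\langle\mathcal Bx,x\rangle=0$.

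The only delicate point is the degenerate case $b=0$. There we have to check carefully, from the definitions of $\rho$ and $\sigma$ for relations given in the preliminaries, that no finite $\lambda$ enters the spectrum when $a\neq 0$, and that the case $a=b=0$ is excluded by the condition $\rho(\Ll_f)\neq\emptyset$. The remaining steps are direct.
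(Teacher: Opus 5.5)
Your proof is correct and takes essentially the paper's approach: you compute the scalar truncated relation $\Ll_f=\Span\{(\langle\mathcal Bf,f\rangle,\langle\A f,f\rangle)\}$ directly and observe that its only finite spectral point is the quotient $\langle\A f,f\rangle/\langle\mathcal Bf,f\rangle$ whenever $\langle\mathcal Bf,f\rangle\neq0$. Your explicit case split, including the case $a=b=0$ excluded by $\rho(\Ll_f)\neq\emptyset$, and your scaling remark for passing between unit vectors and arbitrary $x$ in \eqref{Four} spell out two points that the paper's proof leaves implicit.
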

\begin{pf}
 Let $\lambda \in W^{1}(\mathcal{B},\A)\setminus \{\infty \} $. Then there exists $f\in\mathcal{S}_{X}$ and $\rho( \Ll_f )\neq \emptyset$ such that 
 $$
 \lambda\in\sigma\bigg(\ran
    \begin{bmatrix}
         \langle \mathcal{B}f,f\rangle\\
        \mathcal  \langle \A f,f\rangle
    \end{bmatrix}\bigg).
    $$
    Then there  exists  $z\neq0$ with $(z,\lambda z) \in \Ll_f.$
    Hence,  $z=\langle\mathcal{B}f,f\rangle x,$ $\lambda z=\langle\A f,f\rangle x$, for some 
    $x\in \C$. This gives $\lambda=\frac{\langle\A f,f\rangle}{\langle\mathcal{B} f,f\rangle}$ and hence $\lambda\in w(\mathcal{A}, \mathcal{B})$.

    Let $\lambda \in w(\A,\mathcal{B})$. Then there exist $f\in X$ and $\langle \mathcal{B}f,f\rangle\neq0$ such that $\lambda=\frac{\langle\A f,f\rangle}{\langle\mathcal{B} f,f\rangle}.$ Consider the linear relation $
        \Ll_f= \{(\mathcal{B}_f z, \A_f z) : z\in \C\} $
        with $\mathcal{B}_f=\langle \mathcal{B}f,f\rangle$  and $\mathcal{A}_f=\langle \mathcal{A}f,f\rangle $ and hence  
        \begin{align*}
        \Ll_f= \{(\mathcal{B}_f z, \lambda \mathcal{B}_f z) : z\in \C\},
        \end{align*}  
        which is the graph of the scalar operator $\frac{\langle\A f,f\rangle}{\langle\mathcal{B}f,f\rangle}.$ Consequently, $\Ll_f-\mu$ is the graph of $\frac{\langle\A f,f\rangle}{\langle\mathcal{B}f,f\rangle}-\mu$ which is boundedly invertible if and only if $\frac{\langle\A f,f\rangle}{\langle\mathcal{B}f,f\rangle}\neq\mu$ and hence $\rho(\Ll_f)\neq \emptyset.$ 
        Therefore, 
        \begin{align*}
        \sigma(\Ll_f)=\bigg \{ \frac{\langle\A f,f\rangle}{\langle\mathcal{B}f,f\rangle}\bigg\}.
        \end{align*} 
        Consequently, $\lambda \in W^{1}(\mathcal{B},\A)\setminus \{\infty \}$ follows.
\end{pf}
Note that, in general, $W^1(\mathcal{B}, \A)$ does not recover the numerical range $W(\mathcal{B}, \A)$ for the operators $\mathcal{B}$ and $\A$ from a range representation \eqref{linear relation: range representation} of $\mathcal{L}$. This is illustrated by the following example.
\begin{example}
\label{ex:W-versus-W1}
Consider
\[
 \mathcal B=\begin{bmatrix}1&0\\0&4\end{bmatrix},\qquad
 \mathcal A=\begin{bmatrix}0&1\\0&0\end{bmatrix},\qquad
 \Ll=\ran\begin{bmatrix}\mathcal B\\\mathcal A\end{bmatrix}.
\]
Since $\mathcal B$ is positive definite,
$\Ll=\operatorname{gr}(\mathcal A\mathcal B^{-1})$, where
\[
 \mathcal A\mathcal B^{-1}=\begin{bmatrix}0&\tfrac14\\0&0\end{bmatrix}.
\]
For a unit vector $x=(x_1,x_2)^\top$,
\[
 \langle\mathcal A\mathcal B^{-1}x,x\rangle
 =\frac14x_2\overline{x_1}.
\]
Since $|x_1x_2|\le\tfrac12$, and every modulus between $0$ and
$\tfrac12$ can be obtained with an arbitrary phase,
\begin{equation}
\label{eq:example-W-small-disk}
 W(\mathcal B,\mathcal A)=W(\Ll)
 =\{z\in\C:|z|\le\tfrac18\}.
\end{equation}
On the other hand, Proposition~\ref{prop:scalar-block-range} gives
\[
 W^1(\mathcal B,\mathcal A)\setminus\{\infty\}
 =\left\{\frac{z_2{z_1}^{-1}}
 {1+4r^2}:(z_1,z_2)\neq0\right\}.
\]
With
$r=|z_2|/|z_1|$ and
\[
 \frac{r}{1+4r^2}\le\frac14,
\]
with equality at $r=\tfrac12$. The continuous function on the left assumes
every value in $[0,\tfrac14]$. Therefore
\begin{equation}
\label{eq:example-W1-large-disk}
 W^1(\mathcal B,\mathcal A)\setminus\{\infty\}
 =\{z\in\C:|z|\le\tfrac14\}.
\end{equation}
Moreover, $\infty\notin W^1(\mathcal B,\mathcal A)$ because
$\langle\mathcal Bf,f\rangle>0$ for every nonzero $f$. Thus
\eqref{eq:example-W-small-disk} and \eqref{eq:example-W1-large-disk} give a strict inclusion.
\end{example}

\section{Spectral inclusion}\label{Spectral_inclusion_via range representation}

In this section, we establish  spectral inclusion results for the block numerical range of the linear relation $\Ll$ in \eqref{linear relation: range representation}. 
The following result was established in \cite{radl2014block} for analytic operator functions. For the convenience of the reader, we include a proof adapted to our setting. 
\begin{lemma}\label{Equivalence: linear relation_Pencil}
    Let $\Ll$ be a closed linear relation in $X$ given by  \eqref{linear relation: range representation} and $\lambda \in \C$. Then \begin{align}\label{First implication}
        \lambda \in \overline{W^n(\mathcal{B}, \A)} \Longrightarrow 0 \in  \overline{W^n(\lambda \mathcal{B}-\A)}.
    \end{align}
    
    If, in addition, $0\notin \overline{W^n(\mathcal{B})}$, then
    \begin{align}\label{Equivalence}
        \lambda \in \overline{W^n(\mathcal{B}, \A)} \iff  0 \in  \overline{W^n(\lambda \mathcal{B}-\A)}.
    \end{align}
    \end{lemma}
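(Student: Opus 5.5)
For \eqref{First implication} I would pass to the limit in Proposition~\ref{Perfekt}. Let $\lambda\in\overline{W^n(\mathcal B,\A)}$ with $\lambda\in\C$. Then there are $\lambda_k\in W^n(\mathcal B,\A)\setminus\{\infty\}$ with $\lambda_k\to\lambda$. By \eqref{Linear relation/Pencil} there are $f^{(k)}\in\prod_{i=1}^n\Ss_{X_i}$ with $\det(\lambda_k\mathcal B_{f^{(k)}}-\A_{f^{(k)}})=0$. Set
\[
\lambda\mathcal B_{f^{(k)}}-\A_{f^{(k)}}=(\lambda_k\mathcal B_{f^{(k)}}-\A_{f^{(k)}})+(\lambda-\lambda_k)\mathcal B_{f^{(k)}} .
\]
Since $F_f$ is an isometry, $\|\mathcal B_{f}\|\le\|\mathcal B\|$ uniformly in $f$. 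The matrix $\lambda\mathcal B_{f^{(k)}}-\A_{f^{(k)}}$ is therefore an $O(|\lambda-\lambda_k|)$-perturbation of a singular matrix. Its entries are bounded uniformly in $k$ by $|\lambda|\|\mathcal B\|+\|\A\|$, so by continuity of eigenvalues its smallest eigenvalue in modulus tends to $0$. Concretely, I would use that the product of the eigenvalues equals the determinant. The determinant is a polynomial in the entries, hence uniformly Lipschitz on bounded sets, so $|\det(\lambda\mathcal B_{f^{(k)}}-\A_{f^{(k)}})|\le C|\lambda-\lambda_k|$. All eigenvalues are bounded by $|\lambda|\|\mathcal B\|+\|\A\|$, but this alone does not force a small eigenvalue. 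A small determinant with bounded eigenvalues forces $\min_j|\mu_j|^n\le|\det|\cdot$ nothing, which does not work directly. So I would use instead that $\min_j|\mu_j|\le|\det|^{1/n}$ because $|\det|=\prod|\mu_j|\ge\min|\mu_j|^n$. This gives eigenvalues $\mu_k\in\sigma(\lambda\mathcal B_{f^{(k)}}-\A_{f^{(k)}})\subseteq W^n(\lambda\mathcal B-\A)$ with $\mu_k\to0$, hence $0\in\overline{W^n(\lambda\mathcal B-\A)}$.

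For the converse in \eqref{Equivalence}, assume $0\notin\overline{W^n(\mathcal B)}$ and $0\in\overline{W^n(\lambda\mathcal B-\A)}$. Choose $f^{(k)}$ and $\mu_k\to0$ with $\det(\lambda\mathcal B_{f^{(k)}}-\A_{f^{(k)}}-\mu_k I)=0$. The goal is to produce $\lambda_k\to\lambda$ with $\det(\lambda_k\mathcal B_{f^{(k)}}-\A_{f^{(k)}})=0$. Since $0\notin\overline{W^n(\mathcal B)}$, the argument from the proof of Proposition~\ref{Perfekt} (via \cite[Lemma 6.3]{radl2014block}) shows $\rho(\Ll_{f})\neq\emptyset$ for every $f$. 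So any such $\lambda_k$ lies in $W^n(\mathcal B,\A)$, and then $\lambda\in\overline{W^n(\mathcal B,\A)}$.

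To find $\lambda_k$, I would exploit $0\notin\overline{W^n(\mathcal B)}$ quantitatively. With $\delta:=\operatorname{dist}(0,W^n(\mathcal B))>0$, every $\mathcal B_f$ is invertible with spectrum outside the disc of radius $\delta$. I would need a uniform bound $\|\mathcal B_f^{-1}\|\le C$, and this is the main obstacle. Spectral distance alone does not bound the inverse of a non-normal matrix. I would try to obtain it from the structure of $W^n$: the $W^{m}$-ranges for coarser decompositions satisfy inclusions $W^{m}\supseteq W^n$ in the wrong direction, so instead I would argue by compactness. If $\|\mathcal B_{f^{(k)}}^{-1}\|\to\infty$, there are unit $c_k$ with $\mathcal B_{f^{(k)}}c_k\to0$. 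In the $n\times n$ setting, $\det\mathcal B_{f^{(k)}}\to0$ because the entries are bounded. Then $\min|\sigma(\mathcal B_{f^{(k)}})|\to0$ by the same eigenvalue/determinant argument, contradicting $\delta>0$.

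With $\|\mathcal B_f^{-1}\|\le C$, rewrite the condition as $0\in\sigma(\lambda-\mathcal B_f^{-1}\A_f-\mu_k\mathcal B_f^{-1})$. The eigenvalues of $\mathcal B_f^{-1}\A_f$ are exactly $\sigma(\Ll_f)$, and they move by at most $O(\|\mu_k\mathcal B_f^{-1}\|^{1/n})$ with uniform constants (Ostrowski–Elsner bound). Hence some $\lambda_k\in\sigma(\Ll_{f^{(k)}})$ satisfies $|\lambda_k-\lambda|\le C'|\mu_k|^{1/n}\to0$, which finishes the proof.
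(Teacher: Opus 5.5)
Your argument is correct. For \eqref{First implication} it follows the paper's scheme. Both take $\lambda_k\to\lambda$ and $f^{(k)}$ with $\det(\lambda_k\mathcal B_{f^{(k)}}-\A_{f^{(k)}})=0$, and use $\|\mathcal B_f\|\le\|\mathcal B\|$ to make $\det(\lambda\mathcal B_{f^{(k)}}-\A_{f^{(k)}})$ small. The paper gets there via the minimum modulus and the singular value bound $|\det M|\le\gamma(M)\|M\|^{n-1}$, and then invokes \cite[Lemma 4.3]{radl2014block}. You use Lipschitz continuity of the determinant and finish with $\min_j|\mu_j|\le|\det|^{1/n}$, which makes this half self-contained.

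The real difference is in the converse, where the paper gives no argument of its own and only points to the proof of \cite[Lemma 4.5]{radl2014block}. Your explicit route has two steps. First, a uniform bound on $\|\mathcal B_f^{-1}\|$ follows from $0\notin\overline{W^n(\mathcal B)}$; this is valid because $|\det M|\le\gamma(M)\|M\|^{n-1}$ and $\sigma(\mathcal B_f)\subseteq W^n(\mathcal B)$. Second, Elsner's bound is applied to $\mathcal B_f^{-1}\A_f$ versus $\mathcal B_f^{-1}\A_f+\mu_k\mathcal B_f^{-1}$, all of uniformly bounded norm. This is a complete proof and shows exactly where the hypothesis enters.

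You can shorten it by reusing your first-part trick. Every eigenvalue of $\mathcal B_f$ has modulus at least $\delta$, so $|\det\mathcal B_f|\ge\delta^n$. Write $\det(z\mathcal B_{f^{(k)}}-\A_{f^{(k)}})=\det\mathcal B_{f^{(k)}}\prod_{j}(z-\lambda_{k,j})$, where the $\lambda_{k,j}$ are the points of $\sigma(\Ll_{f^{(k)}})$. The Lipschitz estimate gives $|\det(\lambda\mathcal B_{f^{(k)}}-\A_{f^{(k)}})|\le C|\mu_k|$. Hence $\min_j|\lambda-\lambda_{k,j}|^n\le C|\mu_k|\delta^{-n}$, and you need neither the inverse bound nor Elsner's theorem.
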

\begin{pf}  
 Assume that $\lambda \in \overline{W^n(\mathcal{B}, \A)}.$ Then there exists a sequence $(\lambda_k)\subset W^n(\mathcal{B}, \A)$ such that $\lambda_k \rightarrow \lambda, k\rightarrow \infty$. By \eqref{Linear relation/Pencil}, we obtain that $$0\in W^n(\lambda_k \mathcal{B}-\A), $$ and hence there exists $f^k\in \prod_{i=1}^n\mathcal{S}_{X_i} $ such that $\det((\lambda_k \mathcal{B}-\A)_{f^k})=0$. Thus, $(\lambda_k \mathcal{B}-\A)_{f^k}$ is not invertible, then there exists a nonzero vector $h$ such that $(\lambda_k \mathcal{B}-\A)_{f^k}h=0,$ with $\|h\|=1.$ Therefore, 
 the minimum modulus of $(\lambda_k \mathcal{B}-\A)_{f^k}$ (cf.\ \cite{zbMATH03239038}) satisfies
 $$
 \gamma((\lambda_k \mathcal{B}-\A)_{f^k})= \inf_{\|x\|=1}\|(\lambda_k \mathcal{B}-\A)_{f^k}x\|=0.
 $$
  From Definition \ref{definition:truncated_LR}, we have $\mathcal{B}_f= F_f^* \mathcal{B} F_f$ and since $F_f$ is an isometry, it follows that \begin{align}\label{Bounded_Norm}
    \|\mathcal{B}_f\| \le \|\mathcal{B}\|, \quad \text{for} \
    f\in \prod_{i=1}^n\mathcal{S}_{X_i}\end{align} and therefore, \begin{eqnarray*}
      \|(\lambda_k \mathcal{B}_{f^k}-\A_{f^k})-(\lambda \mathcal{B}_{f^k}-\A_{f^k})\|&=& \|(\lambda_k-\lambda) \mathcal{B}_{f^k}\|\le |\lambda_k-\lambda| \|\mathcal{B}\|\rightarrow 0, \quad k\rightarrow \infty.
      \end{eqnarray*}
This implies that
\begin{eqnarray*}
\gamma((\lambda \mathcal{B}-\A)_{f^k})&=&| 
\gamma((\lambda \mathcal{B}-\A)_{f^k})-
\gamma((\lambda_k \mathcal{B}-\A)_{f^k})| \le \|
(\lambda_k \mathcal{B}-\A)_{f^k}-(\lambda \mathcal{B}-\A)_{f^k})
\|\rightarrow 0, \quad k\rightarrow \infty. 
\end{eqnarray*} 
Using the singular-value decomposition $(\lambda 
\mathcal{B}-\A)_{f^k}$ there exist unitary 
$U,V\in\mathbb{C}^{n\times n}$ such that  
$U^*(\lambda \mathcal{B}-\A)_{f^k}V=\operatorname{diag}(\sigma_1^k,\sigma_2^k,\ldots,\sigma_n^k)$ for some 
\[
\sigma_1^k=\|(\lambda \mathcal{B}-\A)_{f^k}\|\ge \ldots\ge \sigma_n^k=\gamma((\lambda \mathcal{B}-\A)_{f^k})\ge  0.
\] 
We estimate
\begin{align*}
\left|\det((\lambda \mathcal{B}-\A)_{f^k})\right|&
=\left|\det(U^*(\lambda \mathcal{B}-\A)_{f^k}V)\right|
=\sigma_1^k\sigma_2^k\cdot\ldots\cdot \sigma_n^k\le
\gamma((\lambda \mathcal{B}-\A)_{f^k})
\|(\lambda \mathcal{B}-\A)_{f^k}\|^{n-1}.
\end{align*}
By the same argument as in \eqref{Bounded_Norm}, it follows that $\|(\lambda \mathcal{B}-\A)_{f^k}\|\le \|(\lambda \mathcal{B}-\A)\|$. Consequently,  
$$
\left|\det((\lambda \mathcal{B}-\A)_{f^k})\right| \rightarrow 0, \quad k\rightarrow \infty.
$$ 
 Hence, by \cite[Lemma 4.3]{radl2014block}, this implies that $0 \in  \overline{W^n(\lambda \mathcal{B}-\A)}.$ This proves \eqref{First implication}. 
 
 In order to show  \eqref{Equivalence}, assume that $0\notin \overline{W^n(\mathcal{B})}$. Let $\lambda \in \C$ be such that 0 $\in  \overline{W^n(\lambda \mathcal{B}-\A)}$. It follows that $\lambda \in \overline{W^n(\mathcal{B}, \A)}$ by applying similar
arguments as in the corresponding implication of the proof of \cite[Lemma 4.5]{radl2014block}, after replacing the operator function $\Ll(\lambda)$ with $\lambda \mathcal{B}-\A$ and \eqref{Linear relation/Pencil*}. However, the condition $0\notin \overline{W^n(\mathcal{B})}$ is sufficient for the required assumption in terms of the leading coefficient.   \end{pf}

\begin{proposition}
\label{remark3}
    Let $\Ll$ be a closed linear relation in $X$ given by  \eqref{linear relation: range representation}. Then the following hold.
    \begin{enumerate}[(a)]
    \item
    If $0\notin \overline{W^n(\mathcal{B})}$, then $W^{n}(\mathcal{B},\A)$ is bounded.
    \item If $\infty\in\overline{W^n(\mathcal{B}, \A)}$ then $0 \in  \overline{W^n(\mathcal{B})}$.
    \end{enumerate}
    \end{proposition}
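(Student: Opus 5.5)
Part (a) is a direct estimate, and part (b) follows from (a) by contraposition.

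For (a), assume $0\notin\overline{W^n(\mathcal B)}$. Take $\lambda\in W^n(\mathcal B,\A)\setminus\{\infty\}$. By Proposition~\ref{Perfekt}, $0\in W^n(\lambda\mathcal B-\A)$, so there is $f\in\prod_{i=1}^n\mathcal S_{X_i}$ with $\det(\lambda\mathcal B_f-\A_f)=0$.

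The key step is a uniform lower bound on $\mathcal B_f$. Since $W^n(\mathcal B)=\bigcup_f\sigma(\mathcal B_f)$ and $0\notin\overline{W^n(\mathcal B)}$, there is $\delta>0$ such that every eigenvalue of every $\mathcal B_f$ satisfies $|\mu|\geq\delta$. In particular each $\mathcal B_f$ is invertible with $|\det\mathcal B_f|\geq\delta^n$.

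The obstacle is that eigenvalue bounds alone do not control $\|\mathcal B_f^{-1}\|$ for non-normal matrices. The determinant repairs this. Write $\mathcal B_f^{-1}=\operatorname{adj}(\mathcal B_f)/\det\mathcal B_f$ and use $\|\operatorname{adj}(\mathcal B_f)\|\leq\|\mathcal B_f\|^{n-1}\leq\|\mathcal B\|^{n-1}$, which follows from the singular values as in the proof of Lemma~\ref{Equivalence: linear relation_Pencil}, together with \eqref{Bounded_Norm}. This gives the uniform bound
\[
\|\mathcal B_f^{-1}\|\leq \|\mathcal B\|^{n-1}\delta^{-n}=:C .
\]

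Then $\det(\lambda I-\mathcal B_f^{-1}\A_f)=0$, so $\lambda$ is an eigenvalue of $\mathcal B_f^{-1}\A_f$. Hence
\[
|\lambda|\leq\|\mathcal B_f^{-1}\A_f\|\leq C\|\A\|,
\]
using $\|\A_f\|\leq\|\A\|$. Moreover, no $\mathcal B_f$ is singular, so $\infty\notin W^n(\mathcal B,\A)$. Therefore $W^n(\mathcal B,\A)$ is contained in the disc of radius $C\|\A\|$ and is bounded.

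For (b), I argue by contraposition. If $0\notin\overline{W^n(\mathcal B)}$, then by (a) the set $W^n(\mathcal B,\A)$ is bounded and does not contain $\infty$. Its closure in $\C\cup\{\infty\}$ therefore stays inside a bounded disc, so $\infty\notin\overline{W^n(\mathcal B,\A)}$.

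Alternatively, one could use \eqref{infinity_inverse} and apply (a) to the inverse representation. The direct route above is simpler and is the one I would follow.
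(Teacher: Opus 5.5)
Your proof is correct. Part (b) is exactly the paper's contrapositive argument, but part (a) takes a genuinely different route.

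The paper argues qualitatively. It invokes \cite[Lemma 6.3]{radl2014block} to get $0\notin\overline{W^n(\lambda\mathcal B-\A)}$ for all sufficiently large $|\lambda|$. It then uses the implication \eqref{First implication} of Lemma~\ref{Equivalence: linear relation_Pencil}, whose own proof relies on \cite[Lemma 4.3]{radl2014block}, to conclude that $\lambda\notin\overline{W^n(\mathcal B,\A)}$ for such $\lambda$.

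You instead bound the set directly. The gap $\delta=\operatorname{dist}(0,W^n(\mathcal B))>0$ gives $|\det\mathcal B_f|\geq\delta^n$. The singular-value estimate $\|\operatorname{adj}\mathcal B_f\|\leq\|\mathcal B_f\|^{n-1}\leq\|\mathcal B\|^{n-1}$ turns this into a uniform bound on $\|\mathcal B_f^{-1}\|$. Every finite point of $W^n(\mathcal B,\A)$ is then an eigenvalue of $\mathcal B_f^{-1}\A_f$.

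What your route buys:
\begin{enumerate}
\item It is self-contained and needs no external lemma.
\item It gives the explicit radius $\|\mathcal B\|^{n-1}\|\A\|\,\delta^{-n}$.
\item It states explicitly that $\infty\notin W^n(\mathcal B,\A)$, because every $\mathcal B_f$ is invertible. The paper leaves this implicit when, in (b), it passes from \emph{bounded} to $\infty\notin\overline{W^n(\mathcal B,\A)}$.
\end{enumerate}
The paper's route, in exchange, is shorter on the page and reuses the pencil machinery it needs anyway for Theorem~\ref{spectral_enclosures_pencils}.
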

    
    \begin{pf}
        \begin{enumerate}[(a)]
    \item Since $0\notin \overline{W^n(\mathcal{B})}$, it follows from \cite[Lemma 6.3]{radl2014block} that $0\notin \overline{W^{n}(\lambda \mathcal{B}-\A)}$ for large values of $\lvert\lambda\rvert$. Hence, by \eqref{First implication}, $\lambda\notin \overline{W^{n}(\mathcal{B},\A)}$ for large values of $\lvert\lambda\rvert$. This implies that $W^{n}(\mathcal{B},\A)$ is bounded.
    \item Assume $\infty\in\overline{W^n(\mathcal{B}, \A)}$ and suppose that $0\notin\overline{W^n(\mathcal{B})}$. Hence, by (a), we obtain that  $W^n(\mathcal{B}, \A)$ is bounded, but then $\infty\notin\overline{W^n(\mathcal{B}, \A)}$ which is the desired contradiction. Therefore, $0\in \overline{W^n(\mathcal{B})}.$ 
    \end{enumerate}
        \end{pf}

We define $\sigma_p(\Ll^*)^*:=\{\lambda\in \C : \overline{\lambda}\in \sigma_p(\Ll^*) \}$ for the complex conjugate of $\sigma_p(\Ll^*)$.
\begin{proposition}\label{spectrum LR_implication}
Let $\Ll$ be a closed linear relation in $X$ given by  \eqref{linear relation: range representation}. Then
     \begin{align*}
   \sigma(\Ll)=\sigma_{\ap}(\Ll)\cup \sigma_p(\mathcal{L}^*)^*. 
\end{align*}
\end{proposition}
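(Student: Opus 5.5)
The plan is to prove the two inclusions separately for finite $\lambda\in\C$, using only the definitions of resolvent point, approximate point spectrum and adjoint. The point at $\infty$ is then handled by passing to $\Ll^{-1}$. The basic tool is the orthogonality identity
\[
  \bigl(\ran(\Ll-\lambda)\bigr)^{\perp}=\ker(\Ll^*-\overline{\lambda}).
\]
To verify it, take $y\in X$ and $(x,x')\in\Ll$. Then $\langle x'-\lambda x,y\rangle=\langle x',y\rangle-\langle x,\overline{\lambda}y\rangle$. This vanishes for all $(x,x')\in\Ll$ exactly when $(y,\overline{\lambda}y)\in\Ll^*$, by the definition of the adjoint. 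Hence $\lambda\in\sigma_p(\Ll^*)^*$ if and only if $\ran(\Ll-\lambda)$ is not dense in $X$.

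\emph{Inclusion $\supseteq$.} Suppose first that $\lambda\in\sigma_p(\Ll^*)^*$. By the identity above, $\ran(\Ll-\lambda)\neq X$, so $\lambda\notin\rho(\Ll)$. Suppose next that $\lambda\in\sigma_{\ap}(\Ll)$ but, for contradiction, $\lambda\in\rho(\Ll)$. Then $(\Ll-\lambda)^{-1}$ is a bounded everywhere defined operator, as recalled in Section~\ref{sec:preliminaries}. Take the pairs $(x_k,\tilde x_k)\in\Ll$ from the definition of $\sigma_{\ap}(\Ll)$, with $\|x_k\|=1$. Since $(\tilde x_k-\lambda x_k,x_k)\in(\Ll-\lambda)^{-1}$, we get $x_k=(\Ll-\lambda)^{-1}(\tilde x_k-\lambda x_k)\to 0$. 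This contradicts $\|x_k\|=1$.

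\emph{Inclusion $\subseteq$.} Let $\lambda\notin\sigma_{\ap}(\Ll)\cup\sigma_p(\Ll^*)^*$. Negating the definition of $\sigma_{\ap}$ gives a constant $c>0$ with
\[
  \|y-\lambda x\|\geq c\|x\| \qquad \text{for all } (x,y)\in\Ll .
\]
This bound immediately gives $\ker(\Ll-\lambda)=\{0\}$. Next I show that $\ran(\Ll-\lambda)$ is closed. Let $y_k-\lambda x_k\to z$ with $(x_k,y_k)\in\Ll$. The bound makes $(x_k)$ Cauchy, so $x_k\to x$, and then $y_k\to z+\lambda x$. Closedness of $\Ll$ gives $(x,z+\lambda x)\in\Ll$, hence $z\in\ran(\Ll-\lambda)$. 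Finally, since $\lambda\notin\sigma_p(\Ll^*)^*$, the orthogonality identity shows the range is dense. Being closed and dense, it equals $X$, so $\lambda\in\rho(\Ll)$.

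\emph{The point $\infty$.} This is where I expect the main care to be needed, because of the conventions. By definition, $\infty\in\sigma_{\ap}(\Ll)$ means $0\in\sigma_{\ap}(\Ll^{-1})$, and $\infty\in\rho(\Ll)$ means $\mul\Ll=\{0\}$ and $\dom\Ll=X$, i.e.\ $0\in\rho(\Ll^{-1})$. Also $\infty\in\sigma_p(\Ll^*)$ exactly when $\mul\Ll^*\neq\{0\}$, i.e.\ when $0\in\sigma_p((\Ll^*)^{-1})$. I would check directly from the definitions that $(\Ll^{-1})^*=(\Ll^*)^{-1}$. Then the finite-point argument applied to the closed relation $\Ll^{-1}$ at $\lambda=0$ yields the statement at $\infty$. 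The one thing to confirm is that the finite argument at $\lambda=0$ used nothing beyond closedness, which holds, since $\Ll^{-1}$ is closed whenever $\Ll$ is.
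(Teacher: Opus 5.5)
Your proof is correct. For finite $\lambda$ it has the same skeleton as the paper's proof: bounded below gives closed range, and trivial adjoint kernel gives dense range, hence surjectivity. The difference is that you run this intrinsically on $\Ll$, whereas the paper passes to the pencil. The paper asserts that $\lambda\notin\sigma_{\ap}(\Ll)$ and $\lambda\notin\sigma_p(\Ll)$ make $\lambda\mathcal{B}-\A$ bounded below and injective. It then identifies $\overline{\lambda}\notin\sigma_p(\Ll^*)$ with injectivity of $\overline{\lambda}\mathcal{B}^*-\A^*$. For the reverse inclusion it cites $\lambda\in\rho(\Ll)\iff\overline{\lambda}\in\rho(\Ll^*)$ and takes $\sigma_{\ap}(\Ll)\subseteq\sigma(\Ll)$ for granted.

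Your route gains three things.
\begin{itemize}
\item It never uses the range representation, so it holds for every closed relation.
\item It is self-contained. The single identity $(\ran(\Ll-\lambda))^{\perp}=\ker(\Ll^*-\overline{\lambda})$ drives both inclusions, and you actually prove $\sigma_{\ap}(\Ll)\subseteq\sigma(\Ll)$.
\item It gets closedness of $\ran(\Ll-\lambda)$ directly from closedness of $\Ll$. This sidesteps the paper's intermediate claim that the pencil is bounded below and injective, which can fail when $\ker\A\cap\ker\mathcal{B}\neq\{0\}$. For example, on $X=\ell^2$ with $\mathcal{B}$ the backward shift and $\A=0$, one has $\Ll=X\times\{0\}$, so every $\lambda\neq0$ is a resolvent point although $\lambda\mathcal{B}-\A$ has a kernel. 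The paper's conclusion is still right, because only $\ran(\Ll-\lambda)=\ran(\lambda\mathcal{B}-\A)$ and $\ker(\Ll-\lambda)=\mathcal{B}\ker(\lambda\mathcal{B}-\A)$ actually matter.
\end{itemize}
The paper's version is shorter given the citation and stays in the pencil language it uses elsewhere.

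The paper's proof only treats $\lambda\in\C$. Your reduction of $\infty$ to the point $0$ for $\Ll^{-1}$, via $(\Ll^{-1})^*=(\Ll^*)^{-1}$, is sound. However, you should say explicitly that you read $\infty\in\sigma_p(\Ll^*)^*$ as $\mul\Ll^*\neq\{0\}$, i.e.\ with $\overline{\infty}=\infty$. The paper defines $\sigma_p(\Ll^*)^*$ as a subset of $\C$, and under that literal reading the identity fails at $\infty$. Take $\mathcal{B}=\A=P_Y$ with $Y\neq X$ closed: then $\infty\in\sigma(\Ll)$ since $\dom\Ll=Y$, yet $\infty\notin\sigma_{\ap}(\Ll)$.
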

\begin{pf}
 Let $\lambda \in \C\backslash (\sigma_{\ap}(\Ll)\cup \sigma_p(\mathcal{L}^*)^*)$. Then, $\lambda \notin \sigma_{\ap}(\Ll)$ and $\overline{\lambda}\notin \sigma_p(\Ll^*)$. We have $\lambda \notin \sigma_{\ap}(\Ll)$, which gives that $\lambda \mathcal{B}-\A$ is bounded below, hence its range  $\ran (\lambda \mathcal{B}-\A)$ is closed and $\lambda\notin \sigma_p(\Ll),$ this implies that $\lambda \mathcal{B}-\A$ is injective. Since $\overline{\lambda}\notin \sigma_p(\Ll^*)$, it follows that $(\lambda \mathcal{B}-\A)^*=\overline{\lambda} \mathcal{B}^*-\A^*$ is injective and therefore $$ \overline{\ran(\lambda \mathcal{B}-\A)}=\ker((\lambda \mathcal{B}-\A)^*)^{\perp}= X.$$
Altogether we obtain  $ \ran(\lambda \mathcal{B}-\A)=\overline{\ran(\lambda \mathcal{B}-\A)}=\ker((\lambda \mathcal{B}-\A)^*)^{\perp}= X.$ Therefore, $\lambda \in \rho(\Ll),$ which implies $\lambda \in \C\backslash \sigma(\Ll).$ This proves the inclusion. 

 Since $\sigma_{\ap}(\Ll) \subseteq \sigma (\Ll)$, it remains to prove that $\sigma_p(\mathcal{L}^*)^*\subseteq \sigma(\mathcal{L})$. Let $\lambda \in \sigma_p(\mathcal{L}^*)^*$, then $\overline{\lambda} \in \sigma_p(\mathcal{L}^*)$. Assume, for contradiction, that  $\lambda \notin\sigma(\mathcal{L})$. Thus, $\lambda \in\rho(\mathcal{L})$ which by \cite[Proposition 1.3.10]{zbMATH07170295} is equivalent to $\overline{\lambda} \in\rho(\mathcal{L}^*)$. This implies that, $\overline{\lambda} \notin \sigma_p(\mathcal{L}^*)$ which is a contradiction. Therefore, $\lambda \in \sigma(\Ll).$ 
\end{pf}

Our first main theorem is the following.    \begin{theorem}\label{spectral_enclosures_pencils}
	Let $\Ll$ be a closed linear relation in $X$ given by  \eqref{linear relation: range representation}. If $0\notin \overline{W^n(\mathcal{B})}$  then the following statements hold.
     \begin{enumerate}[(a)]
    \item
     $\sigma_{p}(\Ll)\subseteq W^{n}(\mathcal{B}, \A)$. 
    \item
     $\sigma(\Ll) \subseteq \overline{W^{n}(\mathcal{B}, \A)}.$
    \item
        If $X = \widetilde{X}_1 \times \cdots \times \widetilde{X}_{m}$ is a refinement of $X = X_1 \times \cdots \times X_n$, then $$W^{m}(\mathcal{B}, \A)\subseteq W^n(\mathcal{B}, \A), \  m\geq  n\geq 1.$$
	\end{enumerate}
\end{theorem}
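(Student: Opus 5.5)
The plan is to reduce everything to the operator pencil $\lambda\mathcal B-\A$ and use the known block numerical range results \eqref{5a} for bounded operators. The bridge is Proposition~\ref{Perfekt}: under $0\notin\overline{W^n(\mathcal B)}$, \eqref{Linear relation/Pencil*} gives $W^n(\mathcal B,\A)\setminus\{\infty\}=\{\lambda\in\C: 0\in W^n(\lambda\mathcal B-\A)\}$. Likewise, Lemma~\ref{Equivalence: linear relation_Pencil} gives the closure version \eqref{Equivalence}.

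For (a), I first note that $\infty\notin\sigma_p(\Ll)$. Indeed, $(0,y)\in\Ll$ means $\mathcal Bz=0$ and $\A z=y$. A nonzero $z$ with $\mathcal Bz=0$ forces $\langle \mathcal B_f c,c\rangle$-type degeneracy: choosing $f_i=P_{X_i}z/\|P_{X_i}z\|$ (padded arbitrarily where $P_{X_i}z=0$) gives $\mathcal B_f c=0$ for $c_i=\|P_{X_i}z\|$, so $0\in W^n(\mathcal B)$, contradicting the hypothesis. Hence $\mul\Ll=\{0\}$ (if $z\neq0$; if $z=0$ then $y=0$ trivially).

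Now take $\lambda\in\sigma_p(\Ll)\cap\C$. Then there is $(x,\lambda x)\in\Ll$ with $x\neq0$, so $x=\mathcal Bz$ and $\lambda\mathcal Bz=\A z$ with $z\neq0$. Thus $0\in\sigma_p(\lambda\mathcal B-\A)\subseteq W^n(\lambda\mathcal B-\A)$ by \eqref{5a}, and \eqref{Linear relation/Pencil*} yields $\lambda\in W^n(\mathcal B,\A)$.

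For (b), I use Proposition~\ref{spectrum LR_implication}: $\sigma(\Ll)=\sigma_{\ap}(\Ll)\cup\sigma_p(\Ll^*)^*$.

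\emph{Approximate point spectrum.} Let $\lambda\in\sigma_{\ap}(\Ll)\cap\C$. Then there are $z_k$ with $\|\mathcal Bz_k\|=1$ and $\|(\A-\lambda\mathcal B)z_k\|\to0$. Since $\|z_k\|\ge 1/\|\mathcal B\|$, normalizing gives $0\in\sigma_{\ap}(\lambda\mathcal B-\A)\subseteq\sigma(\lambda\mathcal B-\A)\subseteq\overline{W^n(\lambda\mathcal B-\A)}$ by \eqref{5a}. Then \eqref{Equivalence} gives $\lambda\in\overline{W^n(\mathcal B,\A)}$.

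\emph{Adjoint point spectrum.} If $\overline\lambda\in\sigma_p(\Ll^*)$, the proof of Proposition~\ref{spectrum LR_implication} shows this amounts to $\ker(\overline\lambda\mathcal B^*-\A^*)\neq\{0\}$. So $0\in\sigma(\lambda\mathcal B-\A)$, and the same argument applies.

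The case $\lambda=\infty$ needs separate treatment. Here $\infty\in\sigma(\Ll)$ would force, via Proposition~\ref{remark3}(b) together with the boundedness argument, that $\mathcal B$ is not boundedly invertible, i.e.\ $0\in\sigma(\mathcal B)\subseteq\overline{W^n(\mathcal B)}$, which is excluded. So we only need to handle finite $\lambda$.

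The main obstacle is relating $\sigma_p(\Ll^*)$ to the pencil adjoint. For this I would invoke Proposition~\ref{Remark: Range_Kernel}, $\Ll^*=\ker[\A^*,-\mathcal B^*]$, and then read off $\overline\lambda\in\sigma_p(\Ll^*)$ iff there is $(u,\overline\lambda u)$ with $\A^*u=\overline\lambda\mathcal B^*u$, $u\neq0$.

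For (c), the refinement argument is standard (as in \cite{TretW03}). Given $\tilde f$ for the finer decomposition, with each $X_i$ a sum of some $\widetilde X_j$, and $\lambda\in\sigma(\Ll_{\tilde f})$ with $\rho(\Ll_{\tilde f})\neq\emptyset$, take a kernel vector $c$ of $\lambda\mathcal B_{\tilde f}-\A_{\tilde f}$. Set $f_i$ to be the normalized sum $\sum_{j\in J_i}c_j\tilde f_j$ (arbitrary if zero), and let $d_i$ be its norm. Then $(\lambda\mathcal B-\A)_f d=0$, since $F_fd=F_{\tilde f}c$ and $F_f^*$ is a compression of $F_{\tilde f}^*$. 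Regularity of $\Ll_f$ is automatic because $0\notin\overline{W^n(\mathcal B)}$, as in the proof of Proposition~\ref{Perfekt}. The case $\infty$ does not occur, since $\mathcal B_{\tilde f}$ is invertible because $0\notin W^m(\mathcal B)\subseteq W^n(\mathcal B)$.
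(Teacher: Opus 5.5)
Your parts (a) and the $\sigma_{\ap}$-half of (b) follow the paper's argument: reduce to $\lambda\mathcal B-\A$, apply \eqref{5a}, then Proposition~\ref{Perfekt} or the closure equivalence of Lemma~\ref{Equivalence: linear relation_Pencil}. Your direct proof of $\mul\Ll=\{0\}$ via a compression with $F_fc=z$ is fine.

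You deviate in two places.
\begin{itemize}
\item For $\overline\lambda\in\sigma_p(\Ll^*)$, you read off $\ker(\overline\lambda\mathcal B^*-\A^*)\neq\{0\}$, hence $0\in\sigma(\lambda\mathcal B-\A)\subseteq\overline{W^n(\lambda\mathcal B-\A)}$, and close with the same equivalence. The paper instead applies (a) to the adjoint data and uses $W^n(\mathcal B,\A)=W^n(\mathcal B^*,\A^*)^*$. Your route stays with a single pencil. It avoids checking $0\notin\overline{W^n(\mathcal B^*)}$ and identifying the kernel-form $\Ll^*$ with $\ran\left[\begin{smallmatrix}\mathcal B^*\\ \A^*\end{smallmatrix}\right]$. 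The paper's route gives the sharper $\sigma_p(\Ll^*)^*\subseteq W^n(\mathcal B,\A)$, without closure.
\item In (c), the paper transports \cite[Theorem 3.5]{TretW03} through the pencil characterization of Proposition~\ref{Perfekt}. You instead reprove the refinement inclusion directly on compressed pencils.
\end{itemize}

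\textbf{A step in (c) fails as written.} You choose $f_i$ \emph{arbitrarily} in $\mathcal S_{X_i}$ when $d_i=0$. But you only know that $w:=(\lambda\mathcal B-\A)F_{\tilde f}c$ is orthogonal to every $\tilde f_j$, not that $w=0$. So the $i$-th entry $\langle w,f_i\rangle$ of $(\lambda\mathcal B-\A)_fd$ need not vanish for an arbitrary $f_i$.

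For example, refine $X_i=\Span\{e_2,e_3,e_4\}$ into $\Span\{e_2,e_3\}\oplus\Span\{e_4\}$, with $\tilde f$-components $e_2,e_4$. If $w$ has a nonzero $e_3$-component, the choice $f_i=e_3$ breaks the argument.

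The fix is to take $f_i\in\Span\{\tilde f_j:j\in J_i\}$ in every case, e.g.\ $f_i=\tilde f_j$ for some $j\in J_i$. Then every $f_i$ is a combination of the $\tilde f_j$, and your claim $(\lambda\mathcal B-\A)_fd=0$ holds. Arbitrary padding was harmless in (a) only because there $\mathcal Bz=0$ exactly, rather than merely after compression.

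\textbf{The case $\infty\in\sigma(\Ll)$ cites the wrong result.} Proposition~\ref{remark3}(b) concerns $\infty\in\overline{W^n(\mathcal B,\A)}$, not $\infty\in\sigma(\Ll)$. What you need is elementary and is the paper's argument:
\begin{itemize}
\item $\dom\Ll=\ran\mathcal B$ and $\mul\Ll=\A(\ker\mathcal B)$;
\item $\mathcal B$ is boundedly invertible, because $\sigma(\mathcal B)\subseteq\overline{W^n(\mathcal B)}\not\ni 0$;
\item hence $\infty\in\rho(\Ll)$.
\end{itemize}
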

\begin{pf}
     \begin{enumerate}[(a)]
    \item We show that $\infty\notin\sigma(\mathcal{L})$. Suppose otherwise, then either $\mul\mathcal{L}\neq\{0\}$ or $\dom\Ll\neq X$ which yields $0\in\sigma(\mathcal{B})$.
Then, $0 \in \overline{W^n(\mathcal{B})}$ which is a contradiction. 
Therefore, $\infty\notin\sigma_p(\mathcal{L})$. 
        Let $\lambda \in \sigma_p(\mathcal{L})$. Then for some vector
        $x\in X$, $x \neq 0$, we have $(x, \lambda x)\in \Ll$. This
        implies that $x= \mathcal{B} y$ and $\lambda x = \mathcal{A} y$
        for some $y\in X$, $y \neq 0$ and hence 
        $(\lambda \mathcal{B}-\mathcal{A})y=0.$ Therefore, 
        $0\in \sigma_p(\lambda \mathcal{B}-\A)$. 
According to \cite[Theorem 6.1]{radl2014block}, it follows that $0\in W^n(\lambda \mathcal{B}-\A). $ This implies, by \eqref{Linear relation/Pencil*}, that  $\lambda\in W^n(\mathcal{B}, \A)$. 
    \item If $\lambda \in \sigma(\Ll)$, then it follows from Proposition \ref{spectrum LR_implication} that $\lambda\in \sigma_p(\Ll^*)^*$ or $\lambda \in \sigma_{\ap}(\Ll)$. If $\lambda\in \sigma_p(\Ll^*)^*$, then $\overline{\lambda}\in \sigma_p(\Ll^*).$ By (a), we obtain  $\overline{\lambda}\in W^n(\mathcal{B}^*,\A^*)$. Thus, \eqref{Adjoint} shows that  $\lambda\in W^n(\mathcal{B},\A).$ If $\lambda \in \sigma_{\ap}(\Ll)$, it follows that there exists a sequence $(x_k,y_k)\in \Ll$ with $\|x_k\|=1$, $k\in\mathbb{N}$ and $\lambda x_k- y_k\rightarrow 0,$ as $k\rightarrow \infty$. Together with \eqref{linear relation: range representation}, this gives $x_k=\mathcal{B}z_k$ and $y_k=\mathcal{A}z_k$ for some $z_k\in X$ and hence $\lambda \mathcal{B}z_k- \A z_k \rightarrow 0,$ as $k\rightarrow \infty$. Therefore, $0\in \sigma_{\ap}(\lambda \mathcal{B}- \A)$, together with \cite[Theorem 6.1]{radl2014block}, we obtain $0\in \overline{W^n(\lambda\mathcal{B}-\A)}$. Since $0\notin \overline{W^n(\mathcal{B})}$, it follows from \eqref{Equivalence} that 
    $\lambda \in \overline{W^n(\mathcal{B}, \A)}$.
    \item Let $\lambda \in  W^{m}(\mathcal{B},\A).$ Then, by \eqref{Linear relation/Pencil}, it follows that  $0 \in  W^{m}(\lambda \mathcal{B}-\A).$ According to \cite[Theorem 3.5]{TretW03}, we obtain $ W^{m}(\lambda \mathcal{B}-\A)\subseteq W^{n}(\lambda \mathcal{B}-\A).$ Hence, by \eqref{Linear relation/Pencil*}, $\lambda \in  W^{n}(\mathcal{B},\A).$
\end{enumerate}
\end{pf}

\section{Decomposition into operator part and purely multivalued part} \label{Arens_decomposition}
We study the properties of the block numerical range for linear relations with an additional structural property. We use the orthogonal decomposition from~\cite{arens1961operational}. Every closed linear relation $\mathcal{L}$ in a Hilbert space $X$ admits the decomposition  
\begin{align}
\label{Arens decomposition}
	\mathcal{L} = \mathcal{L}_{s} \oplus \mathcal{L}_{\infty}, 
	\quad  \mathcal{L}_\infty := \{0\} \times \mul \mathcal{L},\quad \mathcal{L}_s := \mathcal{L} \ominus \mathcal{L}_{\infty},
\end{align}
into the \emph{operator part } $\mathcal{L}_{s}$ and the \emph{purely multivalued part} $\mathcal{L}_\infty$. In particular, $\mathcal{L}_{s}$ is the graph of an operator with $\dom \mathcal{L}_{s} = \dom \mathcal{L}$ and $\ran \mathcal{L}_{\infty} = \mul \mathcal{L}$. 

It was shown in \cite{arens1961operational} that for any closed
linear relation we have 
$\overline{\dom \mathcal{L}}= (\mul \mathcal{L})^\perp$ 
if and only if $\mul \Ll^*=\mul\Ll$. Moreover, $\ran \Ll_s \subseteq (\mul \mathcal{L})^\perp.$ In this case $\Ll_s$ is the graph of a closed and densely-defined operator in $\overline{\dom\Ll}$ and using the space decomposition 
$$
X=\overline{\dom\Ll}\oplus(\dom\Ll)^\perp
$$ 
we have the following range representation
\begin{align}\label{range representation Arens decomposition}
    \Ll=\ran\begin{bmatrix}
    \mathcal{B}\\
    \A
\end{bmatrix}, \
 \hbox{with} \quad \mathcal{B}=\begin{bmatrix}
      I_{\overline{\dom\Ll}} & 0 \\0&0
\end{bmatrix} \quad \hbox{and}\quad \mathcal{A}=\begin{bmatrix}
     \Ll_s &0\\0& I_{(\dom\Ll)^\perp}
\end{bmatrix}.
\end{align}
The numerical range $W(\Ll)$ was studied in \cite{rofe1990numerical, zhu2025essential} under the condition $\dom \mathcal{L} \subseteq (\mul \mathcal{L})^{\perp}$. 
\begin{theorem}
\label{thm:arens}
    Let $\Ll$ be a closed linear relation in $X$ with $\mul\Ll=\mul\Ll^*$ and range representation given by \eqref{range representation Arens decomposition} and the decomposition $X=X_1\oplus \cdots \oplus X_n\oplus(\dom\Ll)^\perp$. Then the following assertions hold.  
    \begin{enumerate}[(a)]
    \item $\sigma_{p}(\Ll)\subseteq W^{n+1}(\mathcal{B}, \A).$
    \item If $\Ll_s$ is a diagonally dominant $n\times n$ operator matrix of order 0\footnote{This means that, for $\Ll_s=(L_{ij})_{i,j=1}^n$, each off-diagonal block $L_{ij}$ is $L_{jj}$-bounded with relative bound $0$ for $i\neq j$; see \cite{rasulov2018spectral}.}, then $\sigma_{\ap}(\Ll)\subseteq \overline{W^{n+1}(\mathcal{B}, \A)}.$
    \item $W^{n+1}(\mathcal{B}, \A)\setminus \{\infty\}\subseteq W(\mathcal{B}, \A)$,   for all $n\ge  1$.
    \end{enumerate}
\end{theorem}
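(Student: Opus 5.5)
The plan is to compute the truncated relation $\Ll_f$ explicitly for the range representation \eqref{range representation Arens decomposition}. Write $X_{n+1}:=(\dom\Ll)^\perp$ and take $f=(f_1,\ldots,f_n,f_{n+1})$ with $f_i\in\mathcal S_{X_i}$. Two caveats apply. If $(\dom\Ll)^\perp=\{0\}$, the last block is absent and the statements reduce to the operator case. Also, $\Ll_s$ is in general unbounded, so the compression $\langle \Ll_s f_j,f_i\rangle$ only makes sense for $f_j\in\dom\Ll_s$. I will therefore read the definition with $f_i\in \mathcal S_{X_i}\cap\dom\Ll$ for $i\le n$, as is done for unbounded block operator matrices in \cite{rasulov2018spectral, tretter2009spectral}.

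Since $\mathcal B$ and $\A$ are block diagonal with respect to $\overline{\dom\Ll}\oplus X_{n+1}$, we obtain
\[
\mathcal B_f=\begin{bmatrix} I_n&0\\0&0\end{bmatrix},\qquad \A_f=\begin{bmatrix} (\Ll_s)_{f'}&0\\0&1\end{bmatrix},\qquad f'=(f_1,\ldots,f_n).
\]
Consequently
\[
\det(\lambda\mathcal B_f-\A_f)=-\det(\lambda I_n-(\Ll_s)_{f'}),
\]
which is a nonzero polynomial of degree $n$. Hence $\rho(\Ll_f)\neq\emptyset$ for every such $f$, and $\sigma(\Ll_f)\setminus\{\infty\}=\sigma((\Ll_s)_{f'})$, while $\infty\in\sigma(\Ll_f)$ always because $\mathcal B_f$ is singular. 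Using the characterization of $W^n(\mathcal B,\A)$ via determinants proved above (part (c) of the elementary properties proposition), this gives the key identity
\[
W^{n+1}(\mathcal B,\A)\setminus\{\infty\}=W^n(\Ll_s),
\]
the block numerical range of the operator matrix $\Ll_s$ in $\overline{\dom\Ll}=X_1\oplus\cdots\oplus X_n$, with $\infty\in W^{n+1}(\mathcal B,\A)$.

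\textbf{(a)} If $\infty\in\sigma_p(\Ll)$, it lies in $W^{n+1}(\mathcal B,\A)$ by the identity above. For finite $\lambda\in\sigma_p(\Ll)$, take $(x,\lambda x)\in\Ll$ with $x\ne0$ and decompose it by \eqref{Arens decomposition}. Then $x\in\dom\Ll$ and $\lambda x=\Ll_s x+m$ with $m\in\mul\Ll$. Since $\mul\Ll=\mul\Ll^*$ gives $\overline{\dom\Ll}=(\mul\Ll)^\perp$, and both $\lambda x$ and $\Ll_s x$ lie in $\overline{\dom\Ll}$, the component $m$ must vanish. Thus $\lambda\in\sigma_p(\Ll_s)$, and the point spectrum inclusion for block numerical ranges \eqref{5a} (in its unbounded form; only point spectrum is needed, which holds for arbitrary operator matrices by choosing $f_i=x_i/\|x_i\|$ and handling zero components by arbitrary unit vectors) gives $\lambda\in W^n(\Ll_s)$.

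\textbf{(b)} The same orthogonality argument applies to approximate eigenvectors. Given $(x_k,y_k)\in\Ll$ with $\|x_k\|=1$ and $y_k-\lambda x_k\to0$, write $y_k=\Ll_s x_k+m_k$. Projecting onto $\overline{\dom\Ll}$ yields $\Ll_s x_k-\lambda x_k\to0$, so $\lambda\in\sigma_{\ap}(\Ll_s)$. Then the enclosure $\sigma_{\ap}(\Ll_s)\subseteq\overline{W^n(\Ll_s)}$ for diagonally dominant operator matrices of order $0$ from \cite{rasulov2018spectral} finishes the argument. The case $\lambda=\infty$ is trivial since $\infty\in W^{n+1}$.

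\textbf{(c)} I will use $W^n(\Ll_s)\subseteq W(\Ll_s)$, which holds because an eigenvector $c$ of $(\Ll_s)_{f'}$ gives the vector $F_{f'}c\in\dom\Ll_s$ with the same Rayleigh quotient. It remains to check that $W(\Ll_s)\subseteq W(\mathcal B,\A)$: for $z\in\dom\Ll$ with $\|z\|=1$, we have $\mathcal B(z,0)=z$ and $\langle \A(z,0),\mathcal B(z,0)\rangle=\langle\Ll_s z,z\rangle$.

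The main obstacle is the careful justification of the unbounded setting. This concerns interpreting $\A$ (unbounded when $\Ll_s$ is) and the admissible $f$, and, in (b), making sure the cited enclosure from \cite{rasulov2018spectral} applies to $\sigma_{\ap}$ under exactly the order-$0$ diagonal dominance hypothesis.
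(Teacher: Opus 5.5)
Your proposal is correct and follows essentially the same route as the paper: compute the compressions to obtain $W^{n+1}(\mathcal B,\A)\setminus\{\infty\}=W^n(\Ll_s)$, reduce $\sigma_p(\Ll)$ and $\sigma_{\ap}(\Ll)$ to those of $\Ll_s$ via $\ran\Ll_s\perp\mul\Ll$, invoke the operator-matrix enclosures of \cite{rasulov2018spectral}, and get (c) from the Rayleigh quotient of a compressed eigenvector. Your version is somewhat more explicit: the identity $\det(\lambda\mathcal B_f-\A_f)=-\det(\lambda I_n-(\Ll_s)_{f'})$ verifies the regularity $\rho(\Ll_f)\neq\emptyset$ that the paper only asserts "by definition", you prove $\sigma_p(\Ll_s)\subseteq W^n(\Ll_s)$ directly instead of citing it, and in (c) you factor through $W^n(\Ll_s)\subseteq W(\Ll_s)\subseteq W(\mathcal B,\A)$ where the paper works with the vector $\beta$ directly.
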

\begin{pf} 
\begin{enumerate}[(a)]
    \item 
    Let $\lambda \in \sigma_p(\mathcal{L})\setminus \{\infty \}$. 
    Then there exists a nonzero vector $x\in \dom \Ll$ such that $(x, \lambda x)\in \Ll$. Due to (\ref{Arens decomposition}) it follows that there exist $g\in \mul \Ll$ such that $\lambda x= \Ll_s x+g$.
    Since $\mul\Ll=\mul\Ll^*$ yields 
    $\dom \mathcal{L}\subset (\mul \mathcal{L})^\perp$, hence
    $\lambda x= \Ll_s x$ and $g=0$. We easily deduce that $\lambda \in \sigma_p(\Ll_s)$.  By \cite[Theorem 5.1]{rasulov2018spectral}, we obtain that $\sigma_p(\Ll_s)\subseteq W^{n}(\Ll_s)$. By the definition of the block numerical range for the linear relations with range representation \eqref{range representation Arens decomposition}  and for the operator $\Ll_s$, we see that $W^n(\Ll_s)=W^{n+1}(\mathcal{B}, \A)\setminus\{\infty\}$. If $\mul\Ll=\{0\}$ then $\Ll_s=\Ll$ which proves the statement. Otherwise, if $\mul\Ll\neq\{0\}$ then we have that $\infty\in\sigma_p(\Ll)$ and $\infty\in W^{n+1}(\mathcal{B}, \A)$.
    \item Since $\Ll_s$ is a diagonally dominant $n\times n$ operator matrix of order 0, it follows from \cite[Theorem 5.2]{rasulov2018spectral} that $\sigma_{\ap}(\Ll_s)\subseteq \overline{W^{n}(\Ll_s)}.$ From the definition of approximate point spectrum and the orthogonality of the Arens decomposition we have that $\sigma_{\rm ap}(\Ll)\setminus\{\infty\}=\sigma_{\rm ap}(\Ll_s)$.  As established in the proof of (a), $W^n(\Ll_s)=W^{n+1}(\mathcal{B}, \A)\setminus\{\infty\}$. If $\mul\Ll=\{0\}$ then $\Ll_s=\Ll$. Thus, (b) follows. Otherwise, if $\mul\Ll\neq\{0\}$ then we have that $\infty\in\sigma_{p}(\Ll)$ in which case $\infty\in\sigma_{\ap}(\Ll)$  and $\infty\in W^{n+1}(\mathcal{B}, \A)$. 
    \item Let $\lambda \in W^{n+1}(\mathcal{B}, \A)\setminus \{\infty\}$, there exists $f=(f_i)_{i=1}^n\in\prod_{i=1}^n\mathcal{S}_{X_i}$ and choose $f_{n+1}\in(\dom\Ll)^\perp$ and $\|f_{n+1}\|=1$ 
 such that $\lambda \in \sigma(\Ll_f)$ and $\rho(\Ll_f)\neq\emptyset$. Hence, there exists a vector $\alpha=(\alpha_i)_{i=1}^{n+1}\in \mathbb{C}^{n+1}$, $\alpha\neq 0$ with $\|\alpha\|=\sqrt{|\alpha_1|^2 + \cdots + |\alpha_{n+1}|^2}= 1$ such that $(\alpha,\lambda\alpha)\in \Ll_f$. Then, by the range representation \eqref{range representation Arens decomposition}, it follows that $\alpha=\mathcal{B}_f\beta$ and $\lambda\alpha=\A_f\beta$ holds for some $\beta=(\beta_i)_{i=1}^{n+1}\in \mathbb{C}^{n+1}$, $\beta\neq 0$. We have $\lambda=\langle \A_f\beta,\mathcal{B}_f \beta\rangle$. It follows that $\lambda=\langle (\Ll_s)_f\beta', \beta'\rangle$ with $\beta'=(\beta_i)_{i=1}^n$. We have $\alpha=\mathcal{B}_f\beta$. Then, we obtain $1=\|\alpha\|=\|\beta'\|.$ 
 Take the vector $f_{\beta'}:=\sum_{i=1}^n\beta_i f_i$, \begin{align*}
         \|f_{\beta'}\|^2 =\sum _{i=1} ^{n} \|\beta_i f_{i}\| ^2 =\sum _{i=1} ^{n} |\beta_i |^2 = 1.
    \end{align*}
and hence $\lambda=\langle (\Ll_s)_f\beta', \beta'\rangle= \langle \Ll_s f_{\beta'}, f_{\beta'}\rangle $, $(f_{\beta'},\Ll_s f_{\beta'})\in \operatorname{gr}\Ll_s$. Thus, $\lambda \in  W(\Ll_s)$. By the range representation \eqref{range representation Arens decomposition} together with \eqref{Numerical range_Linear relation}, we obtain $W(\Ll_s)=W(\mathcal{B}, \A)$. Consequently, $W^{n+1}(\mathcal{B}, \A)\setminus \{\infty\}\subseteq W(\mathcal{B}, \A)$,   for all $n\ge  1$.
 \end{enumerate}
\end{pf}
\begin{theorem}\label{Arens_Equality}
     Let $\Ll$ be a closed linear relation in $X$ with $\mul\Ll=\mul\Ll^*$ and range 
     representation \eqref{range representation Arens decomposition} and the decomposition $X=\overline{\dom\Ll}\oplus(\dom\Ll)^\perp$. Then 
     $$
     W^2(\mathcal{B}, \A)\backslash \{\infty\}=W(\Ll).
     $$
\end{theorem}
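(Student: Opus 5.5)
Throughout, $n=2$ with $X_1=\overline{\dom\Ll}$ and $X_2=(\dom\Ll)^\perp$; by $\mul\Ll=\mul\Ll^*$ we have $X_2=\mul\Ll$. The plan is to compute the truncated relations $\Ll_f$ explicitly for $f=(f_1,f_2)\in\mathcal S_{X_1}\times\mathcal S_{X_2}$, where we require $f_1\in\dom\Ll$ so that $\Ll_s f_1$ makes sense (as in the proof of Theorem~\ref{thm:arens}(c)). From \eqref{range representation Arens decomposition} we get
\[
\mathcal B_f=\begin{bmatrix}1&0\\0&0\end{bmatrix},\qquad
\A_f=\begin{bmatrix}\langle \Ll_s f_1,f_1\rangle & 0\\ 0 & 1\end{bmatrix},
\]
where the off-diagonal entries vanish because $\Ll_s f_1\in(\mul\Ll)^\perp=X_1$ and the blocks of $\A$ are diagonal. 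Hence $\Ll_f=\{(z_1,\langle\Ll_sf_1,f_1\rangle z_1)\}\times(\{0\}\times\C)$, i.e.\ the graph of the scalar $\langle \Ll_s f_1,f_1\rangle$ on the first coordinate together with the purely multivalued part $\{0\}\times\C$ on the second.

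The first step is to note that $\rho(\Ll_f)\neq\emptyset$ for every such $f$. Indeed, $\det(\mu\mathcal B_f-\A_f)=-(\mu-\langle\Ll_sf_1,f_1\rangle)$ is not identically zero, so the pencil is regular. Proposition~\ref{prop:finite_spectrum} then gives $\sigma(\Ll_f)\setminus\{\infty\}=\{\langle \Ll_s f_1,f_1\rangle\}$, and $\infty\in\sigma(\Ll_f)$ because $\mathcal B_f$ is singular. Taking the union over all admissible $f$ yields
\[
W^2(\mathcal B,\A)\setminus\{\infty\}=\{\langle\Ll_s x,x\rangle: x\in\dom\Ll,\ \|x\|=1\}=W(\Ll_s).
\]

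The second step is to identify $W(\Ll_s)$ with $W(\Ll)$. If $(x,y)\in\Ll$ with $\|x\|=1$, then the Arens decomposition \eqref{Arens decomposition} gives $y=\Ll_s x+g$ with $g\in\mul\Ll$. Since $\mul\Ll=\mul\Ll^*$ implies $\dom\Ll\subseteq(\mul\Ll)^\perp$, we have $\langle g,x\rangle=0$, and therefore $\langle y,x\rangle=\langle\Ll_sx,x\rangle$. Conversely, $\operatorname{gr}\Ll_s\subseteq\Ll$. Together these give $W(\Ll)=W(\Ll_s)$, which completes the argument.

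The main obstacle I expect is the admissibility of $f_1$. Since $\Ll_s$ may be unbounded, $\A$ is not bounded in general, so the entry $\langle\A f_1,f_1\rangle$ is only meaningful for $f_1\in\dom\Ll$. I will adopt the same convention as in Theorem~\ref{thm:arens}, namely $f_1\in\dom\Ll\cap\mathcal S_{X_1}$, and verify carefully that the off-diagonal entries of $\A_f$ vanish. This vanishing uses $\ran\Ll_s\subseteq(\mul\Ll)^\perp=\overline{\dom\Ll}$, which is exactly where the hypothesis $\mul\Ll=\mul\Ll^*$ enters.
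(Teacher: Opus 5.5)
Your proposal is correct and follows the paper's proof essentially step by step: you compute $\mathcal{B}_f=\diag(1,0)$ and $\A_f=\diag(\langle\Ll_sf_1,f_1\rangle,1)$, read off $\sigma(\Ll_f)=\{\langle\Ll_sf_1,f_1\rangle,\infty\}$, take the union to obtain $W(\Ll_s)$, and then identify $W(\Ll_s)$ with $W(\Ll)$. The only differences are that you prove $W(\Ll_s)=W(\Ll)$ directly from the Arens decomposition and $\dom\Ll\subseteq(\mul\Ll)^\perp$, where the paper cites \cite[Proposition 3.1]{zhu2025essential}, and that you state explicitly the restriction $f_1\in\dom\Ll$ (needed because $\Ll_s$ may be unbounded), which the paper leaves tacit.
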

\begin{pf} 
By Definition~\ref{Definition: Block numerical range}, we have
$$ 
W^2(\mathcal{B}, \A)=\bigcup\limits_{\substack{f=(f_1,f_2) \in 
\overline{\dom\Ll}\oplus(\dom\Ll)^\perp , \\
\|f_1\|=\|f_2\|=1, \ 
\rho(\mathcal{L}_f)\neq\emptyset}} \sigma(\mathcal{L}_f).
$$
It follows from the range representation \eqref{range representation Arens decomposition} and \eqref{eq:truncated_relation} that  
\begin{eqnarray*}
W^2(\mathcal{B}, \A) &=& \bigcup\limits_{\substack{\|f_1\|=1, \\ \rho(\mathcal{L}_f)\neq\emptyset}} \sigma\left(\left\{
\left(
\begin{bmatrix}
1 & 0\\
0 & 0
\end{bmatrix}
\begin{bmatrix}
z_1\\
z_2
\end{bmatrix},
\begin{bmatrix}
\langle \Ll_sf_1,f_1\rangle & 0\\
0 & 1
\end{bmatrix}
\begin{bmatrix}
z_1\\
z_2
\end{bmatrix}
\right)
:\ z_1,z_2\in\mathbb C
\right\}
\right)\\ &=& \bigcup\limits_{\substack{\|f_1\|=1, \\ \rho(\mathcal{L}_f)\neq\emptyset}} \sigma\left(\left\{
\left(
(z_1, 0),
(\langle \Ll_sf_1,f_1\rangle z_1, z_2)
\right)
:\ z_1,z_2\in\mathbb C
\right\}
\right). \end{eqnarray*}
Moreover, for $\lambda \in \C$, we obtain $\Ll_f-\lambda = \left\{
\left(
(z_1, 0),
((\langle \Ll_sf_1,f_1\rangle-\lambda) z_1, z_2)
\right)
:\ z_1,z_2\in\mathbb C
\right\}.$ For $\lambda=\langle \Ll_sf_1,f_1\rangle$, we have  $$\Ll_f-\lambda= \left\{
\left(
(z_1, 0),
(0, z_2)
\right)
:\ z_1,z_2\in\mathbb C
\right\}.$$
Since $z_2$ is arbitrary, it follows that $\sigma( \Ll_f)=\{\langle \Ll_sf_1,f_1\rangle\}\cup \{\infty\}$ and $\mul\Ll=\Span\left\{\left[\begin{smallmatrix}
    0\\1
\end{smallmatrix}\right]\right\}.$  Hence, $$ W^2(\mathcal{B}, \A)\backslash \{\infty\}=W(\Ll_s).$$  It is shown in \cite[Proposition 3.1]{zhu2025essential} that $W(\Ll_s) =W(\Ll)$. Consequently, $ W^2(\mathcal{B}, \A)\backslash \{\infty\}=W(\Ll).$\end{pf}
\begin{rmk}  
    The assumption $\mul\Ll=\mul\Ll^*$ is fulfilled for large subclasses of linear relations like self-adjoint linear relations, but it also holds if $\mathcal{L}$ is \emph{maximally dissipative}, see \cite[Lemma 1.6.2]{zbMATH07170295}. 
\end{rmk}

\section{Range representation based on the resolvent}
\label{sec:resolvent-block-numerical-range}

In this section, we study a particular range representation, which requires only $\rho(\Ll)\neq \emptyset.$

Let $X$ be a complex Hilbert space with a finite orthogonal decomposition
$X=X_1\oplus\cdots\oplus X_n,$ where $X_j\neq\{0\}$ for every $j=1,\ldots,n$.  Let $\Ll$ be a closed linear relation in $X$ such that
$\rho(\Ll)\neq\emptyset$, and fix $\mu\in\rho(\Ll)$ with $\mu\neq\infty$. We set
$$
R_\mu:=(\Ll-\mu)^{-1}.
$$
Then $\Ll$ admits the range representation (cf. Proposition \ref{prop:relationfrompsres})
\begin{align}
\label{eq:ran_with_resolvents2}
\Ll
=
\operatorname{ran}
\begin{bmatrix}
R_\mu\\
I+\mu R_\mu
\end{bmatrix}
=
\left\{
\left(
R_\mu z,
z+\mu R_\mu z
\right):
z\in X
\right\}.
\end{align}
Let $f\in\prod_{i=1}^n \mathcal{S}_{X_i}$ then the corresponding compression of the resolvent and the relation are given by
$$
\Ll_{\mu,f}
=
\operatorname{ran}
\begin{bmatrix}
R_{\mu,f}\\
I_n+\mu R_{\mu,f}
\end{bmatrix}
\subseteq
\mathbb{C}^n\times\mathbb{C}^n,\quad R_{\mu,f}
:=
F_f^*R_\mu F_f
\in\mathbb{C}^{n\times n},
$$
where we use the subscript $\mu$ to indicate the dependence of the truncation of the particular range representation. 

The next lemma shows that for linear relations with non-empty resolvent set then  any truncated resolvent representation leads to a linear relation with finite spectrum. 
\begin{lemma}
\label{lem:spectrum-compressed-relation}
Let $\mathcal{L}$ be a linear relation in $X$ with $\mu\in\rho(\mathcal{L})\neq\emptyset$. Then for every
$f\in\prod_{j=1}^n\mathcal{S}_{X_j}$, one has
$$
(\Ll_{\mu,f}-\mu)^{-1}
=
\operatorname{gr}R_{\mu,f}.
$$
In particular,
$
\mu\in\rho(\Ll_{\mu,f}).
$
Moreover,
$$
\sigma(\Ll_{\mu,f})
=
\left\{
\mu+\frac{1}{\nu}:
\nu\in\sigma(R_{\mu,f})
\right\}.
$$
Consequently, $\sigma(\Ll_{\mu,f})$ contains at most $n$ points. Moreover, $0\in\sigma(R_{\mu,f})$ corresponds to $\infty \in \sigma(\Ll_{\mu,f}).$
\end{lemma}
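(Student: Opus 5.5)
The argument is a direct computation with the finite-dimensional relation $\Ll_{\mu,f}=\{(R_{\mu,f}z,\,z+\mu R_{\mu,f}z):z\in\C^n\}$. I first shift by $\mu$:
\[
\Ll_{\mu,f}-\mu=\{(R_{\mu,f}z,\,z+\mu R_{\mu,f}z-\mu R_{\mu,f}z)\}=\{(R_{\mu,f}z,\,z):z\in\C^n\}.
\]
Inverting gives $(\Ll_{\mu,f}-\mu)^{-1}=\{(z,R_{\mu,f}z):z\in\C^n\}=\operatorname{gr}R_{\mu,f}$. This is the graph of an everywhere defined matrix. Hence $\ker(\Ll_{\mu,f}-\mu)=\mul(\Ll_{\mu,f}-\mu)^{-1}=\{0\}$ and $\ran(\Ll_{\mu,f}-\mu)=\dom R_{\mu,f}=\C^n$, so $\mu\in\rho(\Ll_{\mu,f})$. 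No structure of $f$ is needed here beyond $R_{\mu,f}$ being an $n\times n$ matrix.

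For the spectrum, Proposition~\ref{prop:finite_spectrum} applies because $\rho(\Ll_{\mu,f})\neq\emptyset$, so $\sigma(\Ll_{\mu,f})=\sigma_p(\Ll_{\mu,f})$. It therefore suffices to identify eigenvalues. Take $\lambda\in\C$ with $\lambda\neq\mu$ and set $\nu:=(\lambda-\mu)^{-1}\neq 0$. Then $(x,\lambda x)\in\Ll_{\mu,f}$ with $x\neq0$ if and only if $(x,(\lambda-\mu)x)\in\Ll_{\mu,f}-\mu$. This in turn holds if and only if $((\lambda-\mu)x,x)\in\operatorname{gr}R_{\mu,f}$, that is, $R_{\mu,f}x=\nu^{-1}\cdots$; more precisely $R_{\mu,f}(\lambda-\mu)x=x$, i.e.\ $R_{\mu,f}x=\nu x$. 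So the finite eigenvalues of $\Ll_{\mu,f}$ are exactly the points $\mu+1/\nu$ with $\nu\in\sigma(R_{\mu,f})\setminus\{0\}$. This step is the relation form of \eqref{point spectrum: inverse linear relation}, applied to $\Ll_{\mu,f}-\mu$. The point $\lambda=\mu$ is excluded because it lies in the resolvent set.

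For $\infty$, the inclusion $\infty\in\sigma_p(\Ll_{\mu,f})$ means $\mul\Ll_{\mu,f}\neq\{0\}$. Now $\mul\Ll_{\mu,f}=\mul(\Ll_{\mu,f}-\mu)=\ker R_{\mu,f}$, so this is equivalent to $0\in\sigma(R_{\mu,f})$. Equivalently, by the last clause of Proposition~\ref{prop:finite_spectrum} with $\mathcal B=R_{\mu,f}$, it holds iff $\det R_{\mu,f}=0$. With the convention $\mu+1/0=\infty$, this gives the stated formula. Since $\sigma(R_{\mu,f})$ has at most $n$ elements and the map $\nu\mapsto\mu+1/\nu$ is injective, $\sigma(\Ll_{\mu,f})$ has at most $n$ points.

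The only delicate point is bookkeeping: $\infty$ must be treated separately from the finite eigenvalues, and one must check that the correspondence $\nu\leftrightarrow\lambda$ is bijective and not merely an inclusion. Both directions of the displayed equivalences are reversible, so this should pose no real obstacle.
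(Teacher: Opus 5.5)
Your proposal is correct and follows essentially the same route as the paper. You shift by $\mu$ to get $(\Ll_{\mu,f}-\mu)^{-1}=\operatorname{gr}R_{\mu,f}$, deduce $\mu\in\rho(\Ll_{\mu,f})$, match finite spectral points via $R_{\mu,f}x=\nu x$ with $\nu=(\lambda-\mu)^{-1}$ (the paper's $1-(\lambda-\mu)\nu=0$), and identify $\infty$ with $0\in\sigma(R_{\mu,f})$ through $\mul\Ll_{\mu,f}=\ker R_{\mu,f}$, spelling out details the paper calls obvious (delete the stray fragment ``$R_{\mu,f}x=\nu^{-1}\cdots$'').
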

\begin{pf}
The first statement is obvious.
Since $R_{\mu,f}$ is an everywhere-defined bounded operator on
$\mathbb{C}^n$, it follows that $\mu\in\rho(\Ll_{\mu,f})$. Thus $\lambda\in\sigma(\Ll_{\mu,f})\setminus\{\infty\},$ $\lambda \neq \mu$ if and only if there
exists $\nu\in\sigma(R_{\mu,f})$ such that
$$
1-(\lambda-\mu)\nu=0.
$$
Such an equality is possible only if $\nu\neq0$, and in that case $\lambda=\mu+\tfrac{1}{\nu}.$ 
Moreover, $\infty\in\rho(\Ll_{\mu,f})$ if and only if $0\in\rho(R_{\mu,f})$.
\end{pf}

From Lemma~\ref{lem:spectrum-compressed-relation} we obtain that $\mu\in\rho(\Ll_{\mu,f})$ for all $f\in\prod_{i=1}^n\mathcal{S}_{X_i}$ and therefore the assumption $\rho(\Ll_{\mu,f})\neq\emptyset$ in the definition of the block numerical range can be dropped. Hence, the block numerical range is given by 
\[
W_{\mu}^n(\mathcal{L}):=W^n(R_\mu,I+\mu R_{\mu})=\bigcup_{f\in\prod_{j=1}^n\mathcal{S}_{X_j}}
\sigma(\Ll_{\mu,f}).
\]
\begin{proposition}
\label{prop:point-and-approximate-spectrum-enclosure}
Let $\Ll$ be a closed linear relation in a Hilbert space $X$ and $\mu\in\rho(\mathcal{L})\neq\emptyset$ and range representation \eqref{eq:ran_with_resolvents2}. Then the following hold
\begin{enumerate}[(a)]
    \item $W_{\mu}^n(\Ll)
=
\left\{
\mu+\frac{1}{\nu}:
\nu\in W^n(R_\mu)
\right\}$; where $\nu=0$ corresponds to $\infty \in W_{\mu}^n(\Ll).$ 
 \item $\sigma_p(\Ll)
\subseteq
W_{\mu}^n(\Ll)$;
    \item  
    $\sigma_{\mathrm{\ap}}(\Ll)
\subseteq
\overline{W_{\mu}^n(\Ll)}$;
\end{enumerate}
\end{proposition}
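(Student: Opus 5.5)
Part (a) follows by taking the union over $f$ in Lemma~\ref{lem:spectrum-compressed-relation}. For every $f\in\prod_{j=1}^n\mathcal{S}_{X_j}$ we have $\sigma(\Ll_{\mu,f})=\{\mu+1/\nu:\nu\in\sigma(R_{\mu,f})\}$, with $\nu=0$ corresponding to $\infty$. Since $R_{\mu,f}=F_f^*R_\mu F_f$ is exactly the compression used in \eqref{eq:block_range_matrix}, the definition gives $\bigcup_f\sigma(R_{\mu,f})=W^n(R_\mu)$. No restriction on $f$ is needed, because $\mu\in\rho(\Ll_{\mu,f})$ always holds. This proves (a).

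For (b), I use the operator block numerical range of the bounded operator $R_\mu$ together with \eqref{5a}. Let $\lambda\in\sigma_p(\Ll)\setminus\{\infty\}$.
\begin{itemize}
\item The case $\lambda=\mu$ cannot occur, since $\mu\in\rho(\Ll)$.
\item For $\lambda\neq\mu$, take $x\neq0$ with $(x,\lambda x)\in\Ll$. Then $(x,(\lambda-\mu)x)\in\Ll-\mu$, so $R_\mu x=(\lambda-\mu)^{-1}x$, that is, $\nu:=(\lambda-\mu)^{-1}\in\sigma_p(R_\mu)$.
\item By \eqref{5a}, $\nu\in W^n(R_\mu)$, and by (a), $\lambda=\mu+1/\nu\in W^n_\mu(\Ll)$.
\end{itemize}
If $\infty\in\sigma_p(\Ll)$, then $\mul\Ll\neq\{0\}$. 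By Proposition~\ref{prop:relationfrompsres}, $\ker R_\mu=\mul\Ll\neq\{0\}$, so $0\in\sigma_p(R_\mu)\subseteq W^n(R_\mu)$, which corresponds to $\infty\in W^n_\mu(\Ll)$ by (a).

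For (c), let $\lambda\in\sigma_{\ap}(\Ll)$ with $\lambda\in\C$; this forces $\lambda\neq\mu$, because $\mu\in\rho(\Ll)$ is not in $\sigma_{\ap}$. Take $(x_k,y_k)\in\Ll$ with $\|x_k\|=1$ and $y_k-\lambda x_k\to0$. Put $z_k:=y_k-\mu x_k$. Then $R_\mu z_k=x_k$ and
\[
z_k-(\lambda-\mu)x_k=y_k-\lambda x_k\to0,
\]
so $\|z_k\|\to|\lambda-\mu|\neq0$. With $\nu:=(\lambda-\mu)^{-1}$ we get $R_\mu z_k-\nu z_k=x_k-\nu z_k=-\nu\bigl(z_k-(\lambda-\mu)x_k\bigr)\to0$. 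After normalizing $z_k$, this shows $\nu\in\sigma_{\ap}(R_\mu)\subseteq\sigma(R_\mu)\subseteq\overline{W^n(R_\mu)}$ by \eqref{5a}.

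It remains to transfer closure through the map $\nu\mapsto\mu+1/\nu$. Choose $\nu_k\in W^n(R_\mu)$ with $\nu_k\to\nu$. Since $\nu\neq0$, we have $\nu_k\neq0$ for large $k$. Then $\mu+1/\nu_k\in W^n_\mu(\Ll)$ by (a), and $\mu+1/\nu_k\to\lambda$, so $\lambda\in\overline{W^n_\mu(\Ll)}$.

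If $\infty\in\sigma_{\ap}(\Ll)$, i.e.\ $0\in\sigma_{\ap}(\Ll^{-1})$, I would show $0\in\sigma_{\ap}(R_\mu)$ by the same computation applied to pairs $(x_k,y_k)\in\Ll$ with $\|y_k\|=1$ and $x_k\to0$. Then $\|z_k\|\to1$ and $R_\mu z_k=x_k\to0$. Hence $0\in\overline{W^n(R_\mu)}$, which gives $\infty$ in the closure in the extended sense.

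The only delicate points are this bookkeeping at $\nu=0$ and the requirement $\nu\neq0$ in the closure argument, which holds because $\lambda$ is finite. The main input throughout is the known enclosure \eqref{5a} for bounded operators.
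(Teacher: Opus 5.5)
Your proof is correct, and it uses the same reduction as the paper. You pass from $\lambda$ to $\nu=(\lambda-\mu)^{-1}$ and from $\Ll$ to the bounded operator $R_\mu$, then transport back with part (a). The difference lies in how the enclosure for $R_\mu$ is obtained.

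\textbf{The paper's route.} It reproves the enclosure inline. In (b) it builds the compression from the eigenvector, $f_j=P_{X_j}x/\|P_{X_j}x\|$ and $c_j=\|P_{X_j}x\|$, so that $R_{\mu,f}c=\nu c$. In (c) it compresses along the approximate eigenvectors $x_k$ of $\Ll$. It then uses the singular-value bound $\operatorname{dist}(\nu,\sigma(A_k))^n\le\|(A_k-\nu I_n)c^{(k)}\|\,\|A_k-\nu I_n\|^{n-1}$ to produce eigenvalues $\nu_k\to\nu$ of the compressions.

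\textbf{Your route.} You invoke \eqref{5a} for $R_\mu$ as a black box; the paper itself does this only for the point $\infty$. This makes your argument shorter. Because you use the full inclusion $\sigma(R_\mu)\subseteq\overline{W^n(R_\mu)}$ together with $\lambda\in\sigma(\Ll)\Leftrightarrow\nu\in\sigma(R_\mu)$ for finite $\lambda\neq\mu$, it also yields $\sigma(\Ll)\subseteq\overline{W^n_\mu(\Ll)}$ at once. The paper's next theorem obtains this only after splitting into the cases $\nu\in\sigma_{\ap}(R_\mu)$ and $\overline{\nu}\in\sigma_p(R_\mu^*)$.

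\textbf{What each buys.} The paper's version is self-contained. It also gives explicit compressions, built directly from the (approximate) eigenvectors of $\Ll$, with a quantitative estimate. Your treatment of $\infty$ in (c) completes a step the paper leaves implicit: passing from $0\in\sigma_{\ap}(R_\mu)$ to $\infty$ lying in the closure. Either some $\nu_k=0$, or $\mu+1/\nu_k\to\infty$.
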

\begin{pf}
\begin{enumerate}[(a)]
\item 
Follows from Lemma~\ref{lem:spectrum-compressed-relation} and the definition of the block numerical range \eqref{eq:block_range_matrix}.
\item Let
$\lambda\in\sigma_p(\Ll)$. Then there exists $x\neq0$ such that $(x,\lambda x)\in\Ll.$ 
Since $\mu\in\rho(\Ll)$, we have $\lambda\neq\mu$. Set
$$
\nu:=\tfrac{1}{\lambda-\mu}.
$$
The relation $
(x,\lambda x)\in\Ll
$ implies
$
\bigl((\lambda-\mu)x,x\bigr)
\in
(\Ll-\mu)^{-1}
=
\operatorname{gr}R_\mu.
$
Hence
$$
R_\mu\bigl((\lambda-\mu)x\bigr)=x,
$$
or, equivalently,
$$
R_\mu x=\nu x.
$$
Let $P_{X_j}$ denote the orthogonal projection onto $X_j$. For each
$j=1,\ldots,n$, choose
$$
f_j
=
\frac{P_{X_j}x}{\|P_{X_j}x\|},
$$
if $P_{X_j}x\neq0$, and choose an arbitrary
$f_j\in\mathcal{S}_{X_j}$ if $P_{X_j}x=0$. Set $f=(f_1,\ldots, f_n) \in \Ss_{X_1}\times \ldots \Ss_{X_n}.$ Define
$$
c_j:=\|P_{X_j}x\|,
\qquad
c:=(c_1,\ldots,c_n)^\top.
$$
Then $c\neq0$ and $
x=F_fc.$ Consequently,
$$
\begin{aligned}
R_{\mu,f}c=
F_f^*R_\mu F_fc=
F_f^*R_\mu x=
\nu F_f^*x=
\nu c.
\end{aligned}
$$
Thus, $\nu\in\sigma(R_{\mu,f})\setminus\{0\}.$ 
By Lemma~\ref{lem:spectrum-compressed-relation},
$$\lambda
=
\mu+\tfrac{1}{\nu}
\in
\sigma(\Ll_{\mu,f})
\subseteq
W_{\mu}^n(\Ll).
$$
This proves
$\sigma_p(\Ll)
\subseteq
W_{\mu}^n(\Ll).$ If $\infty \in \sigma_p(\Ll),$ then $\mul\Ll \neq \{0\}.$ Therefore $\ker R_\mu \neq \{0\}$ and hence $0\in \sigma_p(R_\mu).$ By the spectral inclusion for bounded operators, $0\in W^n(R_\mu)$ and from (a) we obtain $\infty \in W_{\mu}^n(\Ll).$ 

\item Let
$\lambda\in\sigma_{\ap}(\Ll)\setminus\{\infty\}$. Then there exists a sequence
$(x_k,y_k)\in\Ll$ such that $\|x_k\|=1$
and $z_k:=y_k-\lambda x_k\rightarrow0.$ Again, $\lambda\neq\mu$, and we again set $
\nu:=\tfrac{1}{\lambda-\mu}.$
Since $(x_k,y_k)\in\Ll$, we have
$
R_\mu(y_k-\mu x_k)=x_k.
$
Using
$
y_k-\mu x_k
=
(\lambda-\mu)x_k+z_k,
$
we obtain
$$
x_k
=
(\lambda-\mu)R_\mu x_k+R_\mu z_k.
$$
Multiplication by $\nu$ gives
\begin{align}
\label{eq:Rmu_to_zero}
(R_\mu-\nu I)x_k
=
-\nu R_\mu z_k
\rightarrow0.
\end{align}

For every $k$, choose
$
f^{(k)}
=
\left(
f_1^{(k)},\ldots,f_n^{(k)}
\right)
\in
\prod_{j=1}^n\mathcal{S}_{X_j}
$ from the normalized block components of $x_k$ as above.
Define
$$
c_j^{(k)}
:=
\|P_{X_j}x_k\|,
\qquad
c^{(k)}
:=
\left(
c_1^{(k)},\ldots,c_n^{(k)}
\right)^\top.
$$
Then $x_k=F_{f^{(k)}}c^{(k)}$ and $\|c^{(k)}\|=\|x_k\|=1.$
Set
$$
A_k
:=
R_{\mu,f^{(k)}}
=
F_{f^{(k)}}^*R_\mu F_{f^{(k)}}.
$$
Then \eqref{eq:Rmu_to_zero} implies that
$$
\begin{aligned}
(A_k-\nu I_n)c^{(k)}
&=
F_{f^{(k)}}^*(R_\mu-\nu I)x_k
\longrightarrow0.
\end{aligned}
$$

Let $d_k:=
\operatorname{dist}\bigl(\nu,\sigma(A_k)\bigr).$
Then, since $A_k$ is an $n\times n$ matrix,
$$
d_k^n
\le \prod_{\lambda\in\sigma(A_k)} |\lambda-\nu|=
\left|\det(A_k-\nu I_n)\right|.
$$
By applying the singular value decomposition argument used in the proof of Lemma \ref{Equivalence: linear relation_Pencil} to the operator $A_k-\nu I_n$, we obtain, for every unit 
vector $c\in\mathbb{C}^n$, 
\begin{align*}
\left|\det(A_k-\nu I_n)\right| &\le
\|(A_k-\nu I_n)c\|
\|A_k-\nu I_n\|^{n-1}.
\end{align*}
Applying this estimate with $c=c^{(k)}$ gives
$$
d_k^n
\le
\|(A_k-\nu I_n)c^{(k)}\|
\|A_k-\nu I_n\|^{n-1}.
$$
Since $
\|A_k-\nu I_n\|
\le
\|R_\mu\|+|\nu|,
$ we conclude that $
d_k\rightarrow0.$ Choose $\nu_k\in\sigma(A_k)$ such that $|\nu_k-\nu|=\operatorname{dist}\bigl(\nu,\sigma(A_k)\bigr)=d_k.$
Then
$
\nu_k\rightarrow\nu.
$
Since $\nu\neq0$, we have $\nu_k\neq0$ for all sufficiently large $k$. For
such $k$, define
$$
\lambda_k
:=
\mu+\frac{1}{\nu_k}.
$$
By Lemma~\ref{lem:spectrum-compressed-relation},
$
\lambda_k
\in
\sigma(\Ll_{\mu,f^{(k)}})
\subseteq
W_{\mu}^n(\Ll).
$ Furthermore,
$
\lambda_k
\rightarrow
\mu+\tfrac{1}{\nu}
=
\lambda.$ 
Therefore, $\lambda\in\overline{W_{\mu}^n(\Ll)}.$ 
This proves $\sigma_{\ap}(\Ll)\setminus\{\infty\}
\subseteq\overline{W_{\mu}^n(\Ll)}.$ We have $\infty\in\sigma_{\ap}(\Ll)$ if and only if $ 0\in\sigma_{\ap}(\Ll^{-1})$. Hence, there exists a sequence $(y_k,x_k)\in \Ll^{-1}$ such that $\|y_k\|=1$ and $\|x_k\|\rightarrow 0$. We have $(y_k,x_k)\in \Ll^{-1} $ if and only if $(x_k,y_k) \in \Ll$, which is equivalent to $ (x_k, y_k-\mu x_k)\in \Ll-\mu$, and this is equivalent to $(y_k-\mu x_k, x_k)\in (\Ll-\mu)^{-1}=\operatorname{gr} R_\mu.$ Therefore, $R_\mu y_k=(I+\mu R_\mu)x_k$. Since $\|x_k\|\rightarrow 0$ and $(I+\mu R_\mu)$ is a bounded operator, it follows that $\|R_\mu y_k\|\rightarrow 0, k\rightarrow \infty.$ Consequently, $0\in\sigma_{\ap}(R_\mu)$ since $\|y_k\|=1$.  \end{enumerate}
\end{pf}
Our second main result is the following.
\begin{theorem}
\label{thm: spectrum-enclosure}
Let $\Ll$ be a closed linear relation in a Hilbert space $X$ and $\mu\in\rho(\mathcal{L})\neq\emptyset$ and range representation \eqref{eq:ran_with_resolvents2}. Then the following holds
$$\sigma(\Ll)
\subseteq
\overline{W_{\mu}^n(\Ll)}.$$
\end{theorem}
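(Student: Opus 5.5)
The plan is to reduce the statement to Proposition~\ref{spectrum LR_implication} together with Proposition~\ref{prop:point-and-approximate-spectrum-enclosure}. Proposition~\ref{spectrum LR_implication} gives
\[
\sigma(\Ll)=\sigma_{\ap}(\Ll)\cup\sigma_p(\Ll^*)^*,
\]
and part (c) of Proposition~\ref{prop:point-and-approximate-spectrum-enclosure} already handles the approximate point spectrum, including $\infty$. So it remains to show that every $\lambda$ with $\overline{\lambda}\in\sigma_p(\Ll^*)$ lies in $\overline{W_{\mu}^n(\Ll)}$. Proposition~\ref{spectrum LR_implication} was stated for a range representation with bounded $\mathcal A,\mathcal B$. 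The representation \eqref{eq:ran_with_resolvents2} is of exactly this type, so it applies.

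For the adjoint part, I would use the resolvent of the adjoint rather than passing through \eqref{Adjoint}. Since $\mu\in\rho(\Ll)$, we have $\overline{\mu}\in\rho(\Ll^*)$ (cf.\ \cite[Proposition 1.3.10]{zbMATH07170295}). We also have $(\Ll^*-\overline{\mu})^{-1}=((\Ll-\mu)^{-1})^*=R_\mu^*$.

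Now let $\overline{\lambda}\in\sigma_p(\Ll^*)$ with $\lambda\in\C$, so $\lambda\neq\mu$. As in the proof of part (b) of Proposition~\ref{prop:point-and-approximate-spectrum-enclosure}, there is $x\neq0$ with $R_\mu^*x=\overline{\nu}x$, where $\nu=1/(\lambda-\mu)$. Thus $\overline{\nu}\in\sigma_p(R_\mu^*)$. The operator $R_\mu$ is bounded, so the classical inclusion \eqref{5a} gives $\sigma(R_\mu)\subseteq\overline{W^n(R_\mu)}$. Because $\overline{\nu}\in\sigma_p(R_\mu^*)$, we get $\nu\in\sigma(R_\mu)$.

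Alternatively one can argue directly. Choose $f$ from the normalized block components of $x$. Then $R_{\mu,f}^*=F_f^*R_\mu^*F_f$ has eigenvalue $\overline{\nu}$, hence $\nu\in\sigma(R_{\mu,f})\subseteq W^n(R_\mu)$. This direct route even avoids the closure.

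In either case, $\nu\neq0$, and part (a) of Proposition~\ref{prop:point-and-approximate-spectrum-enclosure} gives
\[
\lambda=\mu+\frac{1}{\nu}\in W_\mu^n(\Ll).
\]
If the closure route was used, I would take $\nu_k\in W^n(R_\mu)$ with $\nu_k\to\nu\neq0$. Then $\mu+1/\nu_k\in W_\mu^n(\Ll)$ and $\mu+1/\nu_k\to\lambda$.

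The case $\lambda=\infty$ is covered by the convention that $\infty\in\sigma(\Ll)$ means $\mul\Ll\neq\{0\}$ or $\dom\Ll\neq X$. By Proposition~\ref{prop:relationfrompsres}, this means $\ker R_\mu\neq\{0\}$ or $\ran R_\mu\neq X$, so $0\in\sigma(R_\mu)\subseteq\overline{W^n(R_\mu)}$. By (a), with $\nu\to0$ corresponding to $\lambda\to\infty$, this gives $\infty\in\overline{W^n_\mu(\Ll)}$.

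The main obstacle is making sure that Proposition~\ref{spectrum LR_implication} really covers all of $\sigma(\Ll)$ in this representation. A simpler fallback avoids it entirely. By the spectral mapping for resolvents (Lemma~\ref{lem:spectrum-compressed-relation} in infinite dimensions), $\lambda\in\sigma(\Ll)\setminus\{\mu\}$ if and only if $1/(\lambda-\mu)\in\sigma(R_\mu)$. Combining this with $\sigma(R_\mu)\subseteq\overline{W^n(R_\mu)}$ from \eqref{5a} and with (a) gives the claim directly. I would adopt this fallback as the main argument, using continuity of $\nu\mapsto\mu+1/\nu$ away from $0$ to pass to closures.
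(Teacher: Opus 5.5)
Your main (fallback) argument is correct and is essentially the paper's proof. You pass from $\lambda$ to $\nu=1/(\lambda-\mu)\in\sigma(R_\mu)\setminus\{0\}$ via the resolvent representation, enclose $\nu$ in $\overline{W^n(R_\mu)}$, map back with Proposition~\ref{prop:point-and-approximate-spectrum-enclosure}(a) and continuity, and treat $\infty$ through $\ker R_\mu=\mul\Ll$ and $\ran R_\mu=\dom\Ll$. The only difference is that the paper re-derives $\nu\in\overline{W^n(R_\mu)}$ by splitting into two cases: $\nu\in\sigma_{\ap}(R_\mu)$, handled by the compression argument, and $\overline{\nu}\in\sigma_p(R_\mu^*)$, handled by eigenvector compression for the adjoint, exactly your ``direct'' variant. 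You instead cite the bounded-operator inclusion \eqref{5a}, which the paper itself invokes for the point $\infty$.
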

\begin{pf} 
For $\lambda\neq\mu$, the resolvent representation of $\Ll$ gives
$$
\lambda\in\rho(\Ll)
\ \text{if and only if } \
I-(\lambda-\mu)R_\mu
\text{ is invertible}.
$$
Consequently,
$$
\lambda\in\sigma(\Ll) \ \text{if and only if } \
\frac{1}{\lambda-\mu}\in\sigma(R_\mu).
$$

Let $\lambda\in\sigma(\Ll)\setminus\{\infty\}$ and set
$
\nu:=\tfrac{1}{\lambda-\mu}.
$
 Then $\nu\in\sigma(R_\mu)\setminus\{0\}.$ 
If $\nu\in\sigma_{\ap}(R_\mu)$, the compression argument from the
proof of part (c) of Proposition \ref{prop:point-and-approximate-spectrum-enclosure} shows that $\nu\in\overline{W^n(R_\mu)}.$

Suppose instead that $\nu\notin\sigma_{\ap}(R_\mu). $
Then $R_\mu-\nu I$ is bounded below and therefore has closed range. Since
$\nu\in\sigma(R_\mu)$, the operator $R_\mu-\nu I$ is not surjective. Hence,
$$
\ker(R_\mu^*-\overline{\nu}I)\neq\{0\}.
$$
Applying the point-spectrum compression argument to $R_\mu^*$ yields $
\overline{\nu}\in W^n(R_\mu^*)$. For every $f$, one has $$F_f^*R_\mu^*F_f=
(F_f^*R_\mu F_f)^*.$$ Therefore, $W^n(R_\mu^*)=
\left\{
\overline{z}:z\in W^n(R_\mu)
\right\},$
and hence $\nu\in W^n(R_\mu).$

We have therefore shown that $
\nu\in\overline{W^n(R_\mu)}.$ 
Since $\nu\neq0$ and the map $z\mapsto\mu+\frac{1}{z}$ 
is continuous in a neighborhood of $\nu$, it follows from Proposition \ref{prop:point-and-approximate-spectrum-enclosure} (a) that
$
\lambda
=
\mu+\frac{1}{\nu}
\in
\overline{W_{\mu}^n(\Ll)}.
$
If $\infty\in\sigma(\mathcal{L})$, then either $\mul\mathcal{L}\neq\{0\}$ or $\dom\Ll\neq X$. Then either $R_{\mu}$ is not surjective or $R_{\mu}$ is not injective which implies that $0\in\sigma(R_{\mu})$. Using the spectral inclusion for bounded operators we get $0\in \overline{W^n(R_{\mu})}$. By Proposition \ref{prop:point-and-approximate-spectrum-enclosure} (a), we obtain $\infty\in \overline{W_{\mu}^n(\Ll)}$. 
\end{pf}
\begin{theorem} 
Let $\Ll$ be a closed linear relation in a Hilbert space $X$ and $\mu\in\rho(\mathcal{L})\neq\emptyset$ and range representation \eqref{eq:ran_with_resolvents2}.  Then the following hold
\begin{enumerate}[(a)]
    \item
If $\mu\in\mathbb{R}$ then $\mathcal{L}$ is symmetric if and only if $R_\mu=R_\mu^*$. 
    \item If $\mathcal{L}$ is symmetric then 
$W_{\mu}^n(\Ll)\subseteq\mathbb{R}\cup\{\infty\}$.
\end{enumerate}

\end{theorem}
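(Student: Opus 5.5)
For part (a), the plan is to work directly with the definitions. Recall that $\Ll$ is symmetric if $\Ll\subseteq\Ll^*$, that is, $\langle y,x'\rangle=\langle x,y'\rangle$ for all $(x,y),(x',y')\in\Ll$. By Proposition~\ref{prop:relationfrompsres}, every element of $\Ll$ has the form $(R_\mu z, z+\mu R_\mu z)$ with $z\in X$. Take two such elements, generated by $z$ and $w$. Since $\mu$ is real, the terms $\mu\langle R_\mu z,R_\mu w\rangle$ appear on both sides and cancel, so the symmetry condition reduces to
\[
\langle z, R_\mu w\rangle=\langle R_\mu z, w\rangle\quad\text{for all } z,w\in X .
\]
This is exactly $R_\mu=R_\mu^*$. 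Every step is an equivalence, so both directions of (a) follow at once.

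For part (b), I would first show that $R_\mu$ is selfadjoint. If $\mu$ is real, this is part (a). The difficulty is that the statement allows $\mu\in\rho(\Ll)$ to be nonreal, and for nonreal $\mu$ the operator $R_\mu$ is not selfadjoint. Instead $R_\mu^*=R_{\overline\mu}$ when $\Ll$ is selfadjoint, and $R_\mu$ is then normal.

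For nonreal $\mu$ I would therefore argue from the symmetry of $\Ll$ directly. Fix $f$ and let $\lambda\in\sigma(\Ll_{\mu,f})\setminus\{\infty\}$. By Lemma~\ref{lem:spectrum-compressed-relation}, $\lambda=\mu+1/\nu$ with $R_{\mu,f}c=\nu c$ for some $c\neq0$. Put $z=F_fc$. Then $F_f^*R_\mu z=\nu c$, and the goal is to show that $\lambda$ is real. Symmetry of $\Ll$ applied to $(R_\mu z,(I+\mu R_\mu)z)$ gives that
\[
\langle z,R_\mu z\rangle+\mu\|R_\mu z\|^2
\]
is real. Compression only gives $\langle R_\mu z,z\rangle=\nu\|c\|^2$; it does not control $\|R_\mu z\|^2$, which can exceed $|\nu|^2\|c\|^2$. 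This is the main obstacle. The identity above then shows $\operatorname{Im}\nu\,\|c\|^2=\operatorname{Im}\mu\,\|R_\mu z\|^2$ up to sign, which need not force $\lambda$ to be real.

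I therefore expect (b) to be intended for real $\mu$, consistent with (a). The main step is to read the hypotheses of (b) as including $\mu\in\mathbb{R}$. Under that reading the argument is short. By (a), $R_\mu=R_\mu^*$, so every compression $R_{\mu,f}=F_f^*R_\mu F_f$ is a Hermitian $n\times n$ matrix and has only real eigenvalues. Hence $W^n(R_\mu)\subseteq\mathbb{R}$. Proposition~\ref{prop:point-and-approximate-spectrum-enclosure}(a) gives $W_\mu^n(\Ll)=\{\mu+1/\nu:\nu\in W^n(R_\mu)\}$, with $\nu=0$ corresponding to $\infty$. Since $\mu$ is real, every such point is real or $\infty$, so $W_\mu^n(\Ll)\subseteq\mathbb{R}\cup\{\infty\}$.

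If the authors do intend nonreal $\mu$, I would try to exploit $\rho(\Ll)\neq\emptyset$. I would check whether a real $\mu'\in\rho(\Ll)$ exists and relate $W_\mu^n$ to $W_{\mu'}^n$. I doubt they coincide, so I would state the real-$\mu$ assumption explicitly.
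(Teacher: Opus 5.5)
Your proof is correct and follows the paper's route. The paper derives (a) from $\Ll^*=\ker[(I+\mu R_\mu)^*,-R_\mu^*]$, which is the operator form of your inner-product computation. It proves (b) by invoking (a) to make every $\Ll_{\mu,f}$ symmetric, so it tacitly assumes $\mu\in\mathbb R$ exactly as you do.

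Your doubt about nonreal $\mu$ is well founded, because (b) fails there even for self-adjoint $\Ll$. Take $\Ll=\operatorname{gr}\diag(1,-1)$ in $\mathbb C^2$, $n=1$ and $\mu=i$. Then $R_\mu=\diag(\tfrac{1+i}{2},\tfrac{-1+i}{2})$, and $f=(1,1)^\top/\sqrt2$ gives $R_{\mu,f}=\tfrac i2$. Hence $\sigma(\Ll_{\mu,f})=\{\mu+1/R_{\mu,f}\}=\{-i\}\not\subseteq\mathbb R\cup\{\infty\}$.
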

\begin{pf}
For the proof of (a), recall from \cite{zbMATH07170295} that the adjoint of relation $\Ll$ in range representation~\eqref{eq:ran_with_resolvents2} is given by $\mathcal{L}^*=\ker[(I+\mu R_\mu)^*,-R_\mu^*]$. This, together with the definition of the symmetry $\mathcal{L}$, i.e.\ $\mathcal{L}\subseteq\mathcal{L}^*$, implies $(I+\mu R_\mu)^*R_\mu-R_\mu^*(I+\mu R_\mu)=0$. Conversely, if $R_{\mu}=R_{\mu}^*$ then $\Ll=\ran\left[\begin{smallmatrix}
    R_\mu \\ I+\mu R_\mu
\end{smallmatrix}\right] \subseteq \ker[(I+\mu R_\mu)^*,-R_\mu^*]=\Ll^*$ implying that $\mathcal{L}$ is symmetric by definition. For the proof of (b), we use  (a), which gives that the truncated linear relation $\Ll_{\mu,f}$ is symmetric and therefore $\sigma(\Ll_{\mu,f})\subseteq\mathbb{R}\cup\{\infty\}$ for all $f\in\prod_{i=1}^n\mathcal{S}_{X_i}$, which implies $W_{\mu}^n(\Ll)\subseteq\mathbb{R}\cup\{\infty\}$.
\end{pf}
Since the spectral enclosures in Proposition~\ref{prop:point-and-approximate-spectrum-enclosure} hold for all resolvent points $\mu$, the inclusion remains true for intersections of these sets for different resolvent points.
\begin{corollary}
    Let $\mathcal{L}$ be a linear relation with $\rho(\mathcal{L})\neq\emptyset$ and let $M\subseteq\rho(\mathcal{L})$. Then 
    \[
  \sigma_p(\mathcal{L})\subseteq\bigcap_{\mu\in M}
W_{\mu}^n(\Ll),\quad  \sigma_{\ap}(\mathcal{L})\subseteq\bigcap_{\mu\in M}
\overline{W_{\mu}^n(\Ll)} ,\quad  \sigma(\mathcal{L})\subseteq\bigcap_{\mu\in M}
\overline{W_{\mu}^n(\Ll)}.
    \]
\end{corollary}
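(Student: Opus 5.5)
The corollary follows by intersecting the individual enclosures over $\mu\in M$. The key observation is that for each fixed resolvent point $\mu\in\rho(\Ll)$, the set $W_{\mu}^n(\Ll)$ is defined from the range representation \eqref{eq:ran_with_resolvents2}. Proposition~\ref{prop:relationfrompsres} guarantees that this representation holds for every $\mu\in\rho(\Ll)$. Hence all earlier results stated for an arbitrary fixed $\mu$ apply to each element of $M$ separately.

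Concretely, fix $\mu\in M$. Proposition~\ref{prop:point-and-approximate-spectrum-enclosure}(b) gives $\sigma_p(\Ll)\subseteq W_{\mu}^n(\Ll)$. Proposition~\ref{prop:point-and-approximate-spectrum-enclosure}(c) gives $\sigma_{\ap}(\Ll)\subseteq \overline{W_{\mu}^n(\Ll)}$, including the point $\infty$ via the last part of that proof combined with part (a). Theorem~\ref{thm: spectrum-enclosure} gives $\sigma(\Ll)\subseteq\overline{W_{\mu}^n(\Ll)}$. Since the left-hand sides $\sigma_p(\Ll)$, $\sigma_{\ap}(\Ll)$ and $\sigma(\Ll)$ do not depend on $\mu$, each is contained in every member of the corresponding family. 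It is therefore contained in the intersection over $\mu\in M$, which is the elementary set-theoretic fact that $S\subseteq T_\mu$ for all $\mu$ implies $S\subseteq\bigcap_\mu T_\mu$.

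The only points needing care are bookkeeping rather than genuine obstacles. First, closures are taken inside the intersection, i.e.\ $\bigcap_\mu\overline{W_\mu^n(\Ll)}$ rather than $\overline{\bigcap_\mu W_\mu^n(\Ll)}$. This is exactly the set the pointwise inclusions yield, so no interchange of closure and intersection is required. Second, $\infty$ must be treated consistently as an element of the extended plane in each set. This is ensured by Proposition~\ref{prop:point-and-approximate-spectrum-enclosure}(a), which identifies $\infty\in W_\mu^n(\Ll)$ with $0\in W^n(R_\mu)$ uniformly in $\mu$. Finally, the case $M=\emptyset$ would give the whole extended plane, so the statement is trivially true there as well.
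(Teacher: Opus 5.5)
Your proposal is correct and matches the paper's argument. For each fixed $\mu\in M$ you apply Proposition~\ref{prop:point-and-approximate-spectrum-enclosure}(b),(c) and the resolvent enclosure $\sigma(\Ll)\subseteq\overline{W_{\mu}^n(\Ll)}$, then intersect over $\mu$, and you are right to cite that theorem explicitly for the third inclusion, which the paper's one-line justification glosses over by referring only to the proposition.
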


\section{Comparison of the block numerical ranges for an example}\label{example_comparison}
Consider
\[
 \Ll=\ran\begin{bmatrix}\mathcal B\\\mathcal A\end{bmatrix}
 \subseteq\C^6\times\C^6,
 \qquad
 \mathcal B=\diag(1,1,1,1,0,0),
\]
where
\begin{equation*}
\resizebox{0.98\linewidth}{!}{$
\mathcal A=
\begin{bmatrix}
 1.2933-0.1022\mathrm i&0.0809+0.5067\mathrm i&1.1684-0.0900\mathrm i&2.0032-0.1414\mathrm i&0&0\\
 -1.1260+2.2240\mathrm i&-3.0487-0.1532\mathrm i&0.6121+1.8473\mathrm i&-2.2940-0.2141\mathrm i&0&0\\
 0.3502+3.9918\mathrm i&-3.0177+2.1657\mathrm i&-0.3542+0.9676\mathrm i&-3.9118-0.1468\mathrm i&0&0\\
 -0.9006-1.4897\mathrm i&-0.1194-2.1177\mathrm i&-2.1550-0.0772\mathrm i&2.0095-0.6121\mathrm i&0&0\\
 -1.3626-0.4304\mathrm i&-0.4413+0.4890\mathrm i&-0.7631+0.2246\mathrm i&-1.2121-0.1787\mathrm i&0.0381-0.0291\mathrm i&3.5264+0.4325\mathrm i\\
 -1.0129+0.3074\mathrm i&-0.7246-0.6627\mathrm i&0.5736+0.5019\mathrm i&-0.1551+0.3735\mathrm i&0&2.2690+0.3074\mathrm i
\end{bmatrix}.$}
\end{equation*}
Relative to $\C^6=\C^4\oplus\C^2$, write
\[
 \mathcal A=\begin{bmatrix}A_{11}&0\\A_{21}&A_{22}\end{bmatrix}.
\]
The matrix $A_{22}$ is invertible. Eliminating its parameter shows that
\begin{equation*}
 \Ll=\operatorname{gr}A_{11}\oplus(\{0\}\times\C^2).
\end{equation*}
Consequently,
\[
 \dom\Ll=\C^4\oplus\{0\},\qquad
 \mul\Ll=\{0\}\oplus\C^2,
\]
and
\begin{equation*}
 W(\Ll)=W(A_{11}).
\end{equation*}
The finite eigenvalues are the eigenvalues of $A_{11}$:
\begin{align*}
 \lambda_1&\approx-2.8001+0.4000\mathrm i,&
 \lambda_2&\approx-1.4000-1.5000\mathrm i,\\
 \lambda_3&\approx 1.2000+1.8001\mathrm i,&
 \lambda_4&\approx 2.9000-0.6000\mathrm i.
\end{align*}
In addition, the regular pencil has two eigenvalues at $\infty$, counted
with algebraic multiplicity. Direct recomputation gives
\[
 \|\mathcal A^*\mathcal A-\mathcal A\mathcal A^*\|_2
 \approx85.15998,
 \qquad
 \mu(\mathcal A)\approx1.8898\times10^{-2},
\]
so $\mathcal A$ is invertible and strongly nonnormal.

For the quadratic range,
let
\[
 X_1=\C^4\oplus\{0\},\qquad X_2=\{0\}\oplus\C^2.
\]
The compressed pair has the lower-triangular form
\[
 \mathcal B_f=\begin{bmatrix}1&0\\0&0\end{bmatrix},\qquad
 \mathcal A_f=\begin{bmatrix}
 \langle A_{11}f_1,f_1\rangle&0\\ *&\langle A_{22}f_2,f_2\rangle
 \end{bmatrix}.
\]
For every admissible $f_2$, the only finite eigenvalue is
$\langle A_{11}f_1,f_1\rangle$. Such vectors exist, for example the first
canonical vector in the algebraic block. Hence the quadratic range does not improve
the ordinary range in this example:
\begin{equation*}
 W^2(\mathcal B,\mathcal A)=W(A_{11})\cup\{\infty\}.
\end{equation*}

For the cubic range, refine the dynamic block, not the algebraic block:
\[
 X_1=\Span\{e_1,e_2\},\qquad
 X_2=\Span\{e_3,e_4\},\qquad
 X_3=\Span\{e_5,e_6\}.
\]
The same triangular argument gives the exact identity
\begin{equation*}
 W^3(\mathcal B,\mathcal A)
 =W^2(A_{11})\cup\{\infty\}
 \subseteq W(A_{11})\cup\{\infty\}.
\end{equation*}
The inclusion here follows from the explicit structure, not from
Theorem~\ref{spectral_enclosures_pencils}, because
$0\in W^2(\mathcal B)$. Figure~\ref{fig:direct-ranges} displays the exact
ordinary/quadratic equality. The unit vectors entering the compressed matrices are generated using a Sobol low-discrepancy sequence \cite{JoeKuo2008, Sobol1967}, rather than independent pseudorandom samples, in order to obtain a more uniform coverage of the underlying parameter space.

The latter improves reproducibility
and coverage of the parameter sample, but it remains a point-cloud
approximation; dedicated boundary-seeking algorithms are preferable when an
accurate nonconvex boundary is required \cite{Jacob2025}.

\begin{figure}[pos=t]
 \centering
 \begin{subfigure}[t]{0.48\linewidth}
  \centering
  \includegraphics[width=\linewidth]{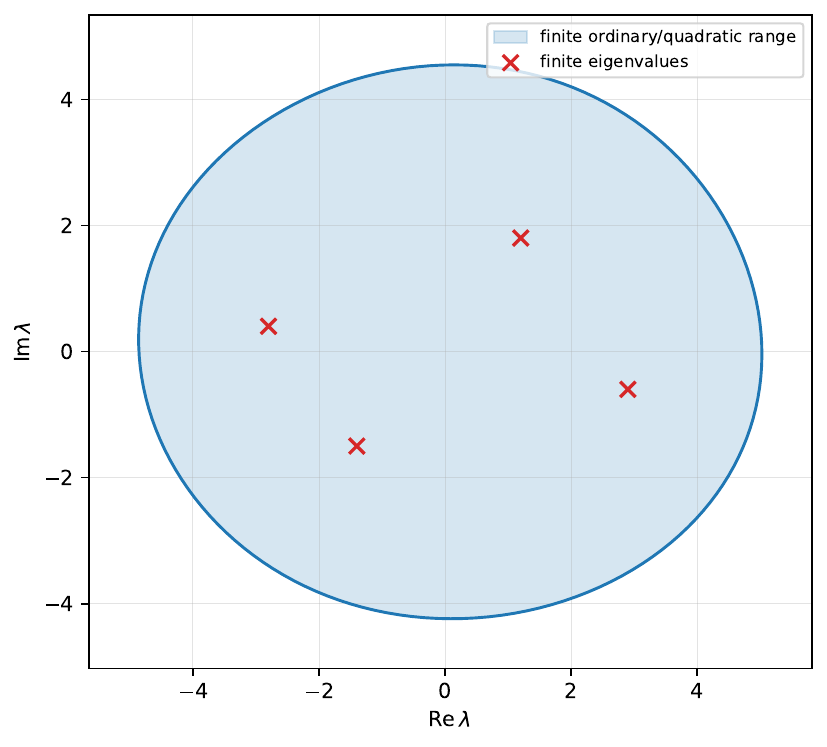}
  \caption{The exact equality
  $W(\Ll)=W^2(\mathcal B,\mathcal A)\setminus\{\infty\}=W(A_{11})$.}
 \end{subfigure}\hfill
 \begin{subfigure}[t]{0.48\linewidth}
  \centering
  \includegraphics[width=\linewidth]{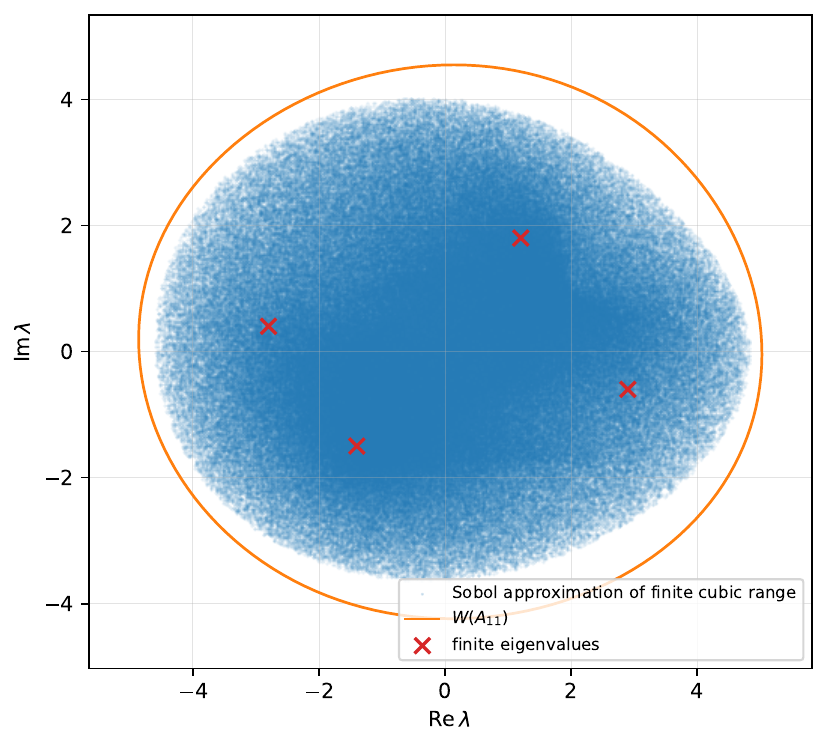}
  \caption{A Sobol approximation of
  $W^3(\mathcal B,\mathcal A)\setminus\{\infty\}=W^2(A_{11})$.}
 \end{subfigure}
 \caption{Direct numerical ranges. Crosses mark the four finite eigenvalues.
 The point $\infty$, present in both relation block ranges, is not displayed.}
 \label{fig:direct-ranges}
\end{figure}

The resolvent ranges admit an exact reduction for the aligned decomposition
$Y_1\oplus Y_2$. For $\mu\in\rho(\Ll)$,
\begin{equation*}
 R_\mu=(\Ll-\mu)^{-1}
 =\begin{bmatrix}(A_{11}-\mu I_4)^{-1}&0\\0&0\end{bmatrix}.
\end{equation*}
Therefore,
\begin{equation}
\label{eq:example-resolvent-range-exact}
 W_{\mu}^2(\Ll)
 =\left\{\mu+\frac1\nu:
 \nu\in W((A_{11}-\mu I_4)^{-1})\setminus\{0\}\right\}
 \cup\{\infty\}.
\end{equation}
Thus the finite boundary can be computed from the support-function boundary of
an ordinary matrix numerical range, followed by the reciprocal Möbius map; no
random sampling is needed.

We use
\[
 \mu_1=-5-4\mathrm i,\quad
 \mu_2=5+4\mathrm i,\quad
 \mu_3=-5+4\mathrm i,\quad
 \mu_4=5-4\mathrm i.
\]
The smallest singular values of $\mathcal A-\mu_j\mathcal B$ are,
respectively,
\[
 2.45122\times10^{-2},\quad
 2.52311\times10^{-2},\quad
 2.52214\times10^{-2},\quad
 2.48189\times10^{-2},
\]
which verifies that all four points lie in the resolvent. In each case,
$0\notin W((A_{11}-\mu_jI)^{-1})$. The computed distances of zero from
these four numerical ranges are approximately
$0.04620$, $0.04284$, $0.04425$, and $0.04437$, respectively, so the finite
part in \eqref{eq:example-resolvent-range-exact} is bounded. Figure
\ref{fig:resolvent-ranges} shows the four sets and their finite-plane
intersection. Using $4096$ support directions, the polygonal area decreases
from approximately $68.11$ for $W(A_{11})$ to $30.06$ for the intersection,
while all four finite eigenvalues remain enclosed. These area values are
numerical diagnostics, not exact invariants.

\begin{figure}[pos=t]
 \centering
 \begin{subfigure}[t]{0.48\linewidth}
  \centering
  \includegraphics[width=\linewidth]{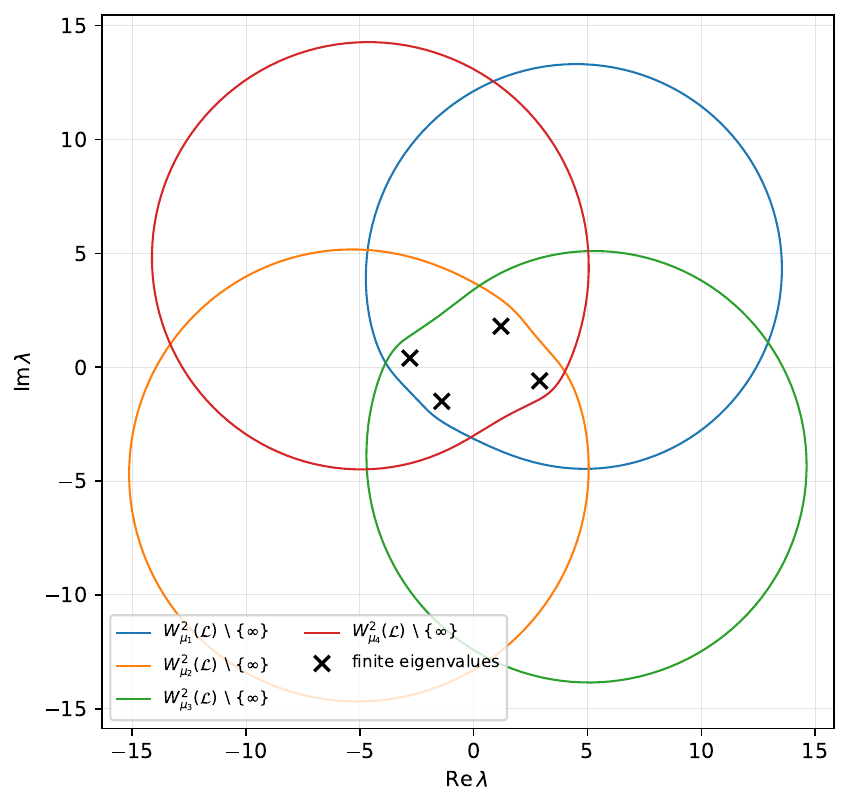}
  \caption{Finite boundaries of the four resolvent block ranges.}
 \end{subfigure}\hfill
 \begin{subfigure}[t]{0.48\linewidth}
  \centering
  \includegraphics[width=\linewidth]{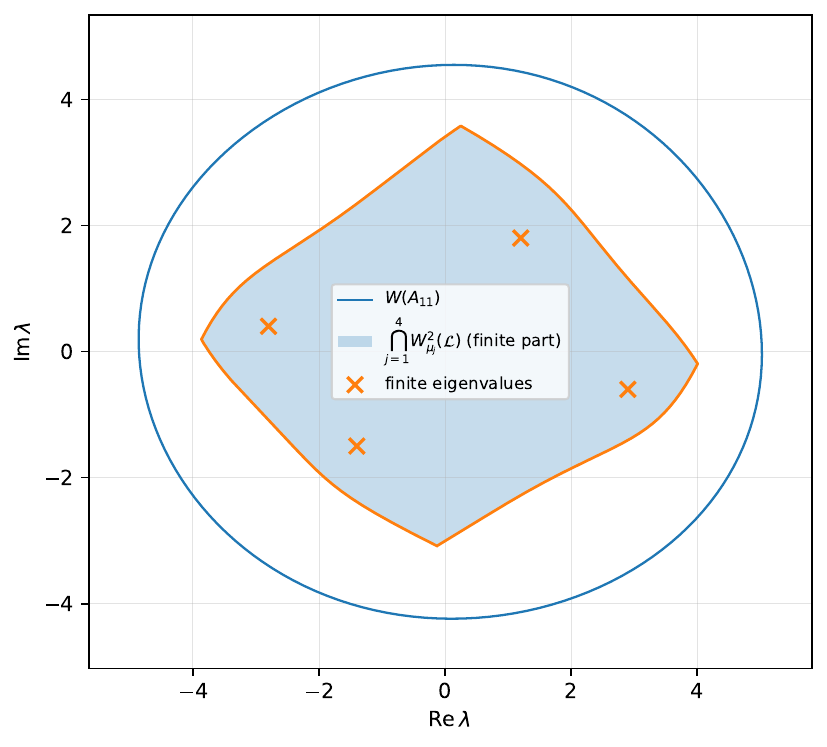}
  \caption{Their finite-plane intersection compared with $W(A_{11})$.}
 \end{subfigure}
 \caption{Resolvent-based enclosures for the aligned decomposition
 $\C^6=\C^4\oplus\C^2$. The common point $\infty$ is not shown.}
 \label{fig:resolvent-ranges}
\end{figure}
All numerical values and figures in this section are reproduced by the script
\path{numerical_experiments.py} supplied with the manuscript.

\section{Application to port-Hamiltonian differential-algebraic equations}
\label{sec:application-ph}

We conclude with an application of the block numerical range to a simple
class of port-Hamiltonian differential-algebraic equations see, e.g., \cite{zbMATH07365308}. Let $X=X_1\oplus X_2$ be a complex Hilbert space and consider
\begin{equation}
\label{eq:ph-dae}
    \frac{\mathrm d}{\mathrm dt}\mathcal Bx(t)
    =
    \mathcal A x(t),
    \qquad
    \mathcal A=\mathcal J-\mathcal R,
\end{equation}
where
\begin{equation}
\label{eq:ph-structure}
    \mathcal B=
    \begin{bmatrix}
        I&0\\
        0&0
    \end{bmatrix},
    \qquad
    \mathcal J=
    \begin{bmatrix}
        J_1&J_{12}\\
        -J_{12}^*&J_2
    \end{bmatrix},
    \qquad
    \mathcal R=
    \begin{bmatrix}
        R_1&0\\
        0&R_2
    \end{bmatrix}.
\end{equation}
We assume that all operators are bounded and
\begin{equation}
\label{eq:ph-assumptions}
    J_1^*=-J_1,
    \qquad
    J_2^*=-J_2,
    \qquad
    R_1=R_1^*\geq0,
    \qquad
    R_2=R_2^*\geq cI
\end{equation}
for some $c>0$. Thus, $\mathcal J^*=-\mathcal J$ and
$\mathcal R=\mathcal R^*\geq0$. In finite dimensions, $J_1$ and $J_2$
are skew-Hermitian matrices (and skew-symmetric matrices in the real
case).

Writing
\begin{equation}
\label{eq:A-blocks-ph}
    \mathcal A
    =
    \begin{bmatrix}
        A_{11}&A_{12}\\
        -A_{12}^*&A_{22}
    \end{bmatrix},
    \qquad
    A_{11}=J_1-R_1,
    \quad
    A_{12}=J_{12},
    \quad
    A_{22}=J_2-R_2,
\end{equation}
we have
$\operatorname{Re}\langle A_{11}x_1,x_1\rangle
    =
    -\langle R_1x_1,x_1\rangle
    \leq0$ and
\begin{equation}
\label{eq:A22-strict-diss}
\operatorname{Re}\langle A_{22}x_2,x_2\rangle
    =
    -\langle R_2x_2,x_2\rangle
    \leq-c\|x_2\|^2.
\end{equation}
Hence $A_{11}$ is dissipative, whereas $A_{22}$ is uniformly
dissipative. 
The natural Hamiltonian associated with \eqref{eq:ph-dae} is
\[
    \mathcal H(x)
    :=
    \frac12\langle \mathcal Bx,x\rangle
    =
    \frac12\|x_1\|^2.
\]
Along sufficiently regular solutions of \eqref{eq:ph-dae},
\begin{align*}
    \frac{\mathrm d}{\mathrm dt}\mathcal H(x(t))
    =
\operatorname{Re}\langle \mathcal A x(t),x(t)\rangle
    =
    -\langle R_1x_1(t),x_1(t)\rangle
    -\langle R_2x_2(t),x_2(t)\rangle
    \leq0.
\end{align*}
Thus \eqref{eq:ph-dae} has the standard dissipative
port-Hamiltonian structure.

We first record some consequences of the strict dissipation in the
algebraic variable.

\begin{proposition}
\label{prop:ph-reduction}
Let $\mathcal B$ and $\mathcal A$ be given by
\eqref{eq:ph-structure}--\eqref{eq:A-blocks-ph} and suppose that
\eqref{eq:ph-assumptions} holds. Then $A_{22}$ is boundedly
invertible and
\begin{equation}\label{rema}
    \|A_{22}^{-1}\|\leq\frac1c.
\end{equation}
The algebraic constraint in \eqref{eq:ph-dae} is therefore equivalent
to
\begin{equation}
\label{eq:ph-constraint}
    x_2=A_{22}^{-1}A_{12}^*x_1,
\end{equation}
and the differential component satisfies
\begin{equation}
\label{eq:reduced-ph}
    \dot x_1=Sx_1,
    \qquad
    S:=
    A_{11}
    +
    A_{12}A_{22}^{-1}A_{12}^*.
\end{equation}
Moreover, $S$ is dissipative. More precisely, if
\begin{equation}
\label{eq:alpha-ph}
    \alpha
    :=
    \sup_{\|x_1\|=1}
    \operatorname{Re}\langle A_{11}x_1,x_1\rangle
    =
    -\inf_{\|x_1\|=1}
    \langle R_1x_1,x_1\rangle
    \leq0,
\end{equation}
then
\begin{equation}
\label{eq:S-halfplane}
    \operatorname{Re}\langle Sx_1,x_1\rangle
    \leq
    \alpha\|x_1\|^2,
    \qquad x_1\in X_1.
\end{equation}
Consequently,
\[
    \sigma(S)
    \subseteq
    \bigl\{
        \lambda\in\mathbb C:
        \operatorname{Re}\lambda\leq\alpha
    \bigr\}.
\]
\end{proposition}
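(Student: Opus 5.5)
We argue directly from the structure \eqref{eq:ph-assumptions}, using only elementary Hilbert-space facts: a Lax--Milgram-type invertibility argument, a Schur complement, and a quadratic-form estimate.

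\textbf{Step 1: invertibility of $A_{22}$.} For $x_2\in X_2$, \eqref{eq:A22-strict-diss} and Cauchy--Schwarz give
\[
c\|x_2\|^2\le -\operatorname{Re}\langle A_{22}x_2,x_2\rangle\le \|A_{22}x_2\|\,\|x_2\|,
\]
hence $\|A_{22}x_2\|\ge c\|x_2\|$. So $A_{22}$ is injective with closed range. The adjoint $A_{22}^*=-J_2-R_2$ satisfies the same estimate, because $\operatorname{Re}\langle A_{22}^*x,x\rangle=\operatorname{Re}\langle A_{22}x,x\rangle$. Therefore $\ker A_{22}^*=\{0\}$ and the range of $A_{22}$ is dense, hence all of $X_2$. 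Bounded invertibility and \eqref{rema} follow from the lower bound.

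\textbf{Step 2: reduction.} The second row of \eqref{eq:ph-dae} is $0=-A_{12}^*x_1+A_{22}x_2$, because the second row of $\mathcal B$ vanishes. By Step 1 this is equivalent to \eqref{eq:ph-constraint}. Substituting into the first row $\dot x_1=A_{11}x_1+A_{12}x_2$ yields \eqref{eq:reduced-ph}.

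\textbf{Step 3: dissipativity.} This is the only step needing a small idea. For $x_1\in X_1$ put $y:=A_{22}^{-1}A_{12}^*x_1$. Then
\[
\langle A_{12}A_{22}^{-1}A_{12}^*x_1,x_1\rangle=\langle A_{12}y,x_1\rangle=\langle y,A_{12}^*x_1\rangle=\langle y,A_{22}y\rangle=\overline{\langle A_{22}y,y\rangle}.
\]
Its real part is therefore $\operatorname{Re}\langle A_{22}y,y\rangle\le -c\|y\|^2\le 0$. Hence
\[
\operatorname{Re}\langle Sx_1,x_1\rangle\le \operatorname{Re}\langle A_{11}x_1,x_1\rangle\le\alpha\|x_1\|^2,
\]
where the last inequality is the definition \eqref{eq:alpha-ph} applied to $x_1/\|x_1\|$. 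The identity for $\alpha$ in \eqref{eq:alpha-ph} holds because $\operatorname{Re}\langle J_1x,x\rangle=0$.

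\textbf{Step 4: spectral bound.} Let $\operatorname{Re}\lambda>\alpha$. Then
\[
\operatorname{Re}\langle(\lambda-S)x,x\rangle\ge(\operatorname{Re}\lambda-\alpha)\|x\|^2,
\]
so $\lambda-S$ is bounded below. The adjoint $S^*$ satisfies the same numerical bound, because $\operatorname{Re}\langle S^*x,x\rangle=\operatorname{Re}\langle Sx,x\rangle$. Hence $\bar\lambda-S^*$ is injective, the range of $\lambda-S$ is dense and closed, and $\lambda\in\rho(S)$. Equivalently, $\sigma(S)\subseteq\overline{W(S)}$ together with $\overline{W(S)}\subseteq\{\operatorname{Re}z\le\alpha\}$.
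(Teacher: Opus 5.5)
Your proof is correct and follows essentially the same route as the paper. The shared steps are the coercivity estimate $\|A_{22}x_2\|\ge c\|x_2\|$ combined with the same estimate for $A_{22}^*$, the row-by-row elimination giving the Schur complement $S$, and the substitution $y=A_{22}^{-1}A_{12}^*x_1$, which turns the correction term into $\overline{\langle A_{22}y,y\rangle}$; the only addition is that, besides the paper's use of $\sigma(S)\subseteq\overline{W(S)}$, you spell out that inclusion directly via bounded-belowness of $\lambda-S$ and injectivity of $\bar\lambda-S^*$.
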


\begin{pf}
By \eqref{eq:A22-strict-diss},
\[
    c\|x_2\|^2
    \leq
    -\operatorname{Re}\langle A_{22}x_2,x_2\rangle
    \leq
    \|A_{22}x_2\|\,\|x_2\|.
\]
Hence
\[
    \|A_{22}x_2\|\geq c\|x_2\|,
\]
so that $A_{22}$ is injective and has closed range. The same argument
applied to
\[
    A_{22}^*=-J_2-R_2
\]
gives $\ker A_{22}^*=\{0\}$. Therefore,
\[
    \overline{\ran A_{22}}
    =
    (\ker A_{22}^*)^\perp
    =
    X_2.
\]
Since $\ran A_{22}$ is also closed, $A_{22}$ is surjective and hence
boundedly invertible, with \eqref{rema}.

The second row of \eqref{eq:ph-dae} gives
$0=-A_{12}^*x_1+A_{22}x_2$,
which proves \eqref{eq:ph-constraint}. Substitution into the first row gives \eqref{eq:reduced-ph}.

It remains to prove the dissipativity estimate. Set $y:=A_{12}^*x_1$ and $z:=A_{22}^{-1}y$. Then
\begin{align*}
    \operatorname{Re}
    \langle A_{12}A_{22}^{-1}A_{12}^*x_1,x_1\rangle
    =
    \operatorname{Re}\langle A_{22}^{-1}y,y\rangle
    =
    \operatorname{Re}\langle z,A_{22}z\rangle
    =
    \operatorname{Re}\langle A_{22}z,z\rangle
    =
    -\langle R_2z,z\rangle
    \leq0.
\end{align*}
Together with \eqref{eq:alpha-ph}, this yields
\eqref{eq:S-halfplane}. Since $S$ is bounded, its spectrum is
contained in the closure of its numerical range, which proves the
last assertion.
\end{pf}

The preceding proposition also shows that the finite spectral
behavior of the differential-algebraic equation is governed by the
Schur complement $S$. Indeed,
\[
    \lambda\mathcal B-\mathcal A
    =
    \begin{bmatrix}
        \lambda I-A_{11}&-A_{12}\\
        A_{12}^*&-A_{22}
    \end{bmatrix},
\]
and, since $A_{22}$ is invertible,
\[
    \lambda\mathcal B-\mathcal A
    \text{ is invertible}
    \quad\Longleftrightarrow\quad
    \lambda I-S
    \text{ is invertible}.
\]
Thus the finite spectrum of the associated linear relation satisfies
\begin{equation*}
    \sigma(\mathcal L)\setminus\{\infty\}
    =
    \sigma(S)
    \subseteq
    \{
        \lambda\in\mathbb C:
        \operatorname{Re}\lambda\leq\alpha
    \}.
\end{equation*}
If $X_2\neq\{0\}$, then $\mathcal B$ is singular and, since $A_{22}$
is injective, $\mathcal A(\ker\mathcal B)\neq\{0\}$. Hence
$\infty\in\sigma_p(\mathcal L)$.

We now show that the quadratic numerical range reproduces precisely
the same dissipativity half-plane.

\begin{theorem}
\label{thm:ph-quadratic-range}
Let $\mathcal B$ and $\mathcal A$ satisfy
\eqref{eq:ph-structure}--\eqref{eq:ph-assumptions}, and let $\alpha$
be defined by \eqref{eq:alpha-ph}. Then
\begin{equation}
\label{eq:ph-W2-halfplane}
    W^2(\mathcal B,\mathcal A)\setminus\{\infty\}
    \subseteq
    \bigl\{
        \lambda\in\mathbb C:
        \operatorname{Re}\lambda\leq\alpha
    \bigr\}.
\end{equation}
More explicitly, for
$f_1\in\mathcal S_{X_1}$ and
$f_2\in\mathcal S_{X_2}$, set
\begin{equation}
\label{eq:ph-abd}
    a:=\langle A_{11}f_1,f_1\rangle,
    \qquad
    b:=\langle A_{12}f_2,f_1\rangle,
    \qquad
    d:=\langle A_{22}f_2,f_2\rangle.
\end{equation}
Then $d\neq0$ and the corresponding compressed relation has spectrum
\begin{equation}
\label{eq:compressed-spectrum-ph}
    \sigma(\mathcal L_f)
    =
    \left\{
        a+\frac{|b|^2}{d},
        \infty
    \right\}.
\end{equation}
In particular,
\begin{equation*}
    \operatorname{Re}
    \left(
        a+\frac{|b|^2}{d}
    \right)
    =
    -\langle R_1f_1,f_1\rangle
    -
    \frac{
        |b|^2\langle R_2f_2,f_2\rangle
    }{|d|^2}
    \leq
    \alpha.
\end{equation*}
\end{theorem}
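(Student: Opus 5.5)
The argument is a direct computation of the compressed $2\times 2$ pair for an arbitrary $f=(f_1,f_2)\in\mathcal S_{X_1}\times\mathcal S_{X_2}$. With $\mathcal B_f=F_f^*\mathcal BF_f$ and $\mathcal A_f=F_f^*\mathcal AF_f$, the block structure \eqref{eq:ph-structure}, \eqref{eq:A-blocks-ph} gives
\[
\mathcal B_f=\begin{bmatrix}1&0\\0&0\end{bmatrix},\qquad
\mathcal A_f=\begin{bmatrix}a&b\\-\overline{b}&d\end{bmatrix}.
\]
Here the $(2,1)$ entry is $\langle -A_{12}^*f_1,f_2\rangle=-\overline{\langle A_{12}f_2,f_1\rangle}=-\overline b$. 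By \eqref{eq:A22-strict-diss} we have $\operatorname{Re} d=-\langle R_2f_2,f_2\rangle\le -c<0$, so in particular $d\neq0$.

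Next we compute $\det(\mathcal A_f-\lambda\mathcal B_f)=(a-\lambda)d+|b|^2$. Since $d\neq0$, this is a nonzero polynomial of degree one in $\lambda$. Hence the pencil is regular, and by Proposition~\ref{prop:finite_spectrum} we get $\rho(\mathcal L_f)\neq\emptyset$. The unique finite zero of the determinant is $\lambda=a+|b|^2/d$. Because $\det\mathcal B_f=0$, Proposition~\ref{prop:finite_spectrum} also gives $\infty\in\sigma(\mathcal L_f)$. Together these establish \eqref{eq:compressed-spectrum-ph}, and every $f$ is admissible in Definition~\ref{Definition: Block numerical range}.

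For the real part, $\operatorname{Re}a=-\langle R_1f_1,f_1\rangle$ because $J_1$ is skew-adjoint. Writing $1/d=\overline d/|d|^2$ gives
\[
\operatorname{Re}\frac{|b|^2}{d}=\frac{|b|^2\operatorname{Re} d}{|d|^2}=-\frac{|b|^2\langle R_2f_2,f_2\rangle}{|d|^2}\le0.
\]
Hence $\operatorname{Re}\lambda\le-\langle R_1f_1,f_1\rangle\le\alpha$ by \eqref{eq:alpha-ph}, since $\|f_1\|=1$. Taking the union over all $f$, using Proposition on the determinant characterization (item (c) of the elementary properties), yields \eqref{eq:ph-W2-halfplane}.

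None of the steps is a real obstacle. The only points requiring care are getting the sign and conjugation of the $(2,1)$ entry right, so that the product of the off-diagonal entries is $-|b|^2$, and noting that $d\neq0$ is exactly what makes the compressed pencil regular.
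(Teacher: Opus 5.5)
Your proposal is correct and follows the paper's proof essentially step by step. You compute the same compressed pair $\mathcal B_f=\begin{bmatrix}1&0\\0&0\end{bmatrix}$, $\mathcal A_f=\begin{bmatrix}a&b\\-\overline b&d\end{bmatrix}$, deduce $d\neq0$ from $\operatorname{Re}d\le -c$, find the single finite root $a+|b|^2/d$ of the determinant together with $\infty$ from $\det\mathcal B_f=0$, and bound the real part via $\operatorname{Re}(1/d)=\operatorname{Re}d/|d|^2$.

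One small citation point: Proposition~\ref{prop:finite_spectrum} presupposes $\rho(\Ll_f)\neq\emptyset$, so it cannot be the source of the implication ``regular pencil $\Rightarrow$ $\rho(\Ll_f)\neq\emptyset$''. Justify this step directly instead: whenever $\det(\mathcal A_f-\lambda\mathcal B_f)\neq0$, one has $(\Ll_f-\lambda)^{-1}=\operatorname{gr}\bigl(\mathcal B_f(\mathcal A_f-\lambda\mathcal B_f)^{-1}\bigr)$, an everywhere-defined operator. The paper uses this fact tacitly as well.
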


\begin{pf}
For
$f=(f_1,f_2)\in
\mathcal S_{X_1}\times\mathcal S_{X_2}$,
the compressed matrices are
\[
    \mathcal B_f
    =
    \begin{bmatrix}
        1&0\\
        0&0
    \end{bmatrix},\quad 
    \mathcal A_f
    =
    \begin{bmatrix}
        a&b\\
        -\overline b&d
    \end{bmatrix},
\]
where $a,b,d$ are given by \eqref{eq:ph-abd}. By
\eqref{eq:A22-strict-diss},
$\operatorname{Re}d =
    -\langle R_2f_2,f_2\rangle
    \leq-c<0$, and hence $d\neq0$.

For a finite $\lambda$, the characteristic equation of the compressed
pencil is
\begin{equation*}
    0=\det(\lambda\mathcal B_f-\mathcal A_f)=
    \det
    \begin{bmatrix}
        \lambda-a&-b\\
        \overline b&-d
    \end{bmatrix}
    =-(\lambda-a)d+|b|^2.
\end{equation*}
Since $d\neq0$, the compressed pencil is regular and its unique finite
eigenvalue is $\lambda=a+d^{-1}|b|^2$.
Since $\mathcal B_f$ is singular, $\infty$ is the second spectral
point of the compressed relation. This proves
\eqref{eq:compressed-spectrum-ph}.

Finally, 
$\operatorname{Re}a
    =
    -\langle R_1f_1,f_1\rangle$ 
and
\[
    \operatorname{Re}\frac1d
    =
    \frac{\operatorname{Re}d}{|d|^2}
    =
    -
    \frac{\langle R_2f_2,f_2\rangle}{|d|^2}.
\]
Therefore,
\[
    \operatorname{Re}\lambda
    =
    -\langle R_1f_1,f_1\rangle
    -
    \frac{
        |b|^2\langle R_2f_2,f_2\rangle
    }{|d|^2}
    \leq
    \operatorname{Re}a
    \leq
    \alpha,
\]
which proves \eqref{eq:ph-W2-halfplane}.
\end{pf}

We next compare the quadratic range with the scalar range
$W^1(\mathcal B,\mathcal A)$.

\begin{theorem}
\label{thm:ph-scalar-range}
Under the assumptions of Theorem~\ref{thm:ph-quadratic-range},
\begin{equation}
\label{eq:ph-W1-halfplane}
    W^1(\mathcal B,\mathcal A)\setminus\{\infty\}
    \subseteq
    \bigl\{
        \lambda\in\mathbb C:
\operatorname{Re}\lambda\leq\alpha
    \bigr\}.
\end{equation}
Moreover,
\begin{equation}
\label{eq:ph-W2-W1}
    W^2(\mathcal B,\mathcal A)
    \subseteq
    W^1(\mathcal B,\mathcal A).
\end{equation}
Consequently,
\begin{equation*}
    W^2(\mathcal B,\mathcal A)\setminus\{\infty\}
    \subseteq
    W^1(\mathcal B,\mathcal A)\setminus\{\infty\}
    \subseteq
    \bigl\{
        \lambda\in\mathbb C:
        \operatorname{Re}\lambda\leq\alpha
    \bigr\}.
\end{equation*}
\end{theorem}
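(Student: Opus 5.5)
The proof has three steps: compute $W^1(\mathcal B,\mathcal A)$ explicitly, then show that every finite point of $W^2(\mathcal B,\mathcal A)$ is realized by some vector $g$ in the formula for $W^1$, and finally handle $\infty$.

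\emph{Step 1: the half-plane bound for $W^1$.} By Proposition~\ref{prop:scalar-block-range}, $W^1(\mathcal B,\mathcal A)\setminus\{\infty\}=w(\mathcal A,\mathcal B)$. So a finite point has the form $\lambda=\langle\mathcal Ag,g\rangle/\langle\mathcal Bg,g\rangle$ with $g=(g_1,g_2)$ and $\langle\mathcal Bg,g\rangle=\|g_1\|^2\neq0$. Since $\mathcal J$ is skew-adjoint,
\[
\operatorname{Re}\langle\mathcal Ag,g\rangle=-\langle R_1g_1,g_1\rangle-\langle R_2g_2,g_2\rangle\leq-\langle R_1g_1,g_1\rangle\leq\alpha\|g_1\|^2 .
\]
Dividing by $\|g_1\|^2>0$ gives \eqref{eq:ph-W1-halfplane}.

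\emph{Step 2: finite points of $W^2$ lie in $W^1$.} Take $f=(f_1,f_2)$. Theorem~\ref{thm:ph-quadratic-range} says the unique finite point of $\sigma(\mathcal L_f)$ is $\lambda=a+|b|^2/d$. I would realize $\lambda$ by the vector $g=(f_1,tf_2)$ with $t:=\overline b/d$. This choice comes from the kernel vector of $\lambda\mathcal B_f-\mathcal A_f$: its second row $\overline b z_1-dz_2=0$ forces $z=(1,\overline b/d)$. The computation is
\[
\langle\mathcal Ag,g\rangle=a+tb-\overline{t}\,\overline b+|t|^2d=a+\frac{|b|^2}{d}-\frac{|b|^2}{\overline d}+\frac{|b|^2}{\overline d}=\lambda,
\]
using $\langle A_{21}f_1,f_2\rangle=-\overline b$ and $|t|^2d=|b|^2/\overline d$. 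Since $\langle\mathcal Bg,g\rangle=\|f_1\|^2=1$, we get $\lambda\in w(\mathcal A,\mathcal B)$, hence $\lambda\in W^1(\mathcal B,\mathcal A)$.

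This identity is the only real computation in the proof. I expect the main obstacle to be the bookkeeping of conjugates and of the sign of the off-diagonal block $-A_{12}^*$, which I would double-check carefully.

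\emph{Step 3: the point $\infty$.} For $\infty\in W^2(\mathcal B,\mathcal A)$ I need $\infty\in W^1(\mathcal B,\mathcal A)$. By Definition~\ref{Definition: Block numerical range} with $n=1$, this requires $f\in\mathcal S_X$ with $\langle\mathcal Bf,f\rangle=0$ and $\rho(\mathcal L_f)\neq\emptyset$. I would take $f=(0,f_2)$ with $f_2\in\mathcal S_{X_2}$. Then $\langle\mathcal Bf,f\rangle=0$ and $\langle\mathcal Af,f\rangle=d\neq0$, so $\mathcal L_f=\{0\}\times\C$, whose resolvent set is $\C$. Thus $\infty\in W^1(\mathcal B,\mathcal A)$, which proves \eqref{eq:ph-W2-W1}. (If $X_2=\{0\}$, there is no $f_2\in\mathcal S_{X_2}$ and the decomposition $X=X_1\oplus X_2$ is not a genuine two-block one; this case is excluded by assumption.)

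Combining \eqref{eq:ph-W2-W1} with Step~1 gives the concluding chain of inclusions.
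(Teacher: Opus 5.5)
Your proposal is correct and follows essentially the paper's proof: the same half-plane estimate for $W^1$, the same test vector $x=F_fz$ built from the kernel vector $z$ of $\lambda\mathcal B_f-\mathcal A_f$, and the same purely multivalued scalar compression with $x_1=0$ to get $\infty\in W^1(\mathcal B,\mathcal A)$. The only difference is that you verify $\langle\mathcal A x,x\rangle=\lambda$ by expanding the blocks, whereas the paper pairs $\mathcal A_fz=\lambda\mathcal B_fz$ with $z$; your aside about $X_2=\{0\}$ is harmless, since then $\mathcal S_{X_2}=\emptyset$, $W^2(\mathcal B,\mathcal A)$ is empty, and the inclusion holds trivially.
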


\begin{pf}
Let $x=(x_1,x_2)\in X$ be a unit vector. Then
$\langle\mathcal Bx,x\rangle
    =
    \|x_1\|^2$. If $x_1\neq0$, the corresponding finite scalar compressed eigenvalue
is
$\lambda
    =
    \tfrac{\langle\mathcal Ax,x\rangle}{\|x_1\|^2}$.
Since $\mathcal J$ is skew-adjoint,
$\operatorname{Re}\langle\mathcal Jx,x\rangle=0$,
and hence
\begin{align*}
\operatorname{Re}\lambda
    =
    -
    \frac{
        \langle R_1x_1,x_1\rangle
        +
        \langle R_2x_2,x_2\rangle
    }{\|x_1\|^2}\leq
    -
    \frac{\langle R_1x_1,x_1\rangle}{\|x_1\|^2}
    \leq
    \alpha.
\end{align*}
If $x_1=0$, then
\[
    \langle\mathcal Bx,x\rangle=0,
    \qquad
    \operatorname{Re}\langle\mathcal Ax,x\rangle
    =
    -\langle R_2x_2,x_2\rangle
    <0.
\]
Thus the corresponding scalar compressed relation is purely
multivalued and contributes the point $\infty$, but no finite
spectral point. This proves \eqref{eq:ph-W1-halfplane}.

It remains to prove \eqref{eq:ph-W2-W1}. Let
$\lambda\in
    W^2(\mathcal B,\mathcal A)\setminus\{\infty\}$.
Then there exist unit vectors $f_1\in X_1$, $f_2\in X_2$ and a
nonzero vector $z=(z_1,z_2)^\top\in\mathbb C^2$ such that
\[
    \mathcal A_fz
    =
    \lambda\mathcal B_fz.
\]
Since $d\neq0$, the second row of this equation implies that
$z_1\neq0$. Set
$x=F_fz=z_1f_1+z_2f_2$.
Taking the inner product of the compressed eigenvalue equation with
$z$ gives
\[
    \langle\mathcal Ax,x\rangle
    =
    \lambda\langle\mathcal Bx,x\rangle
    =
    \lambda|z_1|^2.
\]
Since $z_1\neq0$, after normalizing $x$, this shows
\[
    \lambda=\frac{\langle\mathcal Ax,x\rangle}
         {\langle\mathcal Bx,x\rangle}\in
    W^1(\mathcal B,\mathcal A)\setminus\{\infty\}.
\]
Finally, $\infty$ belongs to both block numerical ranges because the
compressed matrix $\mathcal B_f$ is singular. Hence
\eqref{eq:ph-W2-W1} follows.
\end{pf}

\subsection*{Acknowledgments}
This research was funded by the German Research Foundation (DFG) under project number 567253746.












\bibliographystyle{cas-model2-names}

\bibliography{cas-refs}



\end{document}